\newtheorem{theorem}{Theorem}[section]
\newtheorem{proposition}[theorem]{Proposition}
\newtheorem{lemma}[theorem]{Lemma}
\newtheorem{corollary}[theorem]{Corollary}
\newtheorem{definition}[theorem]{Definition}
\newtheorem{thm}{Theorem}
\def\mcD{\mathcal{D}}
\def\mcE{\mathcal{E}}
\def\mcF{\mathcal{F}}
\def\msG{\mathscr{G}}
\def\USC{\mathcal{USC}}
\def\sn{\stackrel{n}{\sim}}
\def\sm{\stackrel{m}{\sim}}
\numberwithin{equation}{section}
\begin{document}
\title[Dirichlet forms on unconstrained Sierpinski carpets]{Dirichlet forms on unconstrained Sierpinski carpets}

\author{Shiping Cao}
\address{Department of Mathematics, University of Washington, Seattle, WA, 98195, U.S.A.}
\email{spcao@uw.edu}
\thanks{}

\author{Hua Qiu}
\address{Department of Mathematics, Nanjing University, Nanjing, 210093, P. R. China.}
\thanks{The research of Qiu was supported by the National Natural Science Foundation of China, grant 12071213, and the Natural Science Foundation of Jiangsu Province in China, grant BK20211142.}
\email{huaqiu@nju.edu.cn}

\subjclass[2010]{Primary 28A80, 31E05}

\date{}

\keywords{unconstrained Sierpinski carpets, Dirichlet forms, diffusions, self-similar sets}

\maketitle

\begin{abstract}
We construct symmetric self-similar Dirichlet forms on unconstrained Sierpinski carpets, which are natural extension of planar Sierpinski carpets by allowing the small cells to live off the $1/k$ grids. The intersection of two cells can be a line segment of irrational length, and the non-diagonal assumption is dropped in this recurrent setting.
\end{abstract}

\section{Introduction}\label{sec1}
In this paper, we study self-similar Dirichlet forms on unconstrained Sierpinski carpets ($\USC$ for short). See Figure \ref{fig1} for some examples.

\begin{figure}[htp]
	\includegraphics[width=4.9cm]{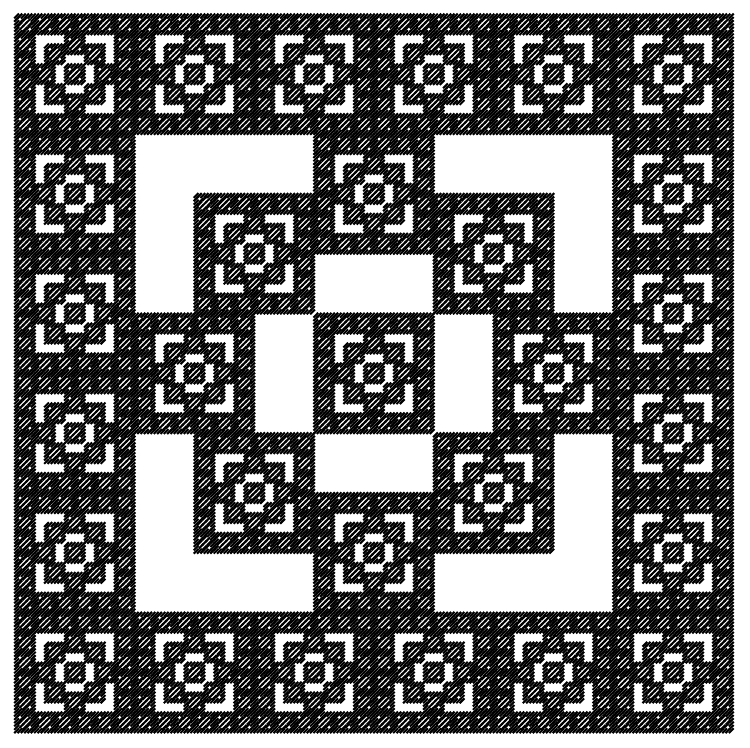}\hspace{1cm}
	\includegraphics[width=4.9cm]{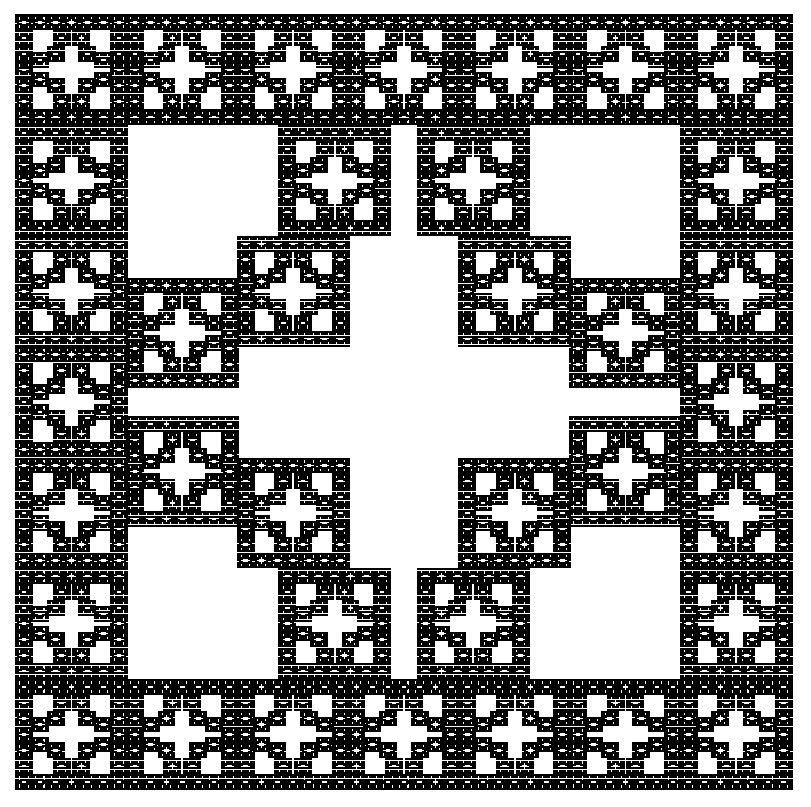}
	\caption{Unconstrained Sierpinski carpets ($\USC$).}
	\label{fig1}
\end{figure}

To see the meaning of the adjective ``\textit{unconstrained}'', let us first recall the definition of the classical Sierpinski carpets. By dividing the unit square into $k^2,k\geq 3$ identical small squares and deleting a few symmetrically (keeping all that border the boundary), we get an iterated function system (i.f.s. for short) consisting of similarities that map the unit square to a remaining small square. The attractor of the i.f.s. is a Sierpinski carpet, as long as the fractal is connected. See Figure \ref{fig2} for  the standard Sierpinski carpet, with $k=3$.

\begin{figure}[htp]
	\includegraphics[width=4.9cm]{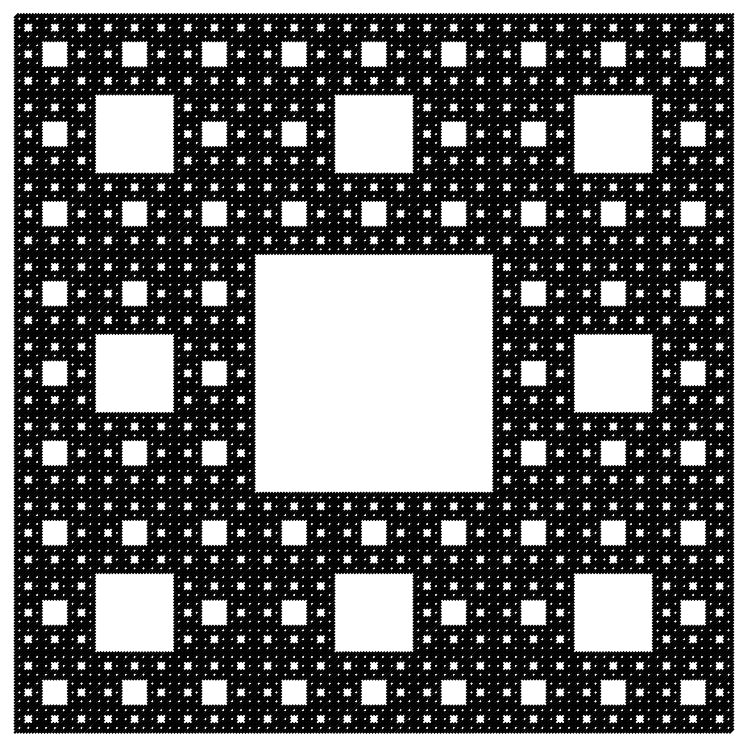}
	\caption{The standard Sierpinski carpet.}
	\label{fig2}
\end{figure}

Clearly, all the squares are constrained to lie on the $1/k$ grids in a classical Sierpinski carpet.  A larger family of fractals, which we call unconstrained Sierpinski carpets, are exactly those self-similar sets generated in a similar manner, with the constraint removed by allowing the $1/k$ sized squares to move around. See Definition \ref{def21} for precise details. We will construct and study  diffusions on these unconstrained Sierpinski carpets.\vspace{0.2cm}

The study of diffusion processes on fractals emerged as an independent research field in the  80's. Initial interest in such processes came from mathematical physicists working in the theory of disordered media \cite{AO,HBA,RTP}. On self-similar sets, the pioneering works are the constructions of  Brownian motions on the Sierpinski gasket \cite{BP,G,kus} originated by Goldstein,  Kusuoka, and Barlow and Perkins independently. The method features the study of a compatible sequence of graphs,  and is extended to post critically finite fractals \cite{ki1,ki2} by Kigami. See \cite{C,CHQES,CQ1,CQ2,FS,HMT,HN,Lindstrom,M1,M2,Pe,RT,Sabot,T} and books \cite{B,ki3,s} for the Dirichlet forms on such fractals.

The Sierpinski carpets are typical self-similar sets that are not finitely ramified. The construction of diffusions on Sierpinski carpets \cite{BB}, initiated by Barlow and Bass, is a milestone in analysis on fractals.

In the pioneering work \cite{BB}, the diffusion is defined to be a weak limit of renormalized Brownian motions on domains in $\mathbb{R}^2$, and the method was later extended to generalized Sierpinski carpets (carpets in $\mathbb{R}^d,d\geq 3$) \cite{BB3}. An alternative approach, with the language of Dirichlet forms \cite{FOT}, was given by Kusuoka and Zhou \cite{KZ}. It remained a difficult question whether the diffusions resulting from the two methods were the same until in 2010, Barlow, Bass, Kumagai and Teplyaev provided an affirmative answer \cite{BBKT}. See \cite{BB1,BB4,BB2,BB3} for more properties of the diffusions on Sierpinski carpets. Also see \cite{GY} for  recent work on the standard Sierpinski carpet based on the resistance estimate \cite{BB4} and $\Gamma$-convergence \cite{D,Mosco}.

More or less, the constructions depend on delicate structure of the Sierpinski carpets. In \cite{BB}, the non-diagonal condition is required since Brownian motion on $\mathbb{R}^2$ does not hit points. Also, the corner moves and slide moves \cite{BB,BB3} depend on the exact way that cells intersect.  In \cite{BB2}, the resistance estimate used transformations among different graphs, which are essentially generated from the local structure of the fractal. A more general framework is provided in \cite{KZ} with several kinds of Poincare constants involved. However, some extra conditions ``\textbf{(GB)}'', ``\textbf{(LS)}'' and the ``Knight moves'' method were used to complete the story.\vspace{0.2cm}

Although it may not be possible to define a good diffusion on an arbitrary fractal, it is still one of the central question in this field to see how far we can loosen the geometric restrictions \cite{B}. The goal in this paper is to extend the constructions to more general unconstrained Sierpinski carpets. In particular, we break the conditions \textbf{(GB)} and \textbf{(LS)} of \cite{KZ}. Since we are dealing with the recurrent case, the non-diagonal condition \cite{BB,BB3,BBKT} is also dropped.

\begin{thm}\label{thm1}
Let $K$ be a $\USC$ and $\mu$ be the normalized Hausdorff measure on $K$. There is a strongly local, regular, irreducible, $D_4$-symmetric, self-similar Dirichlet form $(\mcE,\mcF)$ on $L^2(K,\mu)$.
\end{thm}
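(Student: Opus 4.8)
The plan is to follow the philosophy of Kusuoka--Zhou \cite{KZ} and Barlow--Bass--Kumagai--Teplyaev \cite{BBKT}, but to run it entirely at the level of Dirichlet forms and effective resistances, so that neither the grid constraint nor the non-diagonal condition is needed. Write $\{F_i\}_{i=1}^{N}$ for the i.f.s.\ generating $K$ (common contraction ratio $r=1/k$), set $K_w=F_w(K)$ for a finite word $w$, let $V_n$ be the natural level-$n$ vertex set, and let $G_n$ be the graph that records which level-$n$ cells meet along a boundary segment. Equip $G_n$ with unit conductances and let $\mcE_n$ be the associated discrete form, which by construction is nearly multiplicative, $\mcE_{m+n}(u)\asymp\sum_{|w|=n}\mcE_m(u\circ F_w)$. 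The technical heart is a \emph{uniform resistance estimate}: there exists $\rho>1$ such that the effective resistance $R_n$ across the level-$n$ cell complex (between two opposite faces) satisfies $c\,\rho^{n}\le R_n\le C\,\rho^{n}$ with $c,C$ independent of $n$. I would prove the sub/super-multiplicative pair $R_{m+n}\le C\,R_mR_n$ and $R_{m+n}\ge c\,R_mR_n$ by cutting and shorting together with flow-spreading (``knight move''-type) estimates carried out directly on the coarse $\USC$ combinatorics, and then extract $\rho=\lim_n R_n^{1/n}$ from a Fekete-type lemma; planarity is what forces $\rho>1$, placing us in the recurrent regime.

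Granting the resistance estimate, the next step is the two analytic inequalities that drive the limiting procedure: a uniform Poincar\'e inequality $\|u-\bar u_{K_w}\|_{L^2(K_w,\mu)}^2\le C\,\rho^{|w|}\mu(K_w)\,\mcE_n(u\circ F_w)$ on each cell, and --- because $\rho>1$ --- a Morrey--Sobolev estimate showing every finite-energy function is H\"older continuous, so that the natural domain embeds compactly into $L^2(K,\mu)$. Both follow from the resistance bounds by chaining over the cell decomposition of $K$; uniformity of the constants is exactly where the $\USC$ geometry is used (cells of comparable size, bounded overlap, uniform ``border'' structure coming from the boundary-keeping rule). Now rescale $\widetilde{\mcE}_n:=\rho^{\,n}\mcE_n$, regarded on $L^2(K,\mu)$ via the conditional expectations onto $\sigma(V_n)$: the family $\{\widetilde{\mcE}_n\}$ is then uniformly comparable, and along a subsequence it $\Gamma$-converges (equivalently Mosco-converges) \cite{D,Mosco} to a nontrivial closed form which is regular (density of the domain from the compact embedding and Poincar\'e), strongly local (locality is stable under $\Gamma$-limits of local forms) and irreducible (connectedness of $K$).

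Self-similarity is then forced by a renormalization fixed-point argument. Let $\mathcal{K}$ be the set of strongly local, $\mathcal{G}$-symmetric Dirichlet forms $\mathcal{A}$ on the domain just constructed with $c_1\widetilde{\mcE}_1\le\mathcal{A}\le c_2\widetilde{\mcE}_1$ for fixed $c_1,c_2$, normalized so that the effective resistance across $K$ equals $1$; here $\mathcal{G}$ is the group of geometric symmetries of $K$. The renormalization operator $\Lambda(\mathcal{A})(u)=\rho^{-1}\sum_{i=1}^{N}\mathcal{A}(u\circ F_i)$ maps $\mathcal{K}$ into $\mathcal{K}$ --- this is precisely what the two-sided resistance estimate guarantees --- while $\mathcal{K}$ is convex and compact in the Mosco topology and $\Lambda$ is continuous, so a Schauder--Tychonoff (or Markov--Kakutani) argument produces a fixed point $\mcE\in\mathcal{K}$, i.e.\ $\mcE(u)=\rho^{-1}\sum_{i=1}^{N}\mcE(u\circ F_i)$. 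Since $\mcE$ lies in $\mathcal{K}$, the resulting $(\mcE,\mcF)$ is automatically regular, strongly local, irreducible, symmetric and has $\mcF$ compactly embedded in $L^2(K,\mu)$. (Alternatively one may run the Kusuoka--Zhou eigenform construction directly on the $\mcE_n$ and pass to the limit.)

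For uniqueness, let $(\mcE^{(1)},\mcF^{(1)})$ and $(\mcE^{(2)},\mcF^{(2)})$ both satisfy the conclusion. The scaling factor $\rho$ is intrinsic, since any such form obeys the same resistance asymptotics by the Step-one sub/super-multiplicativity, so $\mcF^{(1)}=\mcF^{(2)}=:\mcF$ and $\mcE^{(1)}\asymp\mcE^{(2)}$. Put $a=\sup\{t\ge 0:\ \mcE^{(1)}\ge t\,\mcE^{(2)}\text{ on }\mcF\}\in(0,\infty)$ and $\mcE^{(3)}:=\mcE^{(1)}-a\,\mcE^{(2)}$, which is a non-negative, strongly local, $\mathcal{G}$-symmetric, self-similar form with the same scaling. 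The crux --- and the step I expect to be the main obstacle --- is to show that $\mcE^{(3)}$ again satisfies a resistance \emph{lower} bound unless $\mcE^{(3)}\equiv 0$; I would do this as in \cite{BBKT}, via an elliptic Harnack inequality for $\mcE^{(3)}$ deduced from the heat-kernel/resistance estimates available in this recurrent setting. Granting it, $\mcE^{(3)}\ge a'\mcE^{(2)}$ for some $a'>0$, contradicting the maximality of $a$; hence $\mcE^{(3)}\equiv 0$ and $\mcE^{(1)}=a\,\mcE^{(2)}$. Besides this, the other genuine difficulty is already visible in Step one: establishing the uniform resistance estimate when the intersections of cells may be segments of irrational length and no non-diagonal assumption is at hand forces the flow-spreading arguments to be carried out by hand from the coarse $\USC$ combinatorics rather than from any algebraic symmetry of the underlying graphs.
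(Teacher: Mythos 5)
Your outline reproduces the general philosophy of the paper (Kusuoka--Zhou discrete forms plus a $\Gamma$/Mosco-limit for existence, a BBKT-style perturbation for uniqueness), but at the two points where the real work lies it either defers to a method that is unavailable here or leaves the key step unproved. For existence, everything hinges on your ``uniform resistance estimate'' and the resulting uniform comparability of the rescaled forms $\widetilde{\mcE}_n$; in the paper's language this is the condition \textbf{(B)}, $\sigma_n\leq C\cdot N^nR_n$, comparing the Poincar\'e constant with the resistance between a cell and the complement of its neighbourhood. You propose to get it ``by cutting and shorting together with flow-spreading (knight-move-type) estimates carried out directly on the coarse $\USC$ combinatorics,'' but this is precisely what fails for $\USC$: the corner/slide/knight moves of Barlow--Bass rely on cells meeting along grid-aligned edges of rational proportion, whereas here the neighbourhood patterns $\mathcal N_w$ come in infinitely many types and two cells can overlap along a segment of irrational length, so one cannot build the needed test functions (or flows) by reflection/symmetry alone. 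The paper replaces this by a genuinely different mechanism: a building-brick extension algorithm (Section \ref{sec4}) that glues scaled copies of the minimizers $h_n$ to produce functions with prescribed linear boundary data and energy $\lesssim N^n\sigma_n^{-1}$, combined with a strongly-recurrent ``maximum point'' chaining argument (Lemma \ref{lemma48}, using Lemma \ref{lemma35} and $\lambda_n\lesssim\sigma_n$) to bound $R_n(W_{n,2},W_{n,4})$ from below by $c\,N^{-n}\sigma_n$. Without some substitute for this, your sub/supermultiplicativity, the Poincar\'e/H\"older estimates, the invariance of your set $\mathcal K$ under the renormalization map $\Lambda$, and hence the fixed-point argument, all remain unsupported. (Incidentally, the paper does not use a Schauder/Markov--Kakutani fixed point at all: self-similarity is obtained by taking Ces\`aro means of the pulled-back forms $\tilde\mcE_m(f)=\sum_{|w|=m}r^{-m}\bar\mcE(f\circ F_w)$ along a diagonal subsequence, which also makes the Markov property automatic; note too that your normalization ``effective resistance $=1$'' destroys convexity of $\mathcal K$, so the fixed-point route would need reworking.)

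For uniqueness there is a second gap of the same kind. You take the critical difference $\mcE^{(3)}=\mcE^{(1)}-a\,\mcE^{(2)}$ and say the crux is an elliptic Harnack inequality for $\mcE^{(3)}$; but at the critical value $a$ this form may be degenerate and there is no a priori reason it satisfies the hypotheses needed to run Harnack, which is exactly why BBKT (and the paper, Proposition \ref{prop88}) work with the slack $(1+\eta)\mcE'-\inf(\mcE'|\mcE)\,\mcE$ and let $\eta\to0$ against a \emph{uniform} comparability constant. That uniform constant is itself nontrivial output: the paper first shows any admissible form can be normalized to satisfy \textbf{(C1)}, \textbf{(C2)} (Lemmas \ref{lemma82}, \ref{lemma83}), then uses the trace theorem of Section \ref{sec6} to prove a resistance estimate with constants depending only on $k$ (Theorem \ref{thm72}), derives two-sided sub-Gaussian heat kernel bounds, and invokes the Besov characterization $\mcF=B^{d_W/2}_{2,\infty}(K)$ of Grigor'yan--Hu--Lau to conclude that all candidate forms share the same walk dimension, the same domain, and are comparable with a constant depending only on $K$ (Corollary \ref{coro87}). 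Your assertion that ``the scaling factor is intrinsic by sub/supermultiplicativity, so $\mcF^{(1)}=\mcF^{(2)}$'' skips exactly this chain, and the trace theorem that powers it is absent from your proposal. So while the architecture you describe is reasonable, the proof as proposed has genuine gaps at the decisive steps, and the paper's actual contributions (the extension algorithm, the recurrence-based resistance lower bound, and the uniform trace theorem) are what fill them.
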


See Definition \ref{def51} for the exact meaning of ``self-similar'', and also Theorem \ref{thm52} for a more elaborate statement.

Our approach is purely analytic. We are inspired by \cite{KZ}, and adopt a few results and definitions there, including the important Poincare constants. The main contributions that we make in this paper are in two aspects. First, we take advantage of strong recurrence of the electrical networks approximating the fractals, which implies that any function with small energy has small modulus of continuity, to show a side-to-side resistance estimate using the Poincare constants. In particular, we do not need the probabilistic argument of ``Knight moves'' and ``corner moves'' of Barlow and Bass \cite{BB}. Second, we use the side-to-side resistance estimate to glue functions with good boundary values together to construct bump functions with small energies. To fulfill this, we use the idea of a trace theorem by Hino and Kumagai \cite{HKtrace} to overcome the difficulty from the worse geometry of $\USC$. The existence of such bump functions will be formulated into condition \textbf{(B)} (see Section \ref{sec3}), which is the key ingredient of the construction of Dirichlet forms according to \cite{KZ}.\vspace{0.2cm}

Finally, we also point out that the unconstrained Sierpinski carpets can be viewed as moving fractals since cells are allowed to move around. In a forthcoming paper \cite{CQ4}, we will prove that the Dirichlet forms constructed in Theorem \ref{thm1} will vary continuously in a $\Gamma$-convergence sense, and the generated diffusion processes, viewed as processes in $\mathbb{R}^2$, will converge in distribution.\vspace{.2cm}

At the end, we briefly introduce the structure of the paper.

First, as the preliminary part, we introduce the precise definition and geometric properties of $\USC$ in Section \ref{sec2}.

In Section \ref{sec3}, first we introduce the Poincare constants $R_n,\lambda_n$ and $\sigma_n,n\geq 1$, from \cite{KZ}. For the convenience of readers, we slightly rewrite in Appendix \ref{AppendixA}  the proofs of several useful comparisons among these constants as illustrated in \cite{KZ}. Next, under the assumption of condition \textbf{(B)}, we provide a shorter proof of the existence of a limit form of discrete forms, based on $\Gamma$-convergence.

Section \ref{sec4} is the most important part of the paper. It contains three subsections. We postpone the proof of the side-to-side resistance estimate to Subsection \ref{sec43}, where we take advantage of strongly recurrence. In Subsections \ref{sec41} and \ref{sec42}, using the side-to-side resistance estimates, we construct functions that have small energies and good boundary values. Here ``good boundary values'' means that the functions behave like linear functions on $\mathbb{R}^2$ near the boundary, so we can easily glue these functions together to get bump functions, which will result in condition \textbf{(B)}.

Finally, we construct the self-similar Dirichlet form in Section \ref{sec5}.\\

Throughout this paper, we use $C$ and $C_1,C_2,\cdots$ to denote constants that are positive and finite. We will simply say that there is a constant $C$ or there is $C>0$. From time to time, we will write $a\asymp b$ for two functions (forms, and sequences) $a$ and $b$, if there is a constant $C>0$ so that $C^{-1}\cdot b\leq a\leq C\cdot b$. We will use the notation $a\wedge b=\min\{a,b\}$ and $a\vee b= \max\{a,b\}$.

\section{Geometry of unconstrained Sierpinski carpets}\label{sec2}
In this section, we introduce the definition of $\USC$ and prove some geometric properties. Similar as the classical Sierpinski carpets, the $\USC$s are self-similar sets generated by replacing the initial square with smaller squares and iterating.

In this paper, we consider fractals in $\mathbb{R}^2$. We will simply write $x\in \mathbb{R}^2$ for a point in $\mathbb{R}^2$, and from time to time we write $x=(x_1,x_2)$ to specify the coordinates of $x$. For two points $x,y\in \mathbb{R}^2$, we will always write
\[\overline{x,y}=\big\{(1-t)x+ty:0\leq t\leq 1\big\},\]
for the line segment connecting $x,y\in \mathbb{R}^2$. The metric $d$, throughout this paper, will always be the Euclidean metric on $\mathbb{R}^2$. In addition, we will write
\[d(A,B)=\inf_{x\in A,y\in B}d(x,y),\]
for $A,B\subset \mathbb{R}^2$, which is always positive if $A,B$ are disjoint compact sets. We also write $d(x,A)=d(\{x\},A)$ for short.

Let $\square$ be a unit square in $\mathbb{R}^2$. We let
\[q_1=(0,0),\quad q_2=(1,0),\quad q_3=(1,1),\quad q_4=(0,1)\]
be the four vertices of $\square$.
For convenience, we denote the group of self-isometries on $\square$ by
\[\msG=\big\{\Gamma_v,\Gamma_h,\Gamma_{d_1},\Gamma_{d_2},id,\Gamma_{r_1},\Gamma_{r_2},\Gamma_{r_3}\big\},\]
where $\Gamma_v,\Gamma_h,\Gamma_{d_1},\Gamma_{d_2}$ are \textit{reflections} ($v$ for vertical, $h$ for horizontal, $d_1,d_2$ for two diagonals),
\begin{equation}\label{eqn21}
\begin{aligned}
	&\Gamma_v(x_1,x_2)=(x_1,1-x_2),\qquad \Gamma_h(x_1,x_2)=(1-x_1,x_2),\\
	&\Gamma_{d_1}(x_1,x_2)=(x_2,x_1),\qquad\Gamma_{d_2}(x_1,x_2)=(1-x_2,1-x_1),
\end{aligned}
\end{equation}
 for $x=(x_1,x_2)\in \square$, $id$ is the identity mapping, and $\Gamma_{r_1},\Gamma_{r_2},\Gamma_{r_3}$ are \textit{rotations},
\begin{equation}\label{eqn22}
	\Gamma_{r_1}(x_1,x_2)=(1-x_2,x_1),\quad \Gamma_{r_2}=(\Gamma_{r_1})^2,\quad \Gamma_{r_3}=(\Gamma_{r_1})^3,
\end{equation}
around the center of $\square$ counterclockwisely with angle $\frac{j\pi}{2}$, $j=1,2,3$.
In this paper, we will focus on  structures with $\msG$-symmetry.

\begin{definition}[Unconstrained Sierpinski carpets]\label{def21}\quad
	
Let $k\geq 3$ and $4(k-1)\leq N\leq k^2-1$. Let $\{\Psi_i\}_{1\leq i\leq N}$ be a finite set of similarities with the form $\Psi_i(x)=\frac x k+ c_i$, $c_i\in\mathbb{R}^2$. Assume the following holds:

\noindent\emph{(Non-overlapping).}  $\Psi_i(\square)\cap \Psi_j(\square)$ is either a line segment, or a point, or empty, for $1\leq i,j\leq N$ with $i\neq j$.

\noindent\emph{(Connectivity). } $\bigcup_{i=1}^N \Psi_i(\square)$ is connected.

\noindent\emph{(Symmetry).} $\Gamma\big(\bigcup_{i=1}^N \Psi_i(\square)\big)=\bigcup_{i=1}^N \Psi_i(\square)$ for any $\Gamma\in \msG$.

\noindent\emph{(Boundary included).} $\overline{q_1,q_2}\subset\bigcup_{i=1}^N \Psi_i(\square)\subset\square$.

Then, call the unique compact subset $K\subset \square$ such that
\[K=\bigcup_{i=1}^N \Psi_iK\]
an \emph{unconstrained Sierpinski carpet} ($\USC$).
\end{definition}

\noindent\textbf{Remark}. To reduce the number of brackets, we write $\Psi_iK$ instead of $\Psi_i(K)$, following the notations of \cite{s}. Similarly, we will also use the notations like $\Psi_wL_i$.\vspace{0.2cm}

Here the condition $k\geq 3$ is to avoid trivial set by the symmetry condition. The condition $N\geq 4(k-1)$ is a requirement of the boundary included condition, and the condition $N\leq k^2-1$ ensures that  we are dealing with a non-trivial planar self-similar set. Note that when $k=3$, $N=8$, $K$ is the standard Sierpinski carpet. See Figure \ref{figure1} for more  examples of $\USC$.

\begin{figure}[htp]
	\includegraphics[width=4.9cm]{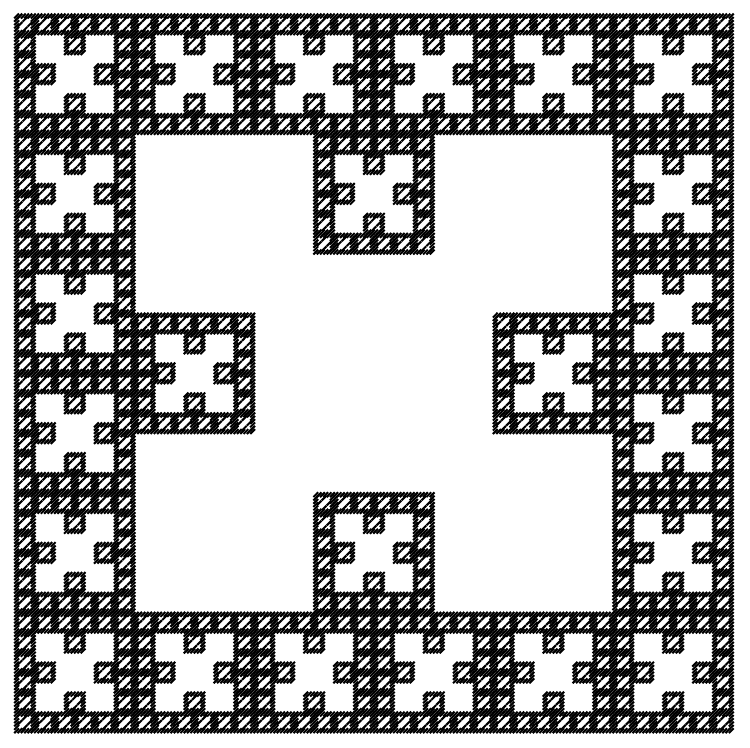}\hspace{0.2cm}
	\includegraphics[width=4.9cm]{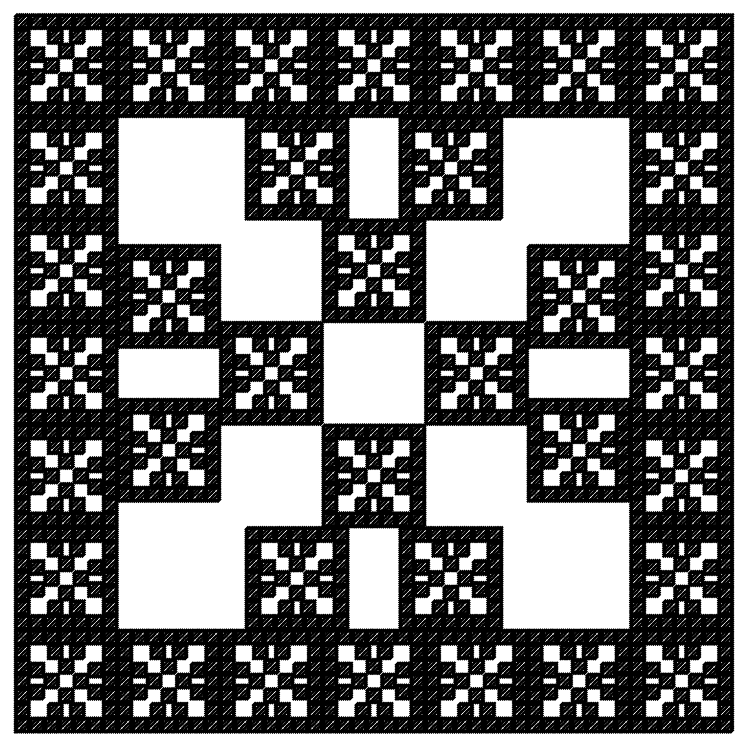}\hspace{0.2cm}
	\includegraphics[width=4.9cm]{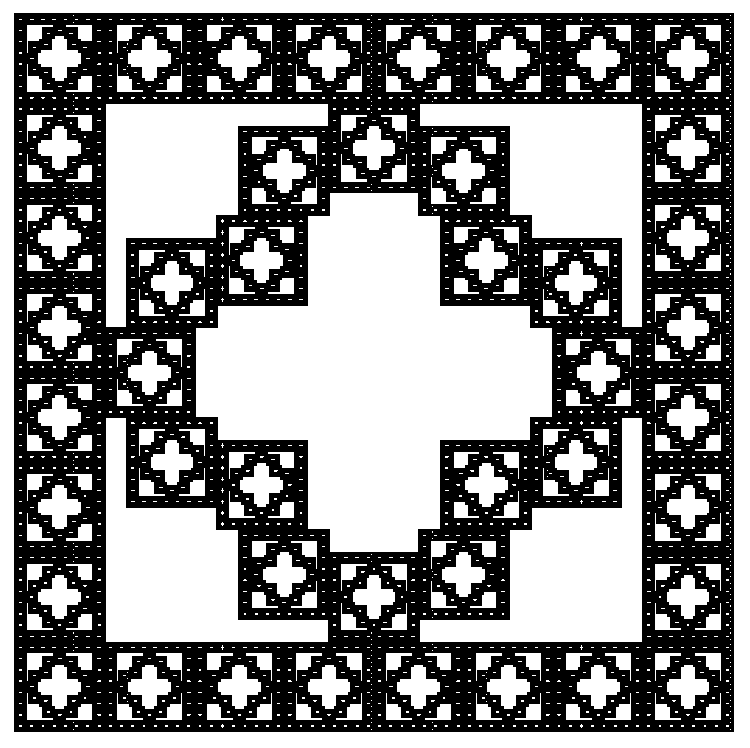}
	\caption{More unconstrained Sierpinski carpets ($\USC$).}
	\label{figure1}
\end{figure}

 Below we highlight some basic settings.\vspace{0.2cm}

\textbf{Basic settings and notations.}

 (1). Throughout the paper, $k,N$ are fixed numbers.

(2). The squares $\Psi_i(\square)$ along  the boundary of $\square$ are especially useful, and we number them in the following way. For $0\leq j\leq 3$ and $1\leq i\leq k-1$, we stipulate that
\begin{equation}\label{eqn23}
	\Psi_{(k-1)j+i}(x)=\frac{1}{k}(x-q_{j+1})+q_{j+1}+\frac{i-1}{k}(q_{j+2}-q_{j+1})
\end{equation}
with cyclic notation $q_5=q_1$.

(3). For convenience, we denote the four sides of $\square$ by
\begin{equation}
L_1=\overline{q_1,q_2},\quad L_2=\overline{q_2,q_3},\quad L_3=\overline{q_3,q_4},\quad L_4=\overline{q_4,q_1}.
\end{equation}

(4). We write $\partial_0 K=\bigcup_{i=1}^4 L_i$ for the square boundary of $K$.
\vspace{0.2cm}

In the rest of this section, we prove some geometric properties of $\USC$. Let us first introduce some more notations.

\begin{definition}\label{def22}
	Let $W_0=\{\emptyset\},W_1=\{1,2,\cdots,N\}$ be the alphabet associated with $K$,  and for $n\geq 1$, $W_n := W_1^n=\{w=w_1w_2\cdots w_n:w_i\in W_1\hbox{ for every } 1\leq i\leq n\}$ be the collection of \emph{words} of \emph{length} $n$. Also, write $W_*=\bigcup_{n=0}^\infty W_n$ for the collection of \emph{finite words}.
	
	(a). For each $w=w_1\cdots w_n\in W_n$, we denote $|w|=n$ the \emph{length} of $w$, and write
	\[\Psi_w=\Psi_{w_1}\circ \cdots \circ \Psi_{w_n}.\]
	So each $w\in W_n$ represents a \emph{$n$-cell} $\Psi_wK$ in $K$.
	
	(b). For $w, w'\in W_n$ with $n\geq 1$, we denote $w\sn w'$ if $w\neq w'$ and $\Psi_wK\cap \Psi_{w'}K\neq \emptyset$. For $A\subset W_n$, say $A$ is \emph{connected} if each pair $w\neq w'$ in $A$ is connected by a \emph{path}, i.e. there exists a chain of cells $\{w^{(i)}\}_{i=1}^l\subset A$ with $w^{(1)}=w$, $w^{(l)}=w'$ and $w^{(i)}\sn w^{(i+1)}$ for $1\leq i<l$. For disjoint connected sets $A,B\subset W_n$, we write $A\sn B$ if $w\sn w'$ for some $w\in A$ and $w'\in B$. In particular, for $w\in W_n$, we write $w\sn A$ if $w\notin A$ and $\{w\}\sn A$.

	(c). For $w=w_1\cdots w_n\in W_n,w'=w'_1\cdots w'_m\in W_m$, we write \[w\cdot w'=w_1\cdots w_n w'_1\cdots w'_m\in W_{n+m}.\]  For $A\subset W_n,B\subset W_m$, we denote $A\cdot B=\{w\cdot w':w\in A,w'\in B\}$. In particular, we abbreviate  $\{w\}\cdot B$ to $w\cdot B$.
	
	(d). For a connected set $A\subset W_n$, we write $l(A)$ the collection of real functions on $A$, and define a \emph{symmetric bilinear form} $\mcD_{n,A}$ on  $l(A)$ as,
\[\mcD_{n,A}(f,g)=\sum_{w\sn w' \text{ in } A}\big(f(w)-f(w')\big)\big(g(w)-g(w')\big)\quad \hbox{ for every } f,g\in l(A),\]
where (also in the following context) we always use $\sum_{w\sn w'}$ to mean that the sum is over all (unordered) edges $w\sn w'$, following \cite[Section 1.3]{s}.
	In particular, we write $\mcD_{n,A}(f):=\mcD_{n,A}(f,f)$, and  $\mcD_{n}:=\mcD_{n,W_n}$ for short.
	
	(e). For $n\geq 0$, we define
	\[\partial W_n=\big\{w\in W_n: \Psi_wK\cap \partial_0 K\neq \emptyset\big\}.\]
	Also, we write
	\[W_{n,i}=\big\{w\in W_n:\Psi_wK\cap L_i\neq\emptyset\big\},\text{ for } i=1,2,3,4,\]
	and $W_{*,i}=\bigcup_{n=0}^\infty W_{n,i}$.
\end{definition}

In the following, we show that a $\USC$ $K$ has the following geometric properties \textbf{(A1)}-\textbf{(A4)}. In particular, \textbf{(A1)}, \textbf{(A2)}, \textbf{(A4)} are the same as in \cite[Section 2]{KZ}, while \textbf{(A3)} is relaxed for $\USC$ (we fix the constant $c_0$ in \textbf{(A3)} for later use).

\vspace{0.2cm}

\textbf{(A1).} \emph{The \emph{open set condition}: there exists a non-empty open set $U\subset \mathbb{R}^2$ such that $\bigcup_{i=1}^N \Psi_iU\subset U$ and $\Psi_iU\cap \Psi_jU=\emptyset$ for every $i\neq j\in \{1,\cdots,N\}$. }

\textbf{(A2)}. \emph{For $f\in l(W_n)$, $\mcD_n(f)=0$ if and only if $f$ is a constant function.}

\textbf{(A3)}. \emph{There is a constant $0<c_0<1$ satisfying the following.}

\emph{(1). If $x,y\in K$ and $d(x,y)< c_0k^{-n}$, then there exist $w, w',w''\in W_n$ such that $x\in \Psi_{w}K$, $y\in \Psi_{w'}K$ and $\Psi_{w}K\cap \Psi_{w''}K\neq \emptyset$, $\Psi_{w'}K\cap \Psi_{w''}K\neq \emptyset$. }

\emph{(2). If $w,w'\in W_n$ and there is no $w''\in W_n$ so that
	\[\Psi_wK\cap \Psi_{w''}K\neq \emptyset, \quad \Psi_{w'}K\cap \Psi_{w''}K\neq \emptyset,\]
then $d(x,y)\geq c_0 k^{-n}$ for any $x\in \Psi_wK$ and $y\in \Psi_{w'}K$. }

\textbf{(A4)}. \emph{$\partial W_n\neq W_n$ for $n\geq 2$.}\vspace{0.2cm}

\noindent\textbf{Remark 1.} By \textbf{(A1)}, the \emph{Hausdorff dimension} of $K$ is $d_H=\frac{\log N}{\log k}$, the unique solution to the equation $\sum_{i=1}^N(\frac{1}{k})^\alpha=1$. Throughout the paper, we will always choose $\mu$ to be the \emph{normalized $d_H$-dimensional Hausdorff measure} on $K$. In other words, $\mu$ is the unique self-similar probability measure on $K$ such that $\mu=\frac{1}{N}\sum_{i=1}^N\mu\circ \Psi_i^{-1}$.
\vspace{0.2cm}

\noindent\textbf{Remark 2.} In \cite{KZ}, \textbf{(A3)}-(2) was stated as:

\noindent \textit{If $w,w'\in W_n$ and $\Psi_wK\cap \Psi_{w'}K=\emptyset$, then $d(x,y)\geq c_0 k^{-n}$ for any $x\in \Psi_wK$ and $y\in \Psi_{w'}K$.}

One can check that this does not hold if we let two cells $\Psi_iK,\Psi_jK$, $i\neq j\in \{1,\cdots,N\}$ to have intersection length
\[|\Psi_iK\cap \Psi_jK|=\sum_{l=2}^\infty k^{-\frac{l(l+1)}{2}},\]
where $|\Psi_iK\cap \Psi_jK|$ denotes the length of $\Psi_iK\cap \Psi_jK$. See Figure \ref{figadd1} for a concrete example.
\begin{figure}[htp]
	\includegraphics[width=4.9cm]{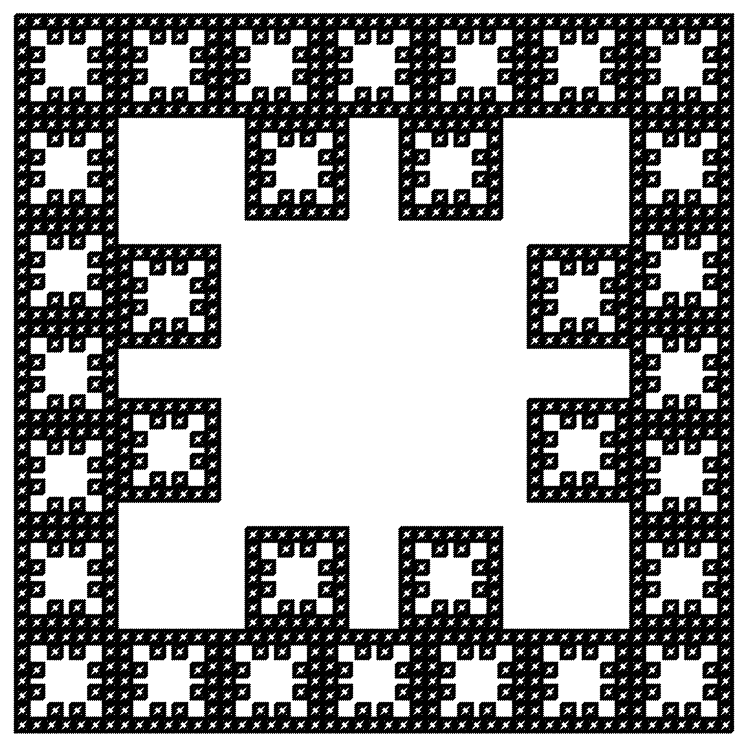}
	\caption{A $\USC$ with $k=7,N=32$ and $\Psi_{25}(x)=\frac{1}{7}x+(\frac{2}{7}+\sum\limits_{l=2}^\infty 7^{-\frac{l(l+1)}{2}},\frac{1}{7})$.}\label{figadd1}
\end{figure}

\noindent\textbf{Remark 3.} The condition \textbf{(A3)} is exactly the condition that ``\emph{the Euclidean metric is $2$-adapted}'' introduced in \cite{ki5} by Kigami.

\begin{proof}[Proof of \textbf{(A1)}-\textbf{(A4)}] \hspace{0.2cm}

\textbf{(A1)}, \textbf{(A2)}, \textbf{(A4)} are obvious. It remains to verify \textbf{(A3)}. \vspace{0.2cm}
	
Let
\[c_0=\min\Big\{\frac 12, k\cdot\min\big\{d(\Psi_iK,\Psi_jK):\Psi_iK\cap \Psi_jK=\emptyset, 1\leq i,j\leq N\big\}\Big\}.\]

\textbf{(A3)}-(1). Let $x,y\in K$ with  $d(x,y)<c_0k^{-n}$ for some $n\geq 1$.
Let \[m_0=1+\max\big\{m\geq 0: \text{ there is } \tau\in W_m \text{ such that } x,y\in \Psi_\tau K\big\}.\]

\textit{Case 1. $m_0>n$. }
	
In this case, there is $w\in W_n$ such that $x,y\in \Psi_{w}K$, so \textbf{(A3)}-(1) holds immediately.\vspace{0.2cm}

\textit{Case 2. $m_0\leq n$. }
	
In this case, we show that there is $w\neq w'\in W_{m_0}$ and $i,i'\in\{1,2,3,4\}$ such that
\[\Psi_{w}K\cap \Psi_{w'}K\neq \emptyset,\]
and
\[x\in \bigcup_{\tilde{w}\in W_{n-m_0,i}}\Psi_{w\cdot \tilde{w}}K,\quad y\in \bigcup_{\tilde{w}\in W_{n-m_0,i'}}\Psi_{w'\cdot \tilde{w}}K,\quad |i-i'|=2.\]
This makes the geometry clear, and \textbf{(A3)}-(1) follows easily. We prove the observation below.

First, by the definition of $m_0$, there is $\tau\in W_{m_0-1}$ such that $x\in \Psi_{\tau} K$, $y\in \Psi_{\tau}K$.  Next, noticing that $d(\Psi_\tau^{-1}x,\Psi_\tau^{-1}y)<c_0 k^{-n+(m_0-1)}\leq c_0k^{-1}$, by the definition of $c_0$, we have $1\leq j\neq j'\leq N$ so that $\Psi_\tau^{-1}x\in \Psi_jK$, $\Psi_{\tau}^{-1}y\in \Psi_{j'}K$ and $\Psi_jK\cap \Psi_{j'}K\neq\emptyset$. We let $w=\tau\cdot j$ and $w'=\tau\cdot j'$. Finally, choose $1\leq i,i'\leq 4$ such that
\[\Psi_wK\cap \Psi_{w'}K\subset \Psi_wL_i,\quad \Psi_wK\cap \Psi_{w'}K\subset \Psi_{w'}L_{i'},\quad |i-i'|=2.\]
If $x\notin \bigcup_{\tilde{w}\in W_{n-m_0,i}}\Psi_{w\cdot \tilde{w}}K$, then $d(x,y)\geq d(x,\Psi_{w'}K)\geq k^{-n}$, which contradicts the fact $d(x,y)<c_0k^{-n}$. So $x\in \bigcup_{\tilde{w}\in W_{n-m_0,i}}\Psi_{w\cdot \tilde{w}}K$, and by a same argument $y\in \bigcup_{\tilde{w}\in W_{n-m_0,i'}}\Psi_{w'\cdot \tilde{w}}K$.

\vspace{0.2cm}

\textbf{(A3)}-(2). We prove \textbf{(A3)}-(2) by contradiction. Let $w, w'\in W_n$, and assume there is no $w''\in W_n$ so that $\Psi_wK\cap \Psi_{w''}K\neq \emptyset,\Psi_{w'}K\cap \Psi_{w''}K\neq \emptyset$. If there are $x\in \Psi_wK,y\in \Psi_{w'}K$ so that $d(x,y)< c_0k^{-n}$, then we can find $x'\in \Psi_w(K\setminus \partial_0K)$ and $y'\in \Psi_{w'}(K\setminus \partial_0K)$ so that $d(x',y')<c_0k^{-n}$. In particular, $\Psi_wK$ and $\Psi_{w'}K$ are the only $n$-cells containing $x'$ and $y'$ respectively. By \textbf{(A3)}-(1) with respect to $x'$ and $y'$, we know there is $w''\in W_n$ satisfying $\Psi_wK\cap \Psi_{w''}K\neq \emptyset,\Psi_{w'}K\cap \Psi_{w''}K\neq \emptyset$. A contradiction.
\end{proof}

\section{A limit form of Kusuoka and Zhou}\label{sec3}
In this section, we will introduce the celebrated results of Kusuoka and Zhou \cite{KZ}. First, we define the \textit{Poincare constants} $\lambda_n$, $R_n$, $\sigma_n$, which were introduced in \cite{KZ}. In particular, we will provide a short proof of Theorem \ref{thm34}  (a statement combining of \cite[Theorem 5.4]{KZ} and \cite[Theorem 7.2]{KZ} in the recurrent case) based on the method of $\Gamma$-convergence. The method of $\Gamma$-convergence in the construction of Dirichlet forms on self-similar sets was also used by Grigor'yan and Yang in \cite{GY}. Our story will not involve the constant $\lambda_n^{(D)}$  in \cite{KZ}. Also, we modify the definition of $R_n$ for the compatibility with the weaker version of \textbf{(A3)}-(2).

\begin{definition}[Poincare constants \cite{KZ}]\label{def31}
For $n\geq 0$, $A\subset W_n$ and $f\in l(A)$, we write
\[[f]_A=(\#A)^{-1}\sum_{w\in A}f(w).\]
	
(a). For $n\geq 1$, define
\[\lambda_n=\sup\big\{\sum_{w\in W_n}\big(f(w)-[f]_{W_n}\big)^2:f\in l(W_n), \mcD_n(f)=1\big\}.\]
	
	
(b). For $n\geq 1$, $A,B\subset W_n$ with $A\cap B=\emptyset$,  define \[R_{n}(A,B)=\max\big\{\big(\mcD_n(f)\big)^{-1}:f\in l(W_n),f|_A=0,f|_B=1\big\},\] the \emph{effective resistance} between $A$ and $B$.
	
For $w\in W_*$, we write
\[\mathcal{N}_w=\bigcup\big\{w'\in W_{|w|}:\text{there is }w''\in W_{|w|}\text{ so that }\Psi_wK\cap \Psi_{w''}K\neq \emptyset, \Psi_{w'}K\cap \Psi_{w''}K\neq \emptyset\big\},\]
call it the \emph{$2$-adapted neighbourhood} of $w$ (see \cite[Chapter 2]{ki5} for the adaptedness), and abbreviate $W_{|w|}\setminus \mathcal{N}_w$ to $\mathcal {N}_w^c$.
	
For $m\geq 1$, define
\[R_m=\inf\big\{R_{{|w|+m}}(w\cdot W_m,  \mathcal{N}_w^c\cdot W_m):w\in W_*\setminus \{\emptyset\}\big\}.\]
	
(c). For $m,n\geq 1$ and $w\sn w'$,  define
\[\sigma_{m}(w,w')=\sup\big\{N^m\big([f]_{w\cdot W_m}-[f]_{w'\cdot W_m}\big)^2:f\in l(W_{n+m}), \mcD_{n+m, \{w,w'\}\cdot W_m}(f)=1\big\}.\]
	
For $m\geq 1$, define
\[\sigma_m=\sup\big\{\sigma_{m}(w,w'):n\geq 1, w,w'\in W_n,w\sn w'\big\}.\]
\end{definition}

\noindent\textbf{Remark 1}. We will use effective resistances a lot of times in this paper, so we review some basic facts here for the convenience of readers. Readers can also read the monographs \cite{ki3,ki4} for deep studies on effective resistances on general spaces. For $n\geq 1$ and $A,B\subset W_n$ with $A\cap B=\emptyset$, the following two claims hold.
	
(1).  We can find a unique $h\in l(W_n)$, which is partially reflected in the definition (we use ``$\max$'' instead of ``$\sup$''), such that $h|_A=0$, $h|_B=1$ and $\mcD_n(h)=\big(R_n(A,B)\big)^{-1}$. In fact, $\{f\in l(W_n):f|_A=0\}$ equipped with the inner product $\mcD_n(\cdot,\cdot)$ is a finitely dimensional Hilbert space, and $\{f\in l(W_n):f|_A=0,f|_B=1\}$ is a convex closed subset (since it is an affine subspace of a finitely dimensional Hilbert space).

It follows immediately that $R_n(A,B)$ is positive and finite.

(2). If $f\in l(W_n)$ satisfies that $f|_A\leq a$ and $f|_B\geq b$ for some $a<b$, then $\mcD_n(f)\geq (b-a)^2\big(R_n(A,B)\big)^{-1}$. In fact, let $g=\big(\frac{f-a}{b-a}\vee 0\big)\wedge 1$, then $g|_A=0$, $g|_B=1$ and by the Markov property, we have
\[
\mcD_n(f)=(b-a)^2\mcD_n(\frac{f-a}{b-a})\geq (b-a)^2\mcD_n(g)\geq (b-a)^2\big(R_n(A,B)\big)^{-1}.
\]

Finally, we remark that we need to consider the effective resistances with respect to $\mcD_{n,A'}$ for some connected $A'\subset W_n$, for example, in Lemma \ref{lemma47}. The above observations, with the definition of effective resistances adjusted accordingly still work there. \vspace{0.2cm}

\noindent\textbf{Remark 2}. Similarly to Remark 1 (1), for $m,n\geq 1$ and $w\sn w'$, we actually have
\[\sigma_m(w,w')=\max\big\{N^m\big([f]_{w\cdot W_m}-[f]_{w'\cdot W_m}\big)^2:f\in l(W_{n+m}), \mcD_{n+m, \{w,w'\}\cdot W_m}(f)=1\big\},\]
i.e. the supremum is attained. In fact, if we fix a word $v\in W_m$, then
\[\begin{aligned}
\sigma_m(w,w')=N^m\Big(\inf\big\{\mcD_{n+m, \{w,w'\}\cdot W_m}(f): &f\in l(\{w,w'\}\cdot W_m),\\& [f]_{w\cdot W_m}-[f]_{w'\cdot W_m}=1,\ f(w\cdot v)=0\big\}\Big)^{-1}.
\end{aligned}\]
Then, just as in Remark 1 (1), $\{f\in l(\{w,w'\}\cdot W_m):f(w\cdot v)=0\}$ equipped with the inner product $\mcD_{n+m,\{w,w'\}\cdot W_m}(\cdot,\cdot)$ is a Hilbert space, and $\{f\in l(\{w,w'\}\cdot W_m):f(w\cdot v)=0,[f]_{w\cdot W_m}-[f]_{w'\cdot W_m}=1\}$ is a convex closed subset.\vspace{0.2cm}

One of the important results in \cite{KZ}, based on the conditions \textbf{(A1)}-\textbf{(A4)}, is the comparison of the Poincare constants. See Proposition \ref{prop32} for the inequalities.

In particular, in our setting, \textbf{(A3)}-(2) is a little weaker than the original one, but the proof in \cite{KZ} still works perfectly. For the convenience of readers, we reproduce the proof of Proposition \ref{prop32} and simplify a few detailed arguments in Appendix \ref{AppendixA}.

\begin{proposition}[\cite{KZ}, Theorem 2.1]\label{prop32}
For a fixed $\USC$ $K$, there is a constant $C>0$ such that
\begin{equation}\label{eqn31}
C^{-1}\cdot \lambda_nN^{m}R_m\leq \lambda_{n+m}\leq C\cdot \lambda_n\sigma_m \quad\hbox{ for every } n\geq 1,m\geq 1,
\end{equation}
and,
\begin{equation}\label{eqn32}
R_n\geq C\cdot(k^2N^{-1})^n\quad \hbox{ for every } n\geq 1.
\end{equation}
In addition, all the constants $\lambda_n,R_n$ and $\sigma_n$, $n\geq 1$ are positive and finite.
\end{proposition}

\noindent\textbf{Remark.} Proposition \ref{prop32} consists of (2.3), (2.4) and (2.6) of \cite[Theorem 2.1]{KZ}. \vspace{0.2cm}

Properties of $R_n$ were extensively explored in \cite{ki5} by Kigami for compact metric spaces. See \cite[Lemma 4.6.15]{ki5} for a generalized version of (\ref{eqn32}). In particular, (\ref{eqn32}) implies the process is recurrent on an infinite Sierpinski graph since $N<k^2$. It is  not hard to see from (\ref{eqn31}) that
\begin{equation}\label{eqn33}
C^{-1}\cdot N^mR_m\leq \lambda_m\leq C\cdot \sigma_m,
\end{equation}
for some $C>0$ independent of $m$. In fact, by letting $m=1$ in (\ref{eqn31}), one immediately see $\lambda_n\asymp \lambda_{n+1}$. Then, by letting $n=1$ in (\ref{eqn31}), we get (\ref{eqn33}). To see that $N^mR_m\asymp\lambda_m\asymp \sigma_m$, we need to verify another inequality, stated in the following condition \textbf{(B)}.
\vspace{0.2cm}

\textbf{(B)}. \textit{There is a constant $C>0$ such that $\sigma_n\leq C\cdot N^nR_n$, for $n\geq 1$. }
\vspace{0.2cm}

\begin{lemma}\label{lemma33}
Assume \textbf{(B)}, then there is $r>0$ such that
	\[N^{-n}\lambda_n\asymp r^{-n}.\]
	In addition,  $N^{n}R_n\asymp \lambda_n\asymp \sigma_n$ and $r\leq\frac{N}{k^2}$.
\end{lemma}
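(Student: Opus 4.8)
The plan is to combine the three-term comparison \eqref{eqn31} of Proposition \ref{prop32} with condition \textbf{(B)} to pin down the growth rate of $\lambda_n$. First I would record the easy consequences already noted in the text: from \eqref{eqn31} with $m=1$ we get $\lambda_n \asymp \lambda_{n+1}$ (using $R_1>0$ and $\sigma_1<\infty$, which follow from \textbf{(A1)}--\textbf{(A4)}), and then from \eqref{eqn31} with $n=1$ we obtain \eqref{eqn33}, i.e. $N^m R_m \leq C\lambda_m \leq C\sigma_m$ up to constants. Condition \textbf{(B)} supplies the reverse inequality $\sigma_n \leq C N^n R_n$, so all three quantities $N^n R_n$, $\lambda_n$, $\sigma_n$ are comparable, which is the second assertion. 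This already gives the ``in addition'' clause except for the bound $r\leq N/k^2$.

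The heart of the matter is the submultiplicativity needed to produce the exponential rate. I would set $a_n := N^{-n}\lambda_n$ and aim to show $a_{n+m} \asymp a_n a_m$ (a Fekete-type / almost-multiplicative relation), from which $a_n \asymp r^{-n}$ for $r := \lim (a_n)^{-1/n}$ follows by the standard subadditivity argument applied to $-\log a_n$ after correcting for the multiplicative constant. The upper bound $a_{n+m}\leq C a_n a_m$ comes directly from the right inequality in \eqref{eqn31} together with $\sigma_m \asymp N^m R_m \asymp \lambda_m$, i.e. $\lambda_{n+m}\leq C\lambda_n\sigma_m \leq C'\lambda_n\lambda_m$, hence $a_{n+m}\leq C' a_n a_m$. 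The lower bound $a_{n+m}\geq c\, a_n a_m$ comes from the left inequality $\lambda_{n+m}\geq C^{-1}\lambda_n N^m R_m$ together with $N^m R_m \asymp \lambda_m$, giving $\lambda_{n+m}\geq c\lambda_n\lambda_m$, i.e. $a_{n+m}\geq c\, a_n a_m$. Then $b_n := \log(C' a_n)$ is subadditive and $-b_n$ is bounded above appropriately, so $b_n/n$ converges; unwinding, $a_n \asymp r^{-n}$ with $r = \exp(-\lim b_n/n) \in (0,\infty)$. Positivity and finiteness of $r$ follow since $0 < a_n < \infty$ for each $n$ and the comparison constants are uniform.

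Finally, for $r \leq N/k^2$: I would invoke \eqref{eqn32}, which gives $R_n \geq C(k^2 N^{-1})^n$, hence $\lambda_n \geq c\, N^n R_n \geq c\, k^{2n}$, so $a_n = N^{-n}\lambda_n \geq c\,(k^2/N)^n$. Comparing with $a_n \asymp r^{-n}$ forces $r^{-1} \geq k^2/N$, i.e. $r \leq N/k^2$.

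I expect the main obstacle to be the bookkeeping in the almost-multiplicative step: \eqref{eqn31} mixes $\lambda$, $R$, and $\sigma$, so one must carefully use \textbf{(B)} and \eqref{eqn33} to convert every occurrence of $\sigma_m$ and $N^m R_m$ into $\lambda_m$ with uniform constants before the Fekete argument applies; the constants must be tracked so that they do not degrade with $n$ or $m$. Once the clean relation $\lambda_{n+m}\asymp \lambda_n\lambda_m/N^m$ (equivalently $a_{n+m}\asymp a_n a_m$) is in hand, the existence of the rate $r$ is routine, and the inequality $r\leq N/k^2$ is immediate from the recurrence estimate \eqref{eqn32}.
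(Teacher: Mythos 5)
Your proposal is correct and follows essentially the same route as the paper: use \eqref{eqn33} together with \textbf{(B)} to get $N^nR_n\asymp\lambda_n\asymp\sigma_n$, feed this back into \eqref{eqn31} to obtain the almost-multiplicative relation $\lambda_{n+m}\asymp\lambda_n\lambda_m$, apply the standard Fekete-type argument (which the paper cites as routine, via \cite{BB4,KZ}) to get $\lambda_n\asymp N^nr^{-n}$, and then use \eqref{eqn32} to conclude $r\leq N/k^2$. The only slip is the parenthetical ``$\lambda_{n+m}\asymp\lambda_n\lambda_m/N^m$'' in your closing paragraph: the relation equivalent to $a_{n+m}\asymp a_na_m$, and the one you actually derived in the body of the argument, is $\lambda_{n+m}\asymp\lambda_n\lambda_m$ without the factor $N^{-m}$.
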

\begin{proof}
	The estimate $N^{n}R_n\asymp\lambda_n\asymp \sigma_n$ is an immediate consequence of (\ref{eqn33}) and \textbf{(B)}. Then, by (\ref{eqn31}), there are constants $C_1,C_2>0$ so that
	\[C_1\cdot\lambda_n\lambda_m\leq \lambda_{n+m}\leq C_2\cdot\lambda_n\lambda_m\quad \hbox{ for every } n,m\geq 1.\]
	It follows by a routine argument (see \cite{BB4,KZ}) that there exists $r>0$ such that $\lambda_n\asymp N^nr^{-n}$. Finally, by (\ref{eqn32}), $N^{-n}\lambda_n\asymp R_n\geq C\cdot (k^2N^{-1})^n$, so $r\leq\frac{N}{k^2}$.
\end{proof}

We call $r$ the \textit{renormalization factor} in this paper. For $\USC$, we always have $r\leq 1-\frac{1}{k^2}$.\vspace{0.2cm}

With some abuse of notations, we write
\[\mcD_n(f)=\mcD_n(P_nf)\quad \hbox{ for every } f\in L^1(K,\mu),\]
where $P_n:L^1(K,\mu)\to l(W_n)$ is defined as
\begin{equation}\label{eqn34}
	(P_nf)(w)=N^{n}\int_{\Psi_wK}f(x)\mu(dx) \quad\hbox{ for every } w\in W_n.
\end{equation}
Then, $\mcD_n$, $n\geq 1$ can be viewed as \emph{quadratic forms} on $L^2(K,\mu)$.

In \cite{KZ}, it is shown that a limit form of the discrete energies $\mcD_n$ can be well defined under slightly different conditions (see \textbf{(B1)}, \textbf{(B2)} of \cite{KZ}). The same result still holds here, and the essences of the arguments are still the same as in \cite{KZ}. We combine \cite[Theorem 5.4]{KZ} and \cite[Theorem 7.2]{KZ} together into the following Theorem \ref{thm34}.

\begin{theorem}\label{thm34}
Assume \textbf{(B)} and $0<r<1$. Let
\[\bar{\mcF}=\{f\in L^2(K,\mu):\sup_{n\geq 1}r^{-n}\mcD_n(f)<\infty\}.\]
	
(a). Let $\theta=-\frac{\log r}{\log k}$. Then there is a constant $C>0$ such that
\[|f(x)-f(y)|^2\leq C\cdot d(x,y)^{\theta}\sup_{n\geq 1}r^{-n}\mcD_n(f) \quad\text{ for every }  x,y\in K,\, f\in \bar{\mcF}.\]
In particular, $\bar{\mathcal F}\subset C(K)$.
	
(b). There is a  regular Dirichlet form $(\bar{\mcE},\bar{\mcF})$ and $C_1,C_2>0$ such that
\[C_1\cdot \sup_{n\geq 1} r^{-n}\mcD_n(f)\leq \bar{\mcE}(f)\leq C_2\cdot \liminf_{n\to\infty} r^{-n}\mcD_n(f)\quad
\text{ for every } f\in \bar{\mcF}.\]
Moreover, $(\bar{\mcE},\bar{\mcF})$ is $D_4$-\emph{symmetric}, i.e. for every $f\in \mcF$ and $\Gamma\in \mathscr{G}$, we have $f\circ\Gamma\in\bar\mcF$ and $\bar\mcE(f)=\bar\mcE(f\circ\Gamma)$, where $\mathscr{G}$ is the group of self-isometrics on $\square$ as defined in Section \ref{sec2}.
\end{theorem}

Theorem \ref{thm34} was proved with a probabilistic approach in \cite{KZ}. In the rest of this section, we provide an alternative  approach to prove Theorem \ref{thm34} by using \emph{the method of $\Gamma$-convergence}, but postpone the difficult verification of the condition \textbf{(B)} to the next section.

The alternative short proof \big(especially the proof of regularity of $(\bar{\mcE},\bar{\mcF})$\big) only works in the recurrent situation. In other words, we need to use $r<1$. Barlow-Bass's work \cite{BB3} is the only successful case of construction of non-recurrent Dirichlet forms on self-similar sets.
In particular, we will use the following observation, which comes from \cite[Theorem 7.2]{KZ}.

\begin{lemma}\label{lemma35}
There is $C>0$ such that
\[\big|f(w\cdot w')-[f]_{w\cdot W_n}\big|^2\leq C\cdot N^{-n}\lambda_n\mcD_{m+n,w\cdot W_n}(f),\]
for any $m\geq 0, w\in W_m$, $n\geq 1,w'\in W_n$ and $f\in l(W_{m+n})$.
\end{lemma}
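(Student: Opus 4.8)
\textbf{Proof proposal for Lemma \ref{lemma35}.}

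The plan is to bound the oscillation of $f$ on $W_n$ via the Poincaré constants $\lambda_m$ at smaller scales, combined with the submultiplicativity $\lambda_{n+m}\leq C\lambda_n\sigma_m$ (and hence $\lambda_n\asymp N^nr^{-n}$ from Lemma \ref{lemma33}) to telescope the scale-by-scale estimates. First I would fix $n\geq 1$ and $w\in W_n$, and for each $0\leq j\leq n$ let $w^{(j)}\in W_j$ be the truncation of $w$ to its first $j$ letters, so that $w^{(n)}=w$ and $w^{(0)}=\emptyset$. The quantity $f(w)-[f]_{W_n}$ telescopes along the chain $F_{w^{(0)}}K\supset F_{w^{(1)}}K\supset\cdots\supset F_{w^{(n)}}K$ as
\[
f(w)-[f]_{W_n}=\sum_{j=0}^{n-1}\Big([f]_{w^{(j+1)}\cdot W_{n-j-1}}-[f]_{w^{(j)}\cdot W_{n-j}}\Big),
\]
using that $[f]_{w^{(n)}\cdot W_0}=f(w)$ and $[f]_{\emptyset\cdot W_n}=[f]_{W_n}$. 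The term at level $j$ is the difference of the average of $f$ over the subcell $F_{w^{(j+1)}}K$ and over the parent cell $F_{w^{(j)}}K$; since the parent average is itself a weighted average of the $N$ child averages $[f]_{w^{(j)}\cdot i\cdot W_{n-j-1}}$, $i\in W_1$, this difference is controlled by the $\mcD$-oscillation among sibling subcells, which is exactly what a Poincaré-type inequality for $\mcD_{j+1}$ on the cell $F_{w^{(j)}}K$ estimates.

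The key step is therefore a local Poincaré estimate: for any $v\in W_*$ and $m\geq 1$,
\[
\sum_{\substack{u\in v\cdot W_m}}\big([f]_{u\cdot W_{\ell}}-[f]_{v\cdot W_{m+\ell}}\big)^2\leq C\cdot N^{-m}\lambda_m\cdot \mcD_{|v|+m,\,v\cdot W_m}(f),
\]
which is the self-similar copy of the definition of $\lambda_m$ placed inside the cell $F_vK$ (the factor $N^{-m}$ accounts for normalizing the number of subcells, and averaging $f$ further down to level $m+\ell$ only contracts). Applying this with $m=1$, $v=w^{(j)}$, $\ell=n-j-1$ and keeping only the single term $u=w^{(j+1)}$ bounds the $j$-th telescoping term by $C\cdot N^{-1}\lambda_1\cdot\mcD_{j+1,w^{(j)}\cdot W_1}(f)$. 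One then sums over $j$; since the cells $F_{w^{(j)}}K$ are nested and the local forms $\mcD_{j+1,w^{(j)}\cdot W_1}$ are, up to the self-similar scaling, dominated by $r^{j}\mcD_n(f)$ uniformly (here $\mcD_{n}(f)\geq c\,r^{-j}\mcD_{j,\{w^{(j)}\}}(\text{restriction})$ by the super-multiplicativity $\lambda_{n}\geq C^{-1}\lambda_j N^{n-j}R_{n-j}$ reorganized, or more directly by the monotonicity built into the definition of $\mcD_n$ via $P_n$), a Cauchy–Schwarz step over the $n$ terms together with $\sum_{j} r^{j}<\infty$ (recall $r<1$ for $\USC$) collapses the sum to $C\cdot\mcD_n(f)$ times the worst prefactor. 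Matching the prefactor: the scaling gives $N^{-n}\lambda_n$ as the correct envelope, because $\lambda_n\asymp N^n r^{-n}$ makes $N^{-n}\lambda_n\asymp r^{-n}$, which is exactly the constant appearing when one normalizes $\mcD_n(f)$ by $r^{-n}$ in the definition of $\mcF$.

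The main obstacle I anticipate is bookkeeping the normalizations correctly: reconciling the two natural ways of writing a subcell average — as $P_n f$ evaluated on a word versus as a $\mu$-average over $F_wK$ — and making sure the telescoping along a single descending chain (rather than over all of $W_n$ at once) genuinely produces the factor $N^{-n}\lambda_n$ and not, say, $\lambda_n$ or $N^{-n}\lambda_n^2$. The cleanest route is probably to avoid telescoping chain-by-chain and instead invoke the definition of $\lambda_n$ directly at level $n$ inside the whole carpet: $\sum_{w\in W_n}(f(w)-[f]_{W_n})^2\leq \lambda_n\mcD_n(f)$, and then separately bound the single-word maximum $\max_w|f(w)-[f]_{W_n}|^2$ by this sum times the extra factor $N^{-n}$ — but that $\ell^\infty$-vs-$\ell^2$ loss is too lossy, so one really does need the self-similar/telescoping argument sketched above, with the geometric series in $r$ doing the work of upgrading the $\ell^2$ bound to the pointwise bound with only an $N^{-n}\lambda_n$ prefactor. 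Once the prefactors are tracked, the statement follows; note this is precisely the discrete input used for the Hölder continuity in Theorem \ref{thm34}(a), so consistency with $\theta=-\log r/\log k$ is a useful sanity check throughout.
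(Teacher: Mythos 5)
Your telescoping along the truncations $w^{(j)}$ is the same skeleton as the paper's argument (the paper writes $B_l=w_1\cdots w_{n-l}\cdot W_l$ and telescopes $[f]_{B_{l-1}}-[f]_{B_l}$), but the way you close each step has a genuine gap. With $m=1$ your ``local Poincar\'e estimate'' is applied to the coarse-grained function $i\mapsto [f]_{w^{(j)}\cdot i\cdot W_{n-j-1}}$, so its right-hand side is the level-$(j+1)$ energy of that coarse-graining, not $\mcD_n(f)$. You then assert this coarse energy is dominated by $r^{j}\mcD_n(f)$ ``by the monotonicity built into the definition of $\mcD_n$ via $P_n$'' or by ``reorganizing'' $\lambda_n\geq C^{-1}\lambda_j N^{n-j}R_{n-j}$. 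Neither works: there is no such monotonicity --- controlling the energy of $P_{j+1}f$ by that of $P_n f$ is exactly Lemma \ref{lemma36} and costs a factor $N^{-(n-j-1)}\sigma_{n-j-1}$ --- and an inequality between the constants $\lambda_j,R_{n-j}$ says nothing about coarse-graining one particular $f$. If you insert Lemma \ref{lemma36} correctly, your sum becomes $\sum_j\sqrt{N^{-(n-j-1)}\sigma_{n-j-1}}$, and to compare it with $\sqrt{N^{-n}\lambda_n}$ you need an upper bound on $\sigma_m$, i.e.\ condition \textbf{(B)} (equivalently Lemma \ref{lemma33}, $\lambda_n\asymp N^nr^{-n}$ and $r<1$), which you indeed invoke throughout. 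In the paper's architecture that is circular: Lemma \ref{lemma35} is an input to Lemma \ref{lemma48}, which is the key step in verifying \textbf{(B)}, so its proof may only use \textbf{(A1)}--\textbf{(A4)}, i.e.\ (\ref{eqn31})--(\ref{eqn33}); neither $r$ nor any upper bound on $\sigma_m$ is available at this stage.

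The repair is the paper's route: do not coarse-grain at all. At step $l$ apply the definition of $\lambda_l$ directly to $f$ on the block $B_l=w_1\cdots w_{n-l}\cdot W_l$ (a scaled copy of $W_l$ whose edges persist inside $W_n$), and use Jensen over $B_{l-1}\subset B_l$ to get $\big|[f]_{B_{l-1}}-[f]_{B_l}\big|^2\leq N^{-l+1}\sum_{w'\in B_l}\big(f(w')-[f]_{B_l}\big)^2\leq N^{-l+1}\lambda_l\,\mcD_n(f)$. Then $\sigma$ and $r$ never appear, and the sum $\sum_{l=1}^n\sqrt{N^{-l+1}\lambda_l}$ is dominated by its last term because $N^{-l}\lambda_l\leq C\,(Nk^{-2})^{n-l}N^{-n}\lambda_n$ by (\ref{eqn31}) and (\ref{eqn32}) (recall $N<k^2$), which yields exactly $\big|f(w)-[f]_{W_n}\big|\leq C\sqrt{N^{-n}\lambda_n\,\mcD_n(f)}$.
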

\begin{proof}
	For short, we define $g\in l(W_n)$ by $g(v)=f(w\cdot v)$ for every $v\in W_n$. Let $w'=w'_1\cdots w'_n$ be the word that appears in the lemma. Let $B_n=W_n$ and $B_l=w'_1\cdots w'_{n-l}\cdot W_l$ for $0\leq l\leq n-1$. By applying the definition of $\lambda_l$, $l\geq 1$, we have
	\[
	\big|[g]_{B_{l-1}}-[g]_{B_{l}}\big|^2 \leq N^{-l+1}\sum_{v\in B_{l-1}}\big(g(v)-[g]_{B_{l}}\big)^2\leq N^{-l+1}\lambda_l\mcD_n(g).
	\]
	On the other hand, by using the left side of (\ref{eqn31}) and  (\ref{eqn32}), we have
	\[N^{-l}\lambda_l\leq N^{-l}\cdot C_1\frac{\lambda_n}{N^{n-l}R_{n-l}}=C_1\cdot \frac{N^{-n}\lambda_n}{R_{n-l}}\leq C_2\cdot(\frac{N}{k^2})^{n-l}(N^{-n}\lambda_n),\]
	for some $C_1,C_2>0$. Thus by taking $C=\frac{C_2N}{(1-k^{-1}N^{1/2})^2}$, we have
	\[\big|g(w')-[g]_{W_n}\big|\leq \sum_{l=1}^n  \big|[g]_{B_{l-1}}-[g]_{B_{l}}\big|\leq \sum_{l=1}^{n} \sqrt{N^{-l+1}\lambda_{l}}\cdot\sqrt{\mcD_n(g)}\leq \sqrt{C}\cdot \sqrt{N^{-n}\lambda_n}\cdot\sqrt{\mcD_n(g)}.
    \]
	The lemma then follows because $\big|f(w\cdot w')-[f]_{w\cdot W_n}\big|=\big|g(w')-[g]_{W_n}\big|$ and $\mcD_{n}(g)=\mcD_{m+n,w\cdot W_n}(f)$.
\end{proof}

We also need the following easy observation.

\begin{lemma}\label{lemma36}
	Let $n,m\geq 1$ and $f\in l(W_{n+m})$. Let $f'\in l(W_n)$ such that $f'(w)=[f]_{w\cdot W_m}$ for every $w\in W_n$. Then we have
	\[\mcD_{n}(f')\leq 8 \cdot N^{-m}\sigma_m\mcD_{n+m}(f).\]
\end{lemma}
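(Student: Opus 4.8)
The plan is to bound the discrete energy $\mcD_n(f')$ term by term over pairs $w \sn w'$ in $W_n$, and to control each single term $\bigl(f'(w)-f'(w')\bigr)^2 = \bigl([f]_{w\cdot W_m}-[f]_{w'\cdot W_m}\bigr)^2$ by the definition of $\sigma_m(w,w')$. Recall that for $w\sn w'$ the definition gives
\[
\bigl([f]_{w\cdot W_m}-[f]_{w'\cdot W_m}\bigr)^2 \leq N^{-m}\sigma_m(w,w')\,\mcD_{n+m,\{w,w'\}\cdot W_m}(f) \leq N^{-m}\sigma_m\,\mcD_{n+m,\{w,w'\}\cdot W_m}(f),
\]
since $\sigma_m = \sup \sigma_m(w,w')$. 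Summing over all adjacent pairs $w\sn w'$ in $W_n$, the left side becomes $\mcD_n(f')$, and the right side becomes $N^{-m}\sigma_m \sum_{w\sn w' \text{ in } W_n}\mcD_{n+m,\{w,w'\}\cdot W_m}(f)$. So the whole lemma reduces to showing the overlap bound
\[
\sum_{\substack{w,w'\in W_n\\ w\sn w'}} \mcD_{n+m,\{w,w'\}\cdot W_m}(f) \leq 8\,\mcD_{n+m}(f).
\]

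First I would unwind what $\mcD_{n+m,\{w,w'\}\cdot W_m}(f)$ counts: it is the sum of $\bigl(f(\tau)-f(\tau')\bigr)^2$ over pairs $\tau \sn \tau'$ with $\tau,\tau' \in (\{w\}\cup\{w'\})\cdot W_m \subset W_{n+m}$. Each such pair is a pair of adjacent $(n+m)$-cells. The point is that when we sum over all adjacent $w\sn w'$ in $W_n$, a fixed adjacent pair $\tau\sn\tau'$ in $W_{n+m}$ gets counted only a bounded number of times. Write $\tau = u\cdot \tilde u$, $\tau' = u'\cdot \tilde u'$ with $u,u' \in W_n$. If $u = u'$, the pair $\{\tau,\tau'\}$ is counted in the term for $\{w,w'\}$ precisely when $u \in \{w,w'\}$; but since $\tau\sn\tau'$ and they lie in the same $n$-cell, they also lie in every $\{w,w'\}\cdot W_m$ with $w=u$, and $w'$ ranges over the (boundedly many) neighbors of $u$ — here I would invoke the uniform bound on the number of neighbors of an $n$-cell, which follows from the geometry (A1)–(A3), to keep this multiplicity finite; a cleaner route is to note $\tau\sn\tau'$ within a single $n$-cell is counted once for $\{w,w'\}$ iff both $w,w' \in \{u\}$, impossible for $w\neq w'$, so actually same-$n$-cell interior pairs contribute to no term and we only need boundary pairs. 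If $u\neq u'$, then $F_uK\cap F_{u'}K\neq\emptyset$ forces $u\sn u'$, and the pair $\{\tau,\tau'\}$ is counted exactly once, in the term for $\{w,w'\}=\{u,u'\}$.

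The main obstacle is the bookkeeping of multiplicities in the $u=u'$ case and pinning down the constant $8$: I expect the honest accounting to be that each $(n+m)$-edge $\tau\sn\tau'$ lies on the shared boundary of at most two distinct $n$-cells (the two $n$-cells containing $F_\tau K$ and $F_{\tau'}K$ respectively, or just one if both sit in the same $n$-cell), and for each choice of such an $n$-cell $w$ the partner $w'$ is determined up to the side of $w$ on which the edge lies — giving a small explicit constant. Rather than chase the sharp constant, I would argue crudely: split $\mcD_{n+m,\{w,w'\}\cdot W_m}(f)$ into the part with both endpoints in $w\cdot W_m$, both in $w'\cdot W_m$, and one in each; the first two parts are pieces of $\mcD_{n+m}(f)$ restricted to a single $n$-cell and across all $w\sn w'$ each such piece repeats at most (number of neighbors) times — but a symmetric reorganization shows summing $\mcD_{n+m, w\cdot W_m}(f)$ over $w$ with multiplicity gives something like $4\,\mcD_{n+m}(f)$, while the cross terms sum to at most $2\,\mcD_{n+m}(f)$ since each cross edge is counted once. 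Combining yields the factor $8$. This is the routine but slightly fiddly part; the structural inequality from $\sigma_m$ is the substance.
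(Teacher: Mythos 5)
Your reduction is exactly the paper's: bound each term $\big(f'(w)-f'(w')\big)^2$ by $N^{-m}\sigma_m\,\mcD_{n+m,\{w,w'\}\cdot W_m}(f)$ using the definition of $\sigma_m(w,w')$, then control the overlap when summing over $w\sn w'$. That part is fine; the trouble is in your counting, which as written contains an error and does not actually produce the constant. The ``cleaner route'' aside is wrong: $\{w,w'\}\cdot W_m$ is the union $(w\cdot W_m)\cup(w'\cdot W_m)$, so $\mcD_{n+m,\{w,w'\}\cdot W_m}(f)$ includes every edge $\tau\stackrel{n+m}{\sim}\tau'$ with both endpoints in $w\cdot W_m$ (and likewise in $w'\cdot W_m$), not only the cross edges; such an intra-cell edge is counted once for every neighbour $w'$ of the cell containing it, so these pairs certainly do not ``contribute to no term'' and cannot be discarded. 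Your closing accounting (``something like $4\,\mcD_{n+m}(f)$'' for the intra-cell parts plus ``$2\,\mcD_{n+m}(f)$'' for the cross terms, ``combining yields the factor $8$'') is both unjustified and arithmetically off.

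The honest count is simpler and is precisely what the paper does: when you sum $\mcD_{n+m,\{w,w'\}\cdot W_m}(f)$ over adjacent pairs in $W_n$, an edge of $W_{n+m}$ interior to an $n$-cell $u$ is repeated at most $\#\{w'\in W_n: w'\sn u,\ w'\neq u\}$ times, a cross edge exactly once, and an $n$-cell of a $\USC$ (a square of side $k^{-n}$ touched by other such squares with pairwise disjoint interiors) has at most $8$ neighbours; hence the whole sum is at most $8\,\mcD_{n+m}(f)$. So the missing ingredient in your write-up is just this explicit neighbour bound applied uniformly to the intra-cell multiplicities; with it stated, your argument closes and coincides with the paper's proof.
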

\begin{proof}
	By the definition of $\sigma_m$, for any $w\sn w'$, we see that
	\[\big|f'(w)-f'(w')\big|^2\leq N^{-m}\sigma_m\mcD_{n+m,\{w,w'\}\cdot W_m}(f).\] Taking the summation over all $w\sn w'$, $w,w'\in W_n$, noticing that each $w$ has at most $8$ neighbours,  we get the desired inequality.
\end{proof}

\noindent\textbf{Remark.} This lemma is also useful in proving Proposition \ref{prop32}, see Appendix {\ref{AppendixA}}.\vspace{0.2cm}

Before we prove Theorem \ref{thm34}, now we briefly recall the concept of \textit{$\Gamma$-convergence}. To see general results on $\Gamma$-convergence, please refer to  the book \cite{D}.

The concept of $\Gamma$-convergence can be defined on general topological spaces (see \cite[Definition 4.1]{D}). In particular, on topological spaces with the first axiom of countability, there is an equivalent characterization that is easier to use (see \cite[Proposition 8.1]{D}). We state it below in Definition \ref{def37}.

\begin{definition}\label{def37} Let $(X,\tau)$ be a topological space satisfying the first axiom of countability. For functions $f_n, n\geq 1$ and $f$ from $X$ to $\mathbb{R}\cup \{-\infty,\infty\}$, we say $f_n$ \emph{$\Gamma$-converges} to $f$ if the following holds.
	
	(a). For any $x\in X$ and for any sequence $x_n$ converging to $x$ in $X$,
	\[f(x)\leq\liminf\limits_{n\to\infty}f_n(x_n). \]
	
	(b). For any $x\in X$, there is a sequence $x_n$ converging to $x$ in $X$ such that
	\[f(x)=\lim\limits_{n\to\infty} f_n(x_n).\]
\end{definition}

In this paper, we are interested in non-negative symmetric bilinear forms $(\mcE,\mcF)$ on $L^2(K,\mu)$, where the linear subspace $\mcF\subset L^2(K,\mu)$ is called the domain, $\mcE$ maps from $\mcF\times\mcF:=\{(f,g): f,g\in\mathcal F\}$ to $\mathbb{R}$. We say that the bilinear form $(\mcE,\mcF)$ is \textit {closed} if $\mcF$ is a Hilbert space equipped with the inner product $\mcE_1(f,g)=\mcE(f,g)+\int_K f(x)g(x)\mu(dx)$.

To apply $\Gamma$-convergence, it's more natural to consider the quadratic form (with extended real values) associated with the symmetric bilinear form $(\mcE,\mcF)$ (see \cite[Definition 11.7]{D}):
\[\mcE(f)=
\begin{cases}
	\mcE(f,f),&\text{ if }f\in \mcF,\\
	\infty,&\text{ if }f\notin \mcF,
\end{cases}\]
where we still use $\mcE$ to denote the quadratic form $L^2(K,\mu)\to [0,\infty]$ by a little abuse of the notation. Conversely, a functional $\mcE:L^2(K,\mu)\to [0,\infty]$ is a \emph{non-negative quadratic form} (with extended real values) if and only if\\
(a). $\mcE(0)=0$; \\
(b). $\mcE(tf)=t^2\mcE(f)$,  for any $f\in L^2(K,\mu)$ and any $t\in \mathbb{R}$;\\
(c). $\mcE(f+g)+\mcE(f-g)=2\mcE(f)+2\mcE(g)$ for any $f,g\in L^2(K,\mu)$.\\
Given such a functional, one can define a \emph{non-negative symmetric bilinear form} $(\mcE,\mcF)$ by $\mcF=\{f\in L^2(K,\mu):\mcE(f)<\infty\}$ and $\mcE(f,g)=\frac{1}{4}\big(\mcE(f+g)-\mcE(f-g)\big)$. So there is a simple one-to-one correspondence between quadratic forms and symmetric bilinear forms. For a proof of this fact, please refer to \cite[Proposition 11.9]{D} (where a stronger result is proved).

A crucial observation is that $(\mcE,\mcF)$ is closed if and only if the associated quadratic form $\mcE$ is \emph{lower-semicontinuous}. See \cite[Proposition 12.16]{D} for a proof. \vspace{0.2cm}

By some well known properties of $\Gamma$-convergence (See \cite[Proposition 6.8, Theorems 8.5 and 11.10]{D}), we know that  $\Gamma$-convergence on $L^2(K,\mu)$ is weak in the following sense.

\begin{proposition}\label{thm38}
	Let $\mcE_n$, $n\geq 1$  be a sequence of non-negative lower-semicontinuous quadratic forms on $L^2(K,\mu)$ (with extended real values). There is a subsequence $n_l$, $l\geq 1$ and a lower-semicontinuous non-negative quadratic form (with extended real values) $\mcE$ on $L^2(K,\mu)$ such that $\mcE_{n_l}$ $\Gamma$-converges to $\mcE$.
\end{proposition}

\begin{proof}[Proof of Theorem \ref{thm34}]
	First, we introduce the operators $Q_n,n\geq 0$, which will be useful for proving both (a) and (b). For $n\geq 0$, let
	\[\mathcal{M}_n=\big\{f\in L^2(K,\mu):f(x)=P_nf(w), \text{ for any }w\in W_n \text{ and a.e. }x\in \Psi_wK\big\},\]
	which is the space of functions whose restriction on each $n$-cell is a.e. a constant. Define $Q_n$ to be the orthogonal projection from $L^2(K,\mu)$ to $\mathcal{M}_n$, and we choose a nice version (for the proof of (a)) as follows:
\[Q_nf(x)=\max_{w\in W_n:x\in \Psi_wK} P_nf(w)\quad \hbox{ for every } x\in K.\]
It is routine to see that $Q_nf$ converges to $f$ in $L^2(K,\mu)$. In fact, if $f\in C(K)$, then $Q_nf$ converges uniformly to $f$, hence converges to $f$ in $L^2(K,\mu)$. For general $f\in L^2(K,\mu)$, for any $\varepsilon>0$, we choose $f'\in C(K)$ so that $\|f'-f\|_{L^2(K,\mu)}<\frac{\varepsilon}{3}$, and then for $n$ large enough $\|f-Q_nf\|_{L^2(K,\mu)}\leq \|f-f'\|_{L^2(K,\mu)}+\|f'-Q_nf'\|_{L^2(K,\mu)}+\|Q_nf-Q_nf'\|_{L^2(K,\mu)}<\varepsilon$. \vspace{0.2cm}

(a). For $f\in \bar{\mcF}$, we can see that $Q_nf$ converges uniformly. In fact, for each $x\in K$ and $m\geq 0$, $n\geq 1$, by Lemma \ref{lemma35} and \ref{lemma33}, we have
\begin{equation}\label{eqn3add1}
\begin{aligned}
\big |Q_{m+n}f(x)-Q_mf(x)\big|
\leq &\max_{w\in W_m,v\in W_{m + n}:x\in \Psi_{w}K\cap \Psi_v K}\big|P_{m+n}f(v)-P_mf(w)\big|\\
\leq &\max_{w\in W_m,\tau\in W_{n}:x\in \Psi_{w\cdot \tau}K}\big|P_{m+n}f(w\cdot \tau)-P_mf(w)\big|\\
&+ \max_{v, v'\in W_{m + n}: x\in \Psi_v K\cap \Psi_{v'}K}\big|P_{m+n}f(v)-P_{m+n}f(v')\big|\\
\leq &C_1\cdot \sqrt{N^{-n}\lambda_n\mathcal{D}_{m+n}(f)} + \sqrt{\mathcal{D}_{m + n}(f)}\\
\leq &C_2\cdot \sqrt{r^{-n}\mathcal{D}_{m+n}(f)}
\leq C_2\cdot \sqrt{r^m\sup_{n'\geq 1}r^{-n'}\mcD_{n'}(f)},
\end{aligned}
\end{equation}
for some constants $C_1, C_2>0$. Since $Q_nf$ converges to $f$ in $L^2(K,\mu)$, the uniform limit of $Q_nf$ is a version of $f$. So in the following we assume $f(x)=\lim\limits_{n\to\infty}Q_nf(x)$ for every $x\in K$. By letting $n\to\infty$ in (\ref{eqn3add1}), we get
\begin{equation}\label{eqn3add2}
\big|f(x)-Q_mf(x)\big|\leq C_2\cdot \sqrt{r^m\sup_{n\geq 1}r^{-n}\mcD_{n}(f)}\quad \hbox{ for every } x\in K.
\end{equation}
Now, consider $x\neq y\in K$ such that $d(x,y)<c_0 k^{-1}$ (Recall the constant $c_0$ in \textbf{(A3)}). Choose $m\geq 1$ so that $c_0k^{-m-1}\leq d(x,y)< c_0k^{-m}$, and $w,w'\in W_m$ so that $x\in \Psi_wK,Q_mf(x)=P_mf(w)$ and $y\in \Psi_{w'}K, Q_mf(y)=P_mf(w')$. Then, by \textbf{(A3)}, there is $w''\in W_m$ such that $\Psi_{w''}K\cap\Psi_wK\neq \emptyset$ and $\Psi_{w''}K\cap\Psi_{w'}K\neq \emptyset$, whence
\begin{equation}\label{eqn3add3}
\begin{aligned}
\big |Q_mf(x)-Q_mf(y)\big|&=\big|P_mf(w)-P_mf(w')\big |
\\&\leq \big|P_mf(w)-P_mf(w'')\big|+\big|P_mf(w'')-P_mf(w')\big|\\
&\leq 2\sqrt{\mcD_m(f)}\leq 2\cdot\sqrt{r^m\sup_{n\geq 1}r^{-n}\mcD_n(f)}.
\end{aligned}
\end{equation}
Combining (\ref{eqn3add2}), (\ref{eqn3add3}) and the fact $d(x,y)\geq c_0k^{-m-1}$, one immediately has
\[\big|f(x)-f(y)\big|^2\leq C_3\cdot r^m\sup_{n\geq 1}r^{-n}\mcD_n(f)\leq C_4\cdot d(x,y)^{-\frac{\log r}{\log k}}\sup_{n\geq 1}r^{-n}\mcD_n(f)\]
for some constants $C_3, C_4>0$ as desired. Finally, if $d(x,y)\geq c_0k^{-1}$, we have $|f(x)-f(y)|^2\leq C_5\cdot \sup_{n\geq 1}r^{-n}\mcD_n(f)$ for some $C_5>0$ by simply using (\ref{eqn3add2}) with $m=0$. \vspace{0.2cm}
	
(b). By Proposition \ref{thm38}, there is a subsequence $n_l$, $l\geq 1$ and a limit form $\bar{\mcE}$ on $L^2(K,\mu)$ so that $r^{-n_l}\mcD_{n_l}$ $\Gamma$-converges to $\bar{\mcE}$.
	
We now show that
\begin{equation}\label{eqn35}
C_6\cdot \sup_{n\geq 1} r^{-n}\mcD_n(f)\leq \bar{\mcE}(f)\leq C_7\cdot \liminf_{n\to\infty} r^{-n}\mcD_n(f)\quad\hbox{ for every } f\in L^2(K,\mu),
\end{equation}
for some constants $C_6,C_7>0$ independent of $f$. This will rely on the fact that $N^{-m}\sigma_m\asymp r^{-m}$,
which is a consequence of Lemma \ref{lemma33}. Clearly, by the definition of $\Gamma$-convergence, we have
\[\bar{\mcE}(f)\leq \liminf_{l\to\infty}r^{-n_l}\mcD_{n_l}(f)\leq 8C_8\cdot \liminf_{n\to\infty}r^{-n}\mcD_{n}(f),\]
where the second inequality is due to Lemma \ref{lemma36} and the fact that $r^{-n_l}\leq C_8\cdot r^{-n}(N^{n-n_l}\sigma^{-1}_{n-n_l})$ for some $C_8>0$ and all $n>n_l$.  For the other direction, we pick a sequence $f_l$, $l\geq 1$ that converges to $f$ in $L^2(K,\mu)$, and $\bar{\mcE}(f)=\lim\limits_{l\to\infty}r^{-n_l}\mcD_{n_l}(f_l)$. Then,
	\[\bar{\mcE}(f)=\lim\limits_{l\to\infty}r^{-n_l}\mcD_{n_l}(f_l)\geq \frac 18C_9\cdot \lim\limits_{l\to\infty}r^{-n}\mcD_n(f_l)=\frac18C_9\cdot r^{-n}\mcD_n(f)\quad \hbox{ for every } n\geq 1,\]
where the inequality is due to Lemma \ref{lemma36} and the fact that $r^{-n_l}\geq C_9\cdot r^{-n}N^{-n_l+n}\sigma_{n_l-n}$ for some $C_9>0$ and all $n_l>n$. Thus, we get (\ref{eqn35}).
	
Clearly $\bar{\mcF}=\{f\in L^2(K,\mu):\bar\mcE(f)<\infty\}$ by (\ref{eqn35}), and $\bar{\mcE}$ induces a non-negative symmetric bilinear form $(\bar{\mcE},\bar{\mcF})$ on $L^2(K,\mu)$. In addition, $(\bar{\mcE},\bar{\mcF})$ is closed as $\bar{\mcE}$ is lower-semicontinuous. It remains to show the Markov property, regularity and $D_4$-symmetry of $(\bar\mcE,\bar{\mcF})$. Then it follows from the regularity of $(\bar{\mcE},\bar{\mcF})$ that $\bar{\mcF}$ is dense in $L^2(K,\mu)$, so $(\bar{\mcE},\bar{\mcF})$ is a regular Dirichlet form.\vspace{0.2cm}
	
\textit{Markov property.} Let $f\in L^2(K,\mu)$, we need to show $\bar{\mcE}(\bar{f})\leq \bar{\mcE}(f)$ for $\bar{f}=(f\vee 0)\wedge 1$.

First, we will find a sequence $f_l\in \mathcal{M}_{n_l}$, $l\geq 1$ such that $f_l\to f$ in $L^2(K,\mu)$ and $\bar{\mcE}(f)=\lim\limits_{l\to\infty}r^{-n_l}\mcD_{n_l}(f_l)$. By $\Gamma$-convergence, there is a sequence $g_l\in L^2(K,\mu)$ so that $g_l\to f$ in $L^2(K,\mu)$ and $\bar{\mcE}(f)=\lim\limits_{l\to\infty}r^{-n_l}\mcD_{n_l}(g_l)$. It suffices to choose $f_l=Q_{n_l}g_l,l\geq 1$. In fact, we have $\mcD_{n_l}(f_l)=\mcD_{n_l}(g_l)$ immediately, and in addition, $f_l\to f$ in $L^2(K,\mu)$ is the consequence of the estimate
\[\begin{aligned}
\|f_l-f\|_{L^2(K,\mu)}=\|Q_{n_l}g_l-f\|_{L^2(K,\mu)}&\leq \|Q_{n_l}g_l-Q_{n_l}f\|_{L^2(K,\mu)}+\|Q_{n_l}f-f\|_{L^2(K,\mu)}\\
&\leq \|g_l-f\|_{L^2(K,\mu)}+\|Q_{n_l}f-f\|_{L^2(K,\mu)},
\end{aligned}\]
and the fact that $Q_nf\to f$ in $L^2(K,\mu)$ as $n\to\infty$.

Then, letting $\bar{f}_l=(f_l\vee 0)\wedge 1$, we have $\bar{f}_l\to \bar{f}$ in $L^2(K,\mu)$ as $\|\bar{f}_l-\bar{f}\|_{L^2(K,\mu)}\leq \|f_l-f\|_{L^2(K,\mu)}\hbox{ for every }l\geq 1$, and hence
\[\bar{\mcE}(\bar{f})\leq \liminf_{l\to\infty}r^{-n_l}\mcD_{n_l}(\bar{f}_l)\leq \liminf_{l\to\infty}r^{-n_l}\mcD_{n_l}(f_l)=\bar{\mcE}(f),\]
where we use the Markov property of $\mcD_{n_l}$ on $\mathcal{M}_{n_l}$.\vspace{0.2cm}
	
\textit{Regularity}. Since $\bar{\mcF}\subset C(K)$ by (a), it remains to show $\bar{\mcF}$ is dense in $C(K)$. We use the Stone-Weierstrass theorem of real valued functions on compact Hausdorff spaces (see \cite[Theorem 2.4.11]{Dr} for a rudimentary proof). We need to verify that $\bar{\mcF}$ is an algebra contained in $C(K)$ such that $\bar{\mcF}$ contains constants and separates points in $K$.
	
First, $\bar{\mcF}$ is an algebra. $\bar{\mcF}$ is a linear space, so we only need to show that $f\in \bar{\mcF},g\in \bar{\mcF}$ implies $f\cdot g\in \bar{\mcF}$. This is a consequence of the fact that $\bar{\mcF}\subset C(K)$, which implies that $Q_n(f)\cdot Q_n(g)\to f\cdot g$ in $L^2(K,\mu)$ (the convergence is also uniform indeed) and $\mcD_n\big(Q_n(f)\cdot Q_n(g)\big)\leq 2\|f\|^2_{L^\infty(K,\mu)} \mcD_n(g)+2\|g\|^2_{L^\infty(K,\mu)}\mcD_n(f)$, where in the second formula we use the observations $\|Q_nf\|_{L^\infty(K,\mu)}\leq \|f\|_{L^\infty(K,\mu)}$, $\|Q_ng\|_{L^\infty(K,\mu)}\leq \|g\|_{L^\infty(K,\mu)}$, $P_n\big(Q_n(f)\cdot Q_n(g)\big)=P_n\big(Q_n(f)\big)\cdot P_n\big(Q_n(g)\big)$ and the inequality $\sqrt{\mcD_n(u\cdot v)}\leq \|u\|_{l^\infty}\sqrt{\mcD_n(v)}+\|v\|_{l^\infty}\sqrt{\mcD_n(u)}\hbox{ for every } u,v\in l(W_n)$ (see \cite[Theorem 1.4.2 (ii)]{FOT}) with $\|u\|_{l^\infty}=\max_{w\in W_n}|u(w)|$. Then, by using $\Gamma$-convergence and equation (\ref{eqn35}), one has $\bar{\mcE}(f\cdot g)\leq 2C_6^{-1}\|f\|^2_{L^\infty(K,\mu)}\bar{\mcE}(g) +2C_6^{-1}\|g\|^2_{L^\infty(K,\mu)}\bar{\mcE}(f)$, hence $f\cdot g\in \bar{\mcF}$.
	
Second, $\bar{\mcF}$ clearly contains constant functions.
	
Finally, we show that for any $x\neq y\in K$, there is $f\in \bar{\mcF}$ such that $f(x)\neq f(y)$. We choose $w\in W_*$ so that $x\in \Psi_wK$ and $y\in \bigcup_{w'\in \mathcal{N}_w^c}\Psi_{w'}K$. For any $n>|w|$, we can find $f_n\in L^2(K,\mu)$ such that $f_n|_{\Psi_wK}=1$, $f_n|_{\bigcup_{w'\in \mathcal{N}_w^c}\Psi_{w'}K}=0$ and $\mcD_n(f_n)\leq R_{n-|w|}^{-1}$. Pick a subsequence $\{f_{n_l}\}_{l\geq 1}$ and $f\in L^2(K,\mu)$ such that $f_{n_l}$ weakly converges to $f$ in $L^2(K,\mu)$. Indeed, $f\in \bar{\mcF}$ since for any $m\geq 1$ we have $r^{-m}\mcD_m(f)=\lim\limits_{l\to\infty}r^{-m}\mcD_m(f_{n_l})\leq 8C_9^{-1}\liminf\limits_{l\to\infty}r^{-n_l}\mcD_{n_l}(f_{n_l}) \leq 8C_9^{-1}\liminf\limits_{l\to\infty}r^{-n_l}R^{-1}_{n_l - |w|}$, where the first inequality follows by a same reason as in the proof of part (a) with $C_9$ being the same constant. In addition, since $f\in \bar{\mcF}\subset C(K)$ (by part (a), which is already proved) and $f_{n_l}$ converges weakly to $f$ in $L^2(K,\mu)$, we have $f(x)=\lim\limits_{m\to\infty}N^m\int_{\Psi_{w^{(m)}}K}f(z)\mu(dz)=\lim\limits_{m\to\infty}\lim\limits_{l\to\infty}N^m\int_{K}1_{\Psi_{w^{(m)}}K}(z)f_{n_l}(z)\mu(dz)=1$, where we denote the indicator function on $A\subset K$ by $1_A$, and we choose $w^{(m)}\in w\cdot W_{m-|w|}$ such that $x\in \Psi_{w^{(m)}}K$ for each $m\geq |w|$. Similarly $f(y)=0$. So $f$ separates $x,y$.\medskip

\textit{$D_4$-symmetry.} In fact, this follows from that $\mcD_n$ is $D_4$-symmetric for each $n\geq 1$. Let us fix $f\in \bar{\mcF}$ and $\Gamma\in \mathscr{G}$. Then, we can see that $f\circ\Gamma\in\bar{\mcF}$ since $\sup_{n\geq1}r^{-n}\mcD_n(f\circ\Gamma)=\sup_{n\geq1}r^{-n}\mcD_n(f)<\infty$. Moreover, by the $\Gamma$-convergence of $r^{-n_l}\mcD_{n_l}$ to $\bar{\mcE}$, we can pick $f_l\in L^2(K,\mu),l\geq 1$ converging to $f$ in $L^2(K,\mu)$ so that $\bar{\mcE}(f)=\lim\limits_{l\to\infty}r^{-n_l}\mcD_{n_l}(f_{l})$, and using the $D_4$-symmetry of $\mcD_{n_l}$, we see
\[
\bar{\mcE}(f\circ \Gamma)\leq\liminf\limits_{l\to\infty}r^{-n_l}\mcD_{n_l}(f_{l}\circ \Gamma)=\lim\limits_{l\to\infty}r^{-n_l}\mcD_{n_l}(f_{l})=\bar{\mcE}(f).
\]
By a same argument, we also see that $\bar{\mcE}(f)\leq\bar{\mcE}(f\circ\Gamma)$. So it follows that $\bar\mcE(f\circ\Gamma)=\bar\mcE(f)$.
\end{proof}

\section{An extension algorithm and its application to \textbf{(B)}}\label{sec4}
The essential difficulty of \textbf{(B)} for $\USC$ is caused by the unconstrained style of neighbourhoods $\mathcal{N}_w$, $w\in W_*$, which may be of infinitely many different patterns. Recall that in \cite{KZ}, the slightly different condition \textbf{(B2)} there is verified by the ``Knight moves'' method due to Barlow and Bass \cite{BB} for a limited range of local symmetric fractals including the classical Sierpinski carpets. However, there are very limited results about slide moves and corner moves in the $\USC$ setting, when the process travels near $\partial_0K$. Also, we can not construct functions of controllable energy simply by only using symmetry as before.

We will take two steps to overcome the difficulty. The argument will be purely analytic.

Firstly, we will provide a lower bound estimate of $R_n(W_{n,2},W_{n,4})$ in terms of $\sigma_n$, i.e. $R_n(W_{n,2}, W_{n,4})\geq C\cdot N^{-n}\sigma_n$ for some $C>0$ independent of $n$. We will take advantage of the strongly recurrent setting in this step. This will be done in Lemma \ref{lemma48}.

Secondly, we will estimate $R_n$ from below by constructing a function $g_w$ for any $w\in W_*\setminus\{\emptyset\}$, which belongs to $l(W_{|w|+n})$, takes value $1$ on $w\cdot W_n$, $0$ on $\mathcal{N}_w^c\cdot W_n$, and has energy controlled by $N^{n}\sigma^{-1}_n$. To fulfill this, first we will develop an extension algorithm by gluing affine copies of functions in $l(W_m)$ with energy controlled by $R_m^{-1}(W_{m,2}, W_{m,4})$, $m\leq n$ (supplied by the first step),  to construct functions in $l(W_n)$ which have nice boundary conditions on $\partial W_n$ and energies controlled by $N^{n}\sigma^{-1}_n$; then again by gluing the affine copies of the obtained functions, and using cutting without increasing energy, we will get the desired function $g_w$ in $l(W_{|w|+n})$. This step will be accomplished in Proposition  \ref{prop46}.

The condition \textbf{(B)} will then follow by combining these two steps.

We start this section with an introduction to the extension algorithm as well as its application to the condition \textbf{(B)}, and leave the estimate of $R_n(W_{n,2},W_{n,4})$ to the last part of this section.


\subsection{Building bricks} \label{sec41}

We will break the fractal $K$ near boundary into infinitely many pieces (building bricks), and glue functions on them together to arrive desired boundary values.

Since all the four boundary lines $L_i,1\leq i\leq 4$ are the same by $\mathscr{G}$-symmetry, we will focus on the bottom line segment $L_1=\overline{q_1,q_2}$. Recall that $W_{m,1}\subset W_{m}$ stands for the collection of $m$-cells that have non-empty intersection with $L_1$, i.e. $W_{m,1}=\{w\in W_m:\Psi_wK\cap L_1\neq \emptyset\}=\{1,2,\cdots, k\}^m$. In addition, the cells $\Psi_iK$ is arranged so that $\Psi_{i-1}K$ is on the left of $\Psi_iK$ for $2\leq i\leq k$.

\begin{definition}\label{def41}
Let $K$ be a $\USC$.
	
(a). For $m\geq 1$, define $T_{m,K}=\bigcup_{w\in W_{m,1}} \Psi_wK$, and call it a \emph{level-$m$ lower wall} of $K$.
	
(b). Let $B_K=\text{cl}\big(T_{1,K}\setminus T_{2,K}\big)$. Here ``cl'' means closure (with respect to the metric $d$).
	
(c). Write $W_{*,1}=\bigcup_{m=0}^\infty W_{m,1}$. For each $w\in W_{*,1}$, call $\Psi_wB_K$ a \emph{level-$|w|$ building brick}.
\end{definition}

We can decompose $T_{1,K}$ into infinitely many building bricks, or into finitely many building bricks together with a lower wall $T_{n,K}$,
\begin{equation}\label{eqn41}
T_{1,K}=\big(\bigcup_{w\in W_{*,1}}\Psi_wB_K\big)\bigcup L_1=\big(\bigcup_{m=0}^{n-2}\bigcup_{w\in W_{m,1}}\Psi_wB_K\big)\bigcup T_{n,K},\quad\text{ for }n\geq 2.
\end{equation}
\begin{figure}[htp]
\includegraphics[width=6cm]{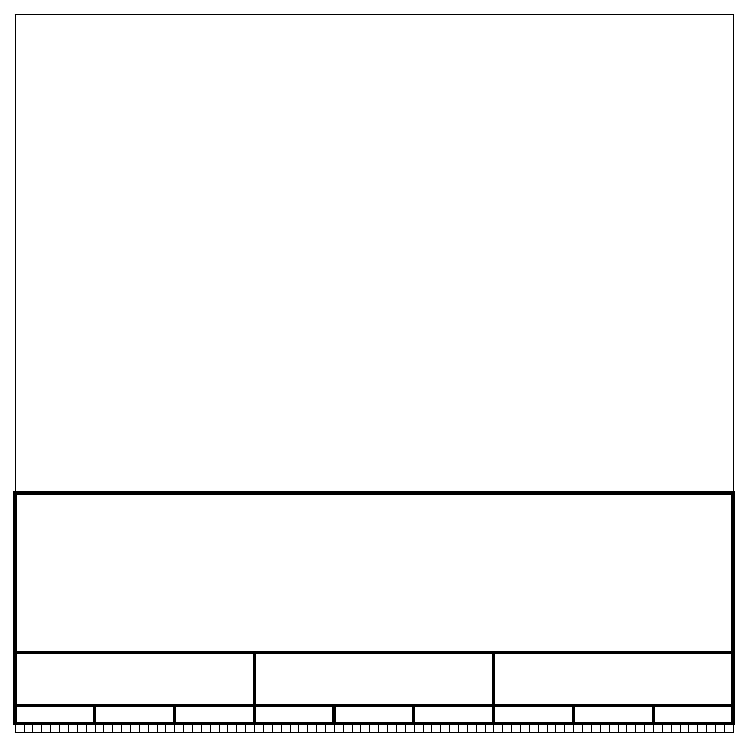}
\begin{picture}(0,0)
\put(-96,35){$B_K$}\put(1,27){$T_{1,K}$}
\end{picture}
\caption{A finite decomposition of $T_{1,K}$ with $n=4$.}
\label{figure2}
\end{figure}
See Figure \ref{figure2} for an illustration. In this section, since we are considering the discrete energies $\mcD_n$ on cell graphs, we focus on the finite decomposition of $T_{1,K}$.

We introduce some new notations for cell graph case.

\begin{definition}\label{def42}
Let $K$ be a $\USC$.
	
(a). For $m\geq 1$ and $n\geq m$, we define $\bar{T}_{m,n}=\{w\in W_n:\Psi_wK\subset T_{m,K}\}$.
	
(b). For $n\geq 2$, we define $\bar{B}_{n}=\{w\in W_n:\Psi_wK\subset B_K\}$.
	
\noindent In particular, $\bar{T}_{n,n}=W_{n,1}$.
\end{definition}

We can rewrite the second decomposition in (\ref{eqn41}) as
\begin{equation}\label{eqn42}
	\bar{T}_{1,n}=\big(\bigsqcup_{m=0}^{n-2}\bigsqcup_{w\in W_{m,1}}w\cdot \bar{B}_{n-m}\big)\bigsqcup \bar{T}_{n,n},\qquad n\geq 2,
\end{equation}
where we use the notation ``$\bigsqcup$'' to emphasize that the unions under consideration are disjoint.

\subsection{Functions with linear boundary values} \label{sec42}

We will use the decomposition (\ref{eqn42}) to construct functions in $l(W_n)$, with good boundary condition on $\partial W_n$, $n\geq 2$. For convenience, in the remaining of this section, for any $w\in W_n,n\geq 1$ and $\Gamma\in \mathscr{G}$, by a little abuse of notation, we let $\Gamma(w)$ denote the unique element in $W_n$ so that  $\Psi_{\Gamma(w)}K=\Gamma(\Psi_wK)$. We will say a function $f\in l(W_n)$ is \emph{$\Gamma$-symmetric} if $f(w)=f(\Gamma (w))$ for all $w\in W_n$.

To start with, let us define a \textit{building brick function} $b_n$ on $\bar{B}_n,n\geq 2$. This takes several steps.\vspace{0.2cm}

\textbf{\textit{Step b.1}}. For $n\geq 1$, let $h_n$ be the unique function on $W_n$ such that
\[\begin{cases}
	&h_n|_{W_{n,4}}=0,\quad h_n|_{W_{n,2}}=1,\\
	&\mcD_n(h_n)=\min\big\{\mcD_n(f):f|_{W_{n,4}}=0, f|_{W_{n,2}}=1, f\in l(W_n)\big\}.
\end{cases}\]
{The existence and uniqueness of $h_n$ is explained in Remark 1 after Definition \ref{def31}. In addition, $h_n$ is $\Gamma_v$-symmetric, i.e. $h_n=h_n\circ \Gamma_v$. In fact, if $h_n\neq h_n\circ \Gamma_v$, then $\mcD_n(\frac{1}{2}h_n+\frac{1}{2}h_n\circ \Gamma_v)= \frac{1}{2}\mcD_n(h_n)+\frac{1}{2}\mcD_n(h_n\circ \Gamma_v)-\mcD_n(\frac{1}{2}h_n-\frac{1}{2}h_n\circ \Gamma_v)<\mcD_n(h_n)$, $(\frac{1}{2}h_n+\frac{1}{2}h_n\circ \Gamma_v)|_{W_{n,4}}=0$ and $(\frac{1}{2}h_n+\frac{1}{2}h_n\circ \Gamma_v)|_{W_{n,2}}=1$, which is a contradiction to the definition of $h_n$. The inequality is due to the facts that $\mcD_n(f)=\mcD_n(f\circ \Gamma_v)\hbox{ for every } f\in l(W_n)$ and $\frac{1}{2}h_n-\frac{1}{2}h_n\circ \Gamma_v$ is not a constant (since $(\frac{1}{2}h_n-\frac{1}{2}h_n\circ \Gamma_v)|_{W_{n,4}}=0$ and $(\frac{1}{2}h_n-\frac{1}{2}h_n\circ \Gamma_v)\neq 0$). Also by the Markov property, we have $0\leq h_n\leq 1$ since $\mcD_n\big((h_n\vee 0)\wedge 1\big)\leq \mcD_n(h_n)$, so $\|h_n\|_{l^{\infty}(W_n)} = 1$.}

For $n\geq 1$, let $A_n=\{w\in W_n:w\sn \bar T_{1,n}\}\cup \bar{T}_{1,n}$, and define $h'_n$ to be the unique function on $W_n$ such that
\[\begin{cases}
&h'_n|_{W_{n,3}}=1,\quad h'_n|_{A_n}=0,\\
&\mcD_n(h'_n)=\min\big\{\mcD_n(f):f|_{W_{n,3}}=1, f|_{A_n}=0, f\in l(W_n)\big\}.
\end{cases}\]
The existence and uniqueness of $h_n'$ is explained in Remark 1 after Definition \ref{def31}. Recall that $\Gamma_h\in \msG$ is defined by $\Gamma_h(x_1,x_2)=(1-x_1,x_2)$ for $x_1,x_2\in \square$, and we also denote $\Gamma_h:W_n\to W_n$ by $\Gamma_h(w)=v$ if $\Gamma_h(\Psi_wK)=\Psi_vK$. By a same argument as before, we can see that $h'_n$ is $\Gamma_h$-symmetric, i.e. $h'_n=h'_n\circ \Gamma_h$. In fact, if $h'_n\neq h'_n\circ \Gamma_h$, then $\mcD_n(\frac{1}{2}h'_n+\frac{1}{2}h'_n\circ \Gamma_h)=\frac{1}{2}\mcD_n(h'_n)+\frac{1}{2}\mcD_n(h'_n\circ \Gamma_h)-\mcD_n(\frac{1}{2}h'_n-\frac{1}{2}h'_n\circ \Gamma_h)<\mcD_n(h'_n)$, $(\frac{1}{2}h'_n+\frac{1}{2}h'_n\circ \Gamma_h)|_{W_{n,3}}=1$ and $(\frac{1}{2}h'_n+\frac{1}{2}h'_n\circ \Gamma_h)|_{A_n}=0$, which is a contradiction to the definition of $h'_n$. Still by the Markov property, $\|h'_n\|_{l^{\infty}(W_n)} = 1$.\vspace{0.2cm}

 We will return to the energy estimate of $h_n$ later in Subsection {\ref{sec43}}. Currently we will estimate energies of other functions in terms of $\mcD_n(h_n)$. In particular, in Step b.1, one has the energy of $h_n'$ well controlled by the energy of $h_{n-1}$ for $n\geq 2$.
\begin{lemma}\label{lemma4add1}
	For $n\geq 2$, $\mcD_n(h'_n)\leq 2k\cdot \mcD_{n-1}(h_{n-1})$.
\end{lemma}
\begin{proof}
We define a function $\tilde{h}_n'$ in $l(W_n)$ as follows:
\[\tilde{h}_{n}'(i\cdot w)=
\begin{cases}
h_{n-1}\circ \Gamma_{r_3}(w),&\text{ if }i\in W_{1,3},w\in W_{n-1},\\
0,&\text{ if }i\notin W_{1,3},w\in W_{n-1}.\\	
\end{cases}\]
Then $\tilde{h}_{n}'|_{W_{n,3}}=1$ and $\tilde{h}_n'|_{A_n}=0$. In addition, by the $\Gamma_v$-symmetry of $h_{n-1}$ (see the discussion in Step b.1), one can check directly that $\tilde{h}_n'(i\cdot w)=\tilde{h}_n'\big(j\cdot\Gamma_h(w)\big)$ for $i,j\in W_{1,3}$ with $i-j=1$ and $w\in W_{n-1,2}$. As a consequence, the cross energy $\mcD_{n,i\cdot W_{n-1,2}\cup j\cdot W_{n-1,4}}(\tilde{h}_n')- \mcD_{n,i\cdot W_{n-1,2}}(\tilde{h}_n') -\mcD_{n,j\cdot W_{n-1,4}}(\tilde{h}_n')$ between $i\cdot W_{n-1,2}$ and $j\cdot W_{n-1,4}$ satisfies
\[
\begin{aligned}
&\mcD_{n,i\cdot W_{n-1,2}\cup j\cdot W_{n-1,4}}(\tilde{h}_n')- \mcD_{n,i\cdot W_{n-1,2}}(\tilde{h}_n') -\mcD_{n,j\cdot W_{n-1,4}}(\tilde{h}_n')\\
=&\mcD_{n,i\cdot W_{n-1,2}}(\tilde{h}_n')+\mcD_{n,j\cdot W_{n-1,4}}(\tilde{h}_n') \leq \mcD_{n-1}(h_{n-1}),
\end{aligned}
\]
and thus
\[\begin{aligned}
\mcD_n(h'_n)\leq \mcD_n(\tilde{h}'_n)=&\sum_{i\in W_{1,3}}\mcD_{n,i\cdot W_{n-1}}(\tilde{h}_n')\\&+\sum_{i,j\in W_{1,3}\atop i-j=1}\big(\mcD_{n,i\cdot W_{n-1,2}\cup j\cdot W_{n-1,4}}(\tilde{h}_n')- \mcD_{n,i\cdot W_{n-1,2}}(\tilde{h}_n') -\mcD_{n,j\cdot W_{n-1,4}}(\tilde{h}_n')\big)\\
\leq &2k\cdot\mcD_{n-1}(h_{n-1}).
\end{aligned}\]
\end{proof}\vspace{0.2cm}

\textbf{\textit{Step b.2}}. Next, for $n\geq 2$, we define $b_n^{(1)}\in l(\bar{B}_n)$ and $b_n^{(2)}\in l(\bar{B}_n)$ as
\[b_n^{(1)}=h_n|_{\bar{B}_n},\]
and
\[b_{n}^{(2)}(i\cdot w)=\frac{i-1}{k}+\frac{1}{k}h_{n-1}(w)\quad \hbox{ for every } 1\leq i\leq k,  w\in W_{n-1}\setminus \bar{T}_{1,n-1}.\]

\begin{lemma}\label{lemma4add2}
(a). For $n\geq 2$, we have $b_n^{(1)}|_{W_{n,4}\cap\bar B_n}\equiv 0$ and $b_n^{(1)}|_{W_{n,2}\cap\bar B_n}\equiv 1$.

(b). For $n\geq 2$ and $1\leq i\leq k$, we have
\[b_n^{(2)}|_{(i\cdot W_{n-1,4})\cap \bar{B}_n}\equiv\frac{i-1}{k},\quad b_n^{(2)}|_{(i\cdot W_{n-1,2})\cap \bar{B}_n}\equiv\frac{i}{k}.\]

(c). For $n\geq 2$, we have $\mcD_{n,\bar{B}_n}(b_n^{(1)})\leq \mcD_{n}(h_n)$ and $\mcD_{n,\bar{B}_n}(b_n^{(2)})\leq \frac{1}{k}\cdot\mcD_{n-1}(h_{n-1})$.
\end{lemma}
\begin{proof}
It is straightforward to check (a) and (b), noticing that $h_{n}|_{W_{n},4}=0$ and $h_{n}|_{W_{n},2}=1$.

(c). There is nothing to say about $\mcD_{n,\bar{B}_n}(b_n^{(1)})$. For $b_n^{(2)}$, by using (b), we can see that $b_n^{(2)}(v)=b_n^{(2)}(v')$ for all $v,v'\in \bar{B}_n$ such that $v\sn v'$ and  $v_1\neq v'_1$ ($v_1,v'_1$ are the leading symbols of $v,v'$ respectively), and thus
\[\mcD_{n,\bar{B}_n}(b_n^{(2)})=\sum_{i=1}^k\mcD_{n-1,W_{n-1}\setminus \bar{T}_{1,n-1}}(\frac{i-1}{k}+\frac{1}{k}h_{n-1})\leq \frac{1}{k}\cdot \mcD_{n-1}(h_{n-1}).\]
\end{proof}

\textbf{\textit{Step b.3}}. Lastly, for $n\geq 2$, we define $b_n\in l(\bar B_n)$ as
\[b_n=b_n^{(1)}\cdot h''_n+b^{(2)}_n\cdot (1-h''_n),\]
where $h''_n\in l(\bar B_n)$ is defined as
\[h''_n(i\cdot w)=h'_{n-1}(w)\quad\hbox{ for every } 1\leq i\leq k, w\in W_{n-1}\setminus\bar T_{1,n-1}.\]

We include the information about boundary values (including left boundary $W_{n,4}\cap \bar B_n$, right boundary $W_{n,2}\cap \bar B_n$, upper boundary $W_{1,1}\cdot W_{n-1,3}$ and lower boundary $W_{1,1}\cdot (A_{n-1}\setminus \bar T_{1,n-1})$) and energy estimate of the building brick function $b_n\in l(\bar B_n)$ in the following Lemma \ref{lemma43}.

\begin{lemma}\label{lemma43}
Let $n\geq 2$ and $b_n\in l(\bar{B}_n)$ be the function defined through Step b.1 to b.3.
	
(a). $b_n|_{W_{n,4}\cap \bar B_n}=0$,  $b_n|_{W_{n,2}\cap \bar B_n}=1$.
	
(b). $b_n(w)=h_n(w)\hbox{ for } w\in W_{1,1}\cdot W_{n-1,3}$.
	
(c). $b_n(i\cdot w)=\frac{i-1}{k}+\frac{1}{k}h_{n-1}(w)\hbox{ for } 1\leq i\leq k, \, w\in A_{n-1}\setminus \bar T_{1,n-1}$.
	
(d). $\mcD_{n,\bar{B}_n}(b_n)\leq C\cdot  \big(\mcD_{n}(h_{n})+\mcD_{n-1}(h_{n-1})+\mcD_{n-1}(h'_{n-1})\big)$, where $C>0$ is independent of $n$.
\end{lemma}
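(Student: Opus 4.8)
The statement to prove is Lemma~\ref{lemma43}, which records four properties of the building brick function $b_n$. Parts (a)--(c) are about boundary values and explicit restrictions of $b_n$ on specific sub-collections of $\bar B_n$, while part (d) is the quantitative energy estimate. The plan is to unwind the three-step definition of $b_n$ carefully, tracking which of the pieces $b_n^{(1)}$, $b_{n-1}^{(2)}$, and the ``ramp'' term $\frac{i-1}{k}$ contributes on each relevant part of $\bar B_n$, and then to bound the energy piece by piece.

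For (a)--(c), I would first note that $\bar B_n$, being made of cells inside $B_K=\text{cl}(T_{1,K}\setminus T_{2,K})$, is naturally indexed by pairs $(i,w)$ with $1\le i\le k$ and $w\in W_{n-1}\setminus\bar T_{1,n-1}$ (the ``$i$'' picks out which of the $k$ bottom-row first-level cells we are in, and $w$ describes the position within that cell above the second row). On $W_{n,4}\cap\bar B_n$ one has $i=1$ and $w$ lying on the left edge, so the ramp term is $0$, and I would check that $h_n$ vanishes there (it is $0$ on $W_{n,4}$) while the correction $\frac1k b_{n-1}^{(2)}(w)$ also vanishes because $b_{n-1}^{(2)}=h_{n-1}(1-h'_{n-1})$ and $h_{n-1}$ vanishes on $W_{n-1,4}$; similarly on $W_{n,2}\cap\bar B_n$ we get the constant $1$. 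Part (b) concerns $w\in W_{1,1}\cdot W_{n-1,3}$: here, by definition of $h'_{n-1}$ (which equals $1$ on $W_{n-1,3}$) one gets $b_n^{(1)}(i\cdot w)=h_n(i\cdot w)\cdot 1$, and the $b_{n-1}^{(2)}$ term together with the ramp should combine back to exactly $h_n$ — the point being that on the top edge of a bottom cell the function $b_n$ matches $h_n$, which is what glues the brick to the wall above. Part (c) is the analogous computation on $A_{n-1}$, where $h'_{n-1}$ vanishes so $b_n^{(1)}=0$ and $b_{n-1}^{(2)}(w)=h_{n-1}(w)$, giving $b_n(i\cdot w)=\frac{i-1}{k}+\frac1k h_{n-1}(w)$. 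These are all direct substitutions; the only care needed is to keep the index conventions \eqref{eqn23} straight and to confirm that the various edge-collections $W_{n,j}$ intersect $\bar B_n$ in the claimed index sets.

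For (d), the strategy is to split the Dirichlet sum $\mcD_{n,\bar B_n}(b_n)$ into contributions from pairs of neighbouring cells and to use that $b_n$ is, on each of the $k$ ``columns'', an affine-in-$i$ combination of the three model functions $h_n$, $h_{n-1}$, $h'_{n-1}$ (pulled back by the level-$1$ maps) plus a term depending only on $i$. Neighbour pairs within a single column $i$ contribute to $\mcD_n(h_n)$, $\mcD_{n-1}(h_{n-1})$, $\mcD_{n-1}(h'_{n-1})$ after the scaling; neighbour pairs across columns $i,i+1$ contribute a bounded number of terms of the form $(b_n(i\cdot w)-b_n((i+1)\cdot w'))^2$, and because of the $\frac{i-1}{k}$ offset these differences are $O(1)$ times differences of the model functions plus $O(k^{-2})$, with only $O(1)$ many such pairs per scale. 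Using $(a+b)^2\le 2a^2+2b^2$ repeatedly and absorbing constants into $C=C(k)$ gives the bound. The main obstacle — and the place I would spend the most care — is the cross-column term: one must verify that the building bricks in adjacent columns are glued along a segment where $b_n^{(1)}$ and $b_{n-1}^{(2)}$ take compatible (indeed essentially equal) values, so that the discrepancy really is controlled by the energies of $h_n,h_{n-1},h'_{n-1}$ and not by some uncontrolled boundary mismatch; this is exactly why Step~b.2 multiplies $h_n$ by $h'_{n-1}$ (resp.\ $1-h'_n$) rather than gluing $h_n$ naively. I expect parts (a)--(c) to be routine bookkeeping and part (d) to be a short but slightly delicate neighbour-counting argument.
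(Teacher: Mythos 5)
The paper states this lemma without a written proof (it is meant to be read off directly from Steps b.1--b.3), and your plan is the same direct verification; the trouble is that your write-up skips precisely the computations that carry the content, and as written they do not come out as you assert. The left-edge half of (a) is indeed trivial (all three terms vanish), but the right-edge half is not ``similar'': for $k\cdot w\in W_{n,2}\cap\bar B_n$ one has $h_n(k\cdot w)=1$ and $h_{n-1}(w)=1$, so the three terms of Step b.3 are $h'_{n-1}(w)$, $\tfrac1k\big(1-h'_{n-1}(w)\big)$ and $\tfrac{k-1}{k}$, whose sum is $1+\tfrac{k-1}{k}h'_{n-1}(w)$, and $h'_{n-1}$ does not vanish on all of $W_{n-1,2}\setminus\bar T_{1,n-1}$ (at the top-right corner cell it equals $1$). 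Likewise for (b): on $W_{1,1}\cdot W_{n-1,3}$ one has $h'_{n-1}(w)=1$, hence $b_n^{(1)}(i\cdot w)=h_n(i\cdot w)$ and $b_{n-1}^{(2)}(w)=0$, but the ramp $\tfrac{i-1}{k}$ does not disappear, so the quoted recipe yields $h_n(i\cdot w)+\tfrac{i-1}{k}$, not $h_n(i\cdot w)$. The identities (a)--(b) hold exactly when the ramp carries the weight $1-h'_{n-1}(w)$, i.e.\ when the construction is read as the interpolation $b_n(i\cdot w)=h_n(i\cdot w)\,h'_{n-1}(w)+\big(\tfrac{i-1}{k}+\tfrac1k h_{n-1}(w)\big)\big(1-h'_{n-1}(w)\big)$ (this still gives (c), since $h'_{n-1}=0$ on $A_{n-1}$, and makes both halves of (a) come out). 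Checking that cancellation is the whole point of (a)--(b); saying the remaining terms ``should combine back to exactly $h_n$'' is exactly the step left unproved, and for the formula you quote it is false.

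For (d), the within-column part of your plan is sound (product rule, $0\le h_n,h_{n-1},h'_{n-1}\le 1$, each $W_{n-1}$-edge reused at most $k$ times), but the cross-column step fails on two counts. First, the interface between columns $i$ and $i+1$ carries on the order of $k^{n-1}$ neighbouring pairs (one for essentially each right-edge cell of $W_{n-1}$ above the bottom row), not $O(1)$; an additive $O(k^{-2})$ slack per pair would therefore contribute on the order of $k^{n-3}$, which is incompatible with (d), whose right-hand side must stay bounded in $n$ (indeed $\mcD_n(h_n)\le C\,N^n\sigma_n^{-1}\le C'(N/k^2)^n$ by Lemma \ref{lemma48} together with (\ref{eqn32})--(\ref{eqn33}), and this is what Corollary \ref{coro45} needs). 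Second, there should be no additive error at all, and removing it needs an ingredient you never invoke: across $x_1=i/k$ the two values are $h_n(i\cdot w)h'_{n-1}(w)+\tfrac{i}{k}\big(1-h'_{n-1}(w)\big)$ and $h_n\big((i{+}1)\cdot w'\big)h'_{n-1}(w')+\tfrac{i}{k}\big(1-h'_{n-1}(w')\big)$ (using $h_{n-1}=1$ on $W_{n-1,2}$, $=0$ on $W_{n-1,4}$), so besides the $h_n$-edge difference one must control $\big|h'_{n-1}(w)-h'_{n-1}(w')\big|$ with $w\in W_{n-1,2}$ and $w'\in W_{n-1,4}$, which are not graph neighbours. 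The missing mechanism is the reflection symmetry: by uniqueness of the minimizers, $h'_{n-1}\circ\Gamma_h=h'_{n-1}$ and $h_{n-1}\circ\Gamma_h=1-h_{n-1}$, so $h'_{n-1}(w')=h'_{n-1}(\Gamma_h w')$ with $\Gamma_h w'$ a right-edge cell at the same or an adjacent height as $w$; the mismatch is then zero or a genuine edge difference, and summing with bounded multiplicity produces the $\mcD_{n-1}(h'_{n-1})$ and $\mcD_{n-1}(h_{n-1})$ terms in (d). Without this symmetry argument (or a substitute), the cross-column sum in your sketch is not controlled.
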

\begin{proof}
(a). One can see that $b^{(i)}_n|_{W_{n,4}\cap \bar B_n}=0$ and $b^{(i)}_n|_{W_{n,2}\cap \bar B_n}=1$ for $i=1,2$ from Lemma \ref{lemma4add2} (a), (b). Thus (a) follows immediately.
	
(b) and (c). By the definition of $h''_n$, one can see
\[h''_n|_{W_{1,1}\cdot W_{n-1,3}}=1,\quad h''_n|_{W_{1,1}\cdot (A_{n-1}\setminus \bar T_{1,n-1})}=0.\]
So we have
\[b_n|_{W_{1,1}\cdot W_{n-1,3}}=b_n^{(1)}|_{W_{1,1}\cdot W_{n-1,3}},\quad b_n|_{W_{1,1}\cdot (A_{n-1}\setminus \bar T_{1,n-1})}=b_n^{(2)}|_{W_{1,1}\cdot (A_{n-1}\setminus \bar T_{1,n-1})}.\]
Then (b) follows immediately from the definition of $b_n^{(1)}$, and (c) follows immediately from the definition of $b_n^{(2)}$.

(d). First, by the $\Gamma_h$-symmetry of $h'_{n-1}$ (see the discussion in Step b.1), using the same argument as that in the proof of Lemma \ref{lemma4add1}, we have $\mcD_{n,\bar B_n}(h''_n)\leq 2k\mcD_{n-1}(h'_{n-1})$. Then, noticing that $\|b_n^{(1)}\|_{l^\infty(\bar{B}_n)}=1,\|b_n^{(2)}\|_{l^\infty(\bar{B}_n)}=1$ and $\|h''_n\|_{l^\infty(\bar{B}_n)}=1$, we have (d) holds by using Lemma \ref{lemma4add2} (c) and the well known inequality (see \cite[Theorem 1.4.2]{FOT}) $\sqrt{\mcD_{n,\bar{B}_n}(f\cdot g)}\leq \|f\|_{l^\infty(\bar{B}_n)}\sqrt{\mcD_{n,\bar B_n}(g)}+\|g\|_{l^\infty(\bar{B}_n)}\sqrt{\mcD_{n, \bar B_n}(f)}$ for every $f,g\in l(\bar{B}_n)$.
\end{proof}\vspace{0.2cm}

Next, we use the building brick functions to build a function $f_n\in l(W_n)$, whose restriction to the boundary is  ``linear''. In addition, We will show that $\mcD_n(f_n)$ has an upper bound estimate in terms of $\mcD_m(h_m),1\leq m\leq n$.

\begin{lemma}\label{lemma44}
Let $u\in l(K)$ defined by $u(x_1,x_2)=x_1\hbox{ for every } x=(x_1,x_2)\in K$. Then there is $f_n\in l(W_n)$ such that $f_n|_{\partial W_n}=P_n u|_{\partial W_n}$ and
\begin{equation}\label{eqn43}
\mcD_n(f_n)\leq C\cdot\big(k^{-n}+\sum_{m=0}^{n-1} k^{-m}\mcD_{n-m}(h_{n-m})\big),
\end{equation}
for some constant $C>0$ independent of $n\geq 2$.
\end{lemma}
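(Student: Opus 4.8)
The plan is to build $f_n$ by gluing together the building brick functions $b_m$ supplied by Lemma \ref{lemma43}, using the finite decomposition (\ref{eqn42}) of the lower wall $\bar T_{1,n}$, and then exploiting the $\msG$-symmetry of $K$ to handle the other three sides of $\partial W_n$. First I would note that it suffices to construct a function $g_n\in l(\bar T_{1,n})$ whose values on $W_{n,1}\cup W_{n,2}$ already agree with $P_n u$ (so $g_n=0$ on $W_{n,4}\cap\bar T_{1,n}$, $g_n=1$ on $W_{n,2}\cap\bar T_{1,n}$, and $g_n(w)=P_nu(w)$ on $W_{n,1}$), and whose values on the inner boundary $A_n=\{w:w\sn\bar T_{1,n}\}$ match the one-dimensional harmonic profile $h_n$; then reflecting/rotating this "collar" function around the four sides and interpolating appropriately in the interior gives $f_n$ with $f_n|_{\partial W_n}=P_nu|_{\partial W_n}$, at the cost of only a bounded multiplicative constant in energy. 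The symmetry hypothesis in Definition \ref{def21} is exactly what makes the four collars compatible.

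The core construction is the vertical gluing inside the wall. Using (\ref{eqn42}), $\bar T_{1,n}=\big(\bigcup_{l=0}^{n-2}\bigcup_{w\in W_{l,1}} w\cdot \bar B_{n-l}\big)\cup\bar T_{n,n}$. On each level-$l$ brick $w\cdot\bar B_{n-l}$ (with $w\in W_{l,1}$) I place the affine copy $x\mapsto k^{-l}b_{n-l}(\cdot)+(\text{horizontal offset determined by }w)$ of the building brick function; since Lemma \ref{lemma43}(a) gives $b_m=0$ on the left edge and $b_m=1$ on the right edge, horizontally adjacent bricks at the same level glue with no extra boundary energy, and the horizontal offsets telescope correctly so that the left/right edges of the whole wall carry the values $0$ and $1$. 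The vertical interface between the level-$l$ bricks and the level-$(l+1)$ bricks sitting above them is exactly where Lemma \ref{lemma43}(b),(c) enter: the top profile of $b_{n-l}$ restricted to $W_{1,1}\cdot W_{n-l-1,3}$ equals $h_{n-l}$, and the bottom profile of the scaled $b_{n-l-1}$ on $A_{n-l-1}$ is $\tfrac1k h_{n-l-1}+\text{offset}$; after the affine rescaling these match up to a function whose boundary energy across that interface is controlled by $k^{-l}\mcD_{n-l}(h_{n-l})$ (the $k^{-l}$ coming from squaring the contraction factor against the $N^{\text{something}}$ normalization — really from the fact that $b_{n-l}$ is scaled by $k^{-l}$ in the vertical direction, and $\mcD$ counts roughly $k^l$-many such interface edges per unit length). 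Summing Lemma \ref{lemma43}(d) over the bricks, together with these interface contributions and the contribution $k^{-n}$ of the bottom strip $\bar T_{n,n}$ (on which $u$ varies by only $k^{-n}$, so a constant-per-column function costs $O(k^{-n})$), yields
\[
\mcD_n(f_n)\leq C\cdot\Big(k^{-n}+\sum_{l=0}^{n-2}k^{-l}\big(\mcD_{n-l}(h_{n-l})+\mcD_{n-l-1}(h_{n-l-1})+\mcD_{n-l-1}(h'_{n-l-1})\big)\Big).
\]
The $h'_{m}$ and shifted-index $h_{m-1}$ terms are absorbed into the stated bound after reindexing and observing (via Lemma \ref{lemma33} / the monotonicity $\mcD_m(h_m)\asymp R_m(W_{m,2},W_{m,4})^{-1}$ and similar for $h'_m$) that $\mcD_{m-1}(h'_{m-1})$ and $\mcD_{m-1}(h_{m-1})$ are comparable to $\mcD_m(h_m)$ up to the renormalization factor, which only changes $C$; this gives (\ref{eqn43}).

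The main obstacle I anticipate is the bookkeeping at the interior/corner regions — precisely, checking that the four symmetric collars, the central region where they overlap, and the transition to the interior of $W_n$ can all be filled in with a globally defined $f_n\in l(W_n)$ whose energy is still only a bounded multiple of the wall energy, without the boundary-value constraint $f_n|_{\partial W_n}=P_nu|_{\partial W_n}$ forcing uncontrolled energy near the corners $q_1,\dots,q_4$. The clean way to organize this is: define $f_n$ on all of $\bar T_{1,n}$ as above; reflect to get it on $\bar T_{2,n},\bar T_{3,n},\bar T_{4,n}$ (the three other walls), where consistency on the overlaps $\bar T_{i,n}\cap\bar T_{j,n}$ near corners follows because on $L_i$ the prescribed value is the restriction of the \emph{linear} function $u$, which is itself $\msG$-equivariant in the needed sense; and on the remaining core cells $W_n\setminus\bigcup_i\bar T_{i,n}$ simply extend by a harmonic (energy-minimizing) extension of the already-defined boundary data, whose energy is dominated by that of any extension, in particular by the collar energy plus $O(1)$. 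One should double-check that $A_n$ was chosen (one layer of cells thick, adjacent to the wall) precisely so that this harmonic core extension sees boundary data equal to the $h_n$-profile and hence does not create a bottleneck; this is the delicate point, but it is a finite, $k$-dependent combinatorial verification rather than anything requiring new estimates.
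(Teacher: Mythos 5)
Your first half — gluing affine copies of the brick functions $b_{n-l}$ over the decomposition (\ref{eqn42}) of the bottom wall, with horizontal matching from Lemma \ref{lemma43}(a), vertical matching from (b),(c), energies summed via (d), and the bottom strip $\bar T_{n,n}$ costing $O(k^{-n})$ — is exactly the paper's Step 1, and is fine. The gap is in your extension to the rest of $W_n$. The boundary condition requires $f_n=P_nu$ on \emph{all} of $\partial W_n$, and on the two vertical columns $W_{n,4},W_{n,2}$ the data $P_nu$ is (essentially) constant, equal to $\tfrac1{2k^n}$ and $1-\tfrac1{2k^n}$. But $u(x_1,x_2)=x_1$ is invariant only under $\Gamma_v$; under the diagonal reflections or rotations that carry the bottom wall to the side walls it becomes $x_2$, $1-x_2$, etc. So ``reflect to get it on the other three walls'' either produces side collars carrying a linear-in-$x_2$ profile on $W_{n,2}\cup W_{n,4}$ — violating the required boundary values outright — or the claimed compatibility ``because $u$ is $\msG$-equivariant'' is vacuous; and since each wall is a full-width (resp.\ full-height) strip, the overlaps of four such collars in the corner $1/k\times 1/k$ regions are a real consistency problem that your argument does not resolve. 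The paper avoids all of this: it reflects only across $\Gamma_v$ (the one symmetry fixing $u$) and fills everything else with $h_n$ itself, which simultaneously supplies the exact constant values on $W_{n,2}\cup W_{n,4}$ and agrees with the level-$0$ brick tops by Lemma \ref{lemma43}(b), so the filler contributes only $\mcD_n(h_n)$, which is the $l=0$ term of (\ref{eqn43}).

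Your quantitative fallback is also not good enough: bounding the harmonic core extension by ``the collar energy plus $O(1)$'' is fatal, because the right-hand side of (\ref{eqn43}) is exponentially small in $n$ (already $\mcD_m(h_m)=R_m(W_{m,2},W_{m,4})^{-1}\leq C(N/k^2)^m$ by testing with $P_m u$), and the whole point of the lemma, via Corollary \ref{coro45} and Proposition \ref{prop46}, is a bound of size $N^n\sigma_n^{-1}$; any additive constant destroys \textbf{(B)}. To use an energy-minimizing core you must exhibit a concrete competitor of comparable small energy (the paper's $h_n$ plays this role), and that competitor must also match the side-column boundary data — which again forces the constant, not reflected-linear, values there. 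Two smaller points: the vertical interfaces in the wall actually match exactly after the affine offsets (no extra $k^{-l}\mcD_{n-l}(h_{n-l})$ term is needed there), and your appeal to Lemma \ref{lemma33} to absorb the $\mcD_{m}(h'_{m})$ terms is circular, since Lemma \ref{lemma33} assumes \textbf{(B)}, which is what Section \ref{sec4} is proving; the paper instead uses the direct gluing estimate $\mcD_n(h'_n)\leq C\cdot\mcD_{n-2}(h_{n-2})$, which is the absorption you should invoke.
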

\begin{proof}
For each $m\geq 1$, we define $e_m(w)=\sum_{j=1}^m k^{m-j}(w_j-1)$ for $w=w_1\cdots w_m\in W_{m,1}$. In particular, we write $e_0(\emptyset) = 0$. For convenience, we first consider $u'_n\in l(K)$ defined by $u'_n(x_1,x_2)=\frac{k^n}{k^n-1}(x_1-\frac{1}{2k^n})$ instead of $u$. Then we have $(P_n u'_n)(w)=\frac{e_n(w)}{k^n-1}$. In particular, $(P_n u'_n)(1\cdots 1)=0$ and $(P_n u'_n)(k\cdots k)=1$. Now we construct a function $g_n\in l(W_n)$ such that $g_n|_{\partial W_n}=P_nu'_n|_{\partial W_n}$ by the following two steps.\vspace{0.2cm}
	
\noindent\textit{Step 1. We first define $g'_n$ on $\bar T_{1,n}$.}
	
We denote the indicator function of a set $A$ by $1_A$. Define
\[g'_n=(P_n u_n')\cdot 1_{\bar T_{n,n}}+\sum_{m=0}^{n-2}\sum_{w\in W_{m,1}}\frac{1}{k^m}\big(e_m(w)+b_{n-m}\circ \Psi_{w}^{-1}\big)\cdot 1_{w\cdot \bar{B}_{n-m}},\]
where  by a little abuse of the notation, $b_{n-m}\circ \Psi_w^{-1}$ is the function supported on $w\cdot \bar{B}_{n-m}$ such that $b_{n-m}\circ \Psi_w^{-1}(w\cdot \eta)=b_{n-m}(\eta)$ for any $\eta\in \bar{B}_{n-m}$.\vspace{0.2cm}
	
\noindent\textit{Step 2. We use symmetry to extend $g_n'$ to $g_n\in l(W_n)$.}
	
We define
\[g_n(w)=
\begin{cases}
	g'_n(w),&\text{ if }w\in \bar{T}_{1,n},\\
	g'_n\circ \Gamma_v(w),&\text{ if }w\in \Gamma_v\bar{T}_{1,n},\\
	h_n(w),&\text{ otherwise}.	
\end{cases}\]
Clearly, by Lemma \ref{lemma43} (a), $g_n$ satisfies the boundary condition $g_n|_{\partial W_n}=P_n u_n'|_{\partial W_n}$. In addition, $g_n$ is $\Gamma_v$-symmetric, since $h_n$ is $\Gamma_v$-symmetric by the discussion in Step b.1.\vspace{0.2cm}
	
In the following, we estimate $\mcD_{n, \bar T_{1,n}}(g_n)$.
	
Firstly, we consider the energy of $g_n$ on each ``layer'' of building bricks: for $0\leq m\leq n-2$, we have
\begin{equation}\label{eqn4add1}
\begin{aligned}
\mcD_{n,\bar T_{m+1,n}\setminus \bar T_{m+2,n}}(g_n)&=\sum_{w\in W_{m,1}}\mcD_{n,w\cdot \bar{B}_{n-m}}\big(\frac{1}{k^m}e_m(w)+\frac{1}{k^m}b_{n-m}\circ \Psi_{w}^{-1}\big)\\
&=\frac1{k^m}\mcD_{n-m,\bar{B}_{n-m}}(b_{n-m}),
\end{aligned}
\end{equation}
where the first equality is because $g_n(v)=g_n(v')$ for any $v,v'\in \bar T_{m+1,n}\setminus \bar T_{m+2,n}$ such that $v\sn v'$ and $v_1v_2\cdots v_{m}\neq v'_1v'_2\cdots v'_{m}$, as a consequence of Lemma \ref{lemma43} (a) and the construction of $g_n$.

Secondly, there is $C_1>0$ independent of $n$ such that
\begin{equation}\label{eqn4add2}
\mcD_{n,\bar T_{n-1,n}}(g_n)\leq C_1\cdot k^{-n},
\end{equation}
since there are at most $4Nk^{n-1}$ pairs of $n$-cells $v\sn v'$ in $\bar T_{n-1,n}$ where $4$ appears since each $n$-cell is neighbouring to at most $8$ $n$-cells and $Nk^{n-1}$ is the number of $n$-cells in $\bar T_{n-1,n}$, and $|g_n(v)-g_n(v')|\leq k^{-n+2}$ by the construction of $g_n$ where $k^{-n+2}$ is the oscillation of $g_n$ in $w\cdot \bar{T}_{1,2}$ for each $w\in W_{n-2,1}$ (we also use the observation that $|g_n(v)-g_n(v')|\leq k^{-n}$ for any $w\neq w'\in W_{n-2,1}$, $v\in w\cdot \bar{T}_{1,2},v'\in w'\cdot \bar{T}_{1,2}$ such that $v\sn v'$).

Thirdly, we consider the  energy of $g_n$ between two ``layers'' of building bricks: for $0\leq m\leq n-1$, let
\[E_{m,n}=\{w\in \bar{T}_{m+1,n}:w\sn \bar{T}_{m,n}\setminus \bar{T}_{m+1,n}\}\bigcup \{w\in \bar{T}_{m,n}\setminus \bar{T}_{m+1,n}:w\sn \bar{T}_{m+1,n}\},\]
where we write $\bar T_{0,n}:=W_n$. In particular, we are interested in $E_{m,n}$, $1\leq m\leq n-2$. For $1\leq m\leq n-2$, one can check that $E_{m,n}=\bigcup_{w\in W_{m,1}}w\cdot E_{0,n-m}$, and in addition, by Lemma \ref{lemma43} (b), (c), and the construction of $g_n$, one can see that
\[g_n(w\cdot v)=\frac{e_m(w)}{k^{m}}+\frac{1}{k^{m}}h_{n-m}(v)\quad \hbox{ for every } w\in W_{m,1}, v\in E_{0,n-m}.\]
So we have for $1\leq m\leq n-2$,
\begin{equation}\label{eqn4add3}
\mcD_{n,E_{m,n}}(g_n)=\sum_{w\in W_{m,1}}\mcD_{n,w\cdot E_{0,n-m}}\big(\frac{e_m(w)+h_{n-m}\circ \Psi_w^{-1}}{k^{m}}\big)\leq \frac{\mcD_{n-m}(h_{n-m})}{k^{m}},
\end{equation}
where the equality holds due to a same reason as the first case.


Finally, adding up (\ref{eqn4add1}),(\ref{eqn4add2}) and (\ref{eqn4add3}) together,  using Lemma \ref{lemma4add1} and Lemma \ref{lemma43} (d), and viewing $\mcD_1(h'_1)$ as a constant depending only on $K$, we get the energy estimate
\[\mcD_{n,\bar T_{1,n}}(g_n)\leq C_2\cdot\big(k^{-n}+\sum_{m=0}^{n-1} k^{-m}\mcD_{n-m}(h_{n-m})\big),\]
where $C_2>0$ is a constant independent of $n$.

Since $g_n$ is $\Gamma_v$-symmetric and $g_n|_{W_{1,1}\cdot W_{n-1,3}}=h_n|_{W_{1,1}\cdot W_{n-1,3}}$, we get $\mcD_n(g_n)\leq C_3\cdot \big(\mcD_{n,\bar T_{1,n}}(g_n)+\mcD_n(h_n)\big)$ for some $C_3>0$.
    	
At last, we take $f_n=\frac{k^n-1}{k^n}g_n+\frac{1}{2k^n}$ so that $f_n|_{\partial W_n}=P_nu|_{\partial W_n}$. The energy estimate (\ref{eqn43}) of $f_n$ follows immediately from that of $g_n$.
\end{proof}

In the next subsection, we will show the following two estimates.
\begin{eqnarray}\label{eqn44}
C\cdot N^{-n+m}\sigma_{n-m}\geq (\frac{2}{k})^mN^{-n}\sigma_{n}\quad &\hbox{ for every } n\geq 1 ,0\leq m\leq n-1,\\
\label{eqn45}
\mcD_n(h_n)\leq C\cdot N^n\sigma_n^{-1}\quad\quad&\hbox{ for every } n\geq 1,
\end{eqnarray}
for some $C>0$ independent of $n$. With these estimates, we have a clearer explanation of (\ref{eqn43}).

\begin{corollary}\label{coro45}
	Assume (\ref{eqn44}) and (\ref{eqn45}) hold. Let $f_n$  be the same function in Lemma \ref{lemma44}. Then
	\begin{equation}\label{eqn46}
		\mcD_n(f_n)\leq C\cdot N^n\sigma_n^{-1}
	\end{equation}
	for some constant $C>0$ independent of $n\geq 2$.
\end{corollary}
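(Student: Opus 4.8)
\textbf{Proof proposal for Corollary \ref{coro45}.}

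The plan is to feed the two estimates (\ref{eqn44}) and (\ref{eqn45}) into the bound (\ref{eqn43}) from Lemma \ref{lemma44} and check that every term on the right-hand side is dominated by $N^n\sigma_n^{-1}$ up to a multiplicative constant. Recall from Lemma \ref{lemma33} (and the discussion around (\ref{eqn33})) that $N^{-m}\sigma_m\asymp r^{-m}$, so writing $s=N\sigma_n^{-1}\cdot\sigma_{n}$ is circular; instead the clean way is to note that $N^{n}\sigma_n^{-1}\asymp r^n$ and, crucially, that the renormalization factor satisfies $r\le 1-\tfrac1{k^2}<1$, while (\ref{eqn44}) tells us $N^{-n}\sigma_n$ grows no faster than $(k/2)^n$ to first exponential order, equivalently $r^{-n}=N^{n}/\sigma_n\cdot$(bounded)$\ \lesssim (k/2)^n$ up to subexponential factors. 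Actually the argument does not even need the sharp constant $k/2$: what matters is that $r<1/k$ is \emph{false} in general, so one must track the geometric sum carefully rather than bound it termwise by its largest term.

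Concretely, I would argue as follows. By (\ref{eqn45}), $\mcD_{n-l}(h_{n-l})\le C\, N^{n-l}\sigma_{n-l}^{-1}$ for each $0\le l\le n-2$, hence
\[
\sum_{l=0}^{n-2}k^{-l}\mcD_{n-l}(h_{n-l})\le C\sum_{l=0}^{n-2}k^{-l}N^{n-l}\sigma_{n-l}^{-1}
= C\, N^{n}\sigma_n^{-1}\sum_{l=0}^{n-2}k^{-l}\,\frac{N^{-l}\sigma_{n-l}^{-1}}{N^{-n}\sigma_n^{-1}}
= C\, N^{n}\sigma_n^{-1}\sum_{l=0}^{n-2}k^{-l}\,\frac{\sigma_n/N^{n}}{\sigma_{n-l}/N^{n-l}}\cdot N^{-l}\cdot\frac{1}{N^{-l}}.
\]
The ratio $\dfrac{N^{-(n-l)}\sigma_{n-l}}{N^{-n}\sigma_n}$ is, by Lemma \ref{lemma33}, comparable to $r^{-(n-l)}/r^{-n}=r^{l}$, so the $l$-th summand of $\sum_l k^{-l}N^{-l}\sigma_{n-l}^{-1}/(N^{-n}\sigma_n^{-1})$ is $\asymp k^{-l} r^{-l}$... which would diverge. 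This is the place where (\ref{eqn44}) must be used instead of Lemma \ref{lemma33}: the submultiplicativity only gives $r\le N/k^2$, whereas (\ref{eqn44}) gives the strictly better $N^{-n}\sigma_n\lesssim (k/2-\varepsilon)^{-1}$-type control, i.e. $k^{-l}\cdot(N^{-l}\sigma_l)\lesssim k^{-l}(k/2)^{l}=2^{-l}$ up to subexponential factors, so that $\sum_l k^{-l}\dfrac{N^{-(n-l)}\sigma_{n-l}}{N^{-n}\sigma_n}\cdot(\text{stuff})$ is a convergent geometric-type series bounded independently of $n$. The remaining term $k^{-n}$ in (\ref{eqn43}) is trivially $\le N^n\sigma_n^{-1}$ since $N^n\sigma_n^{-1}\asymp r^n$ and $r>1/k^2\ge k^{-n/1}$ for... more simply, $k^{-n}=o(r^n)$ because $r>1/k$ may fail but $r\ge$ a fixed power; in any case $k^{-n}\le C N^n\sigma_n^{-1}$ follows from (\ref{eqn32}) rewritten as $N^{-n}\sigma_n^{-1}\gtrsim R_n\gtrsim (k^2/N)^n$, giving $N^n\sigma_n^{-1}\gtrsim (k^2/N)^n\cdot N^n\cdot N^{-n}$... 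I would simply cite $N^n\sigma_n^{-1}\asymp r^n$ with $r\le 1-k^{-2}$ and observe $k^{-n}/r^n\to 0$.

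The main obstacle, then, is purely bookkeeping with the two competing growth rates: Lemma \ref{lemma33}'s relation $\sigma_n\asymp N^n r^{-n}$ by itself is \emph{not} enough to make the geometric sum in (\ref{eqn43}) converge — one genuinely needs the sharper asymptotic (\ref{eqn44}), $\limsup\frac1n\log(N^{-n}\sigma_n)\le\log k-\log 2<\log k$, which ensures $N^{-n}\sigma_n$ grows strictly slower than $k^n$ and hence the factor $k^{-l}$ in (\ref{eqn43}) beats the growth of $\mcD_{n-l}(h_{n-l})\asymp N^{n-l}\sigma_{n-l}^{-1}$ relative to $N^n\sigma_n^{-1}$. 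Once that is in hand, the sum $\sum_{l=0}^{n-2}k^{-l}\mcD_{n-l}(h_{n-l})(N^n\sigma_n^{-1})^{-1}$ is bounded by a convergent series whose value is a constant depending only on $k$, and combining with the trivial bound on the $k^{-n}$ term yields (\ref{eqn46}). I would phrase the final computation as: choose $n_0$ so that $N^{-m}\sigma_m\le (k-\delta)^m$ for $m\ge n_0$ with $k-\delta<k$ and $\delta$ chosen so $(k-\delta)/k<1$; split $\sum_{l=0}^{n-2}=\sum_{l\le n-n_0}+\sum_{l> n-n_0}$, bound the first sum by $C N^n\sigma_n^{-1}\sum_{l\ge0}((k-\delta)/k)^{l}$ and the (boundedly many, but here it is $n-n_0<l\le n-2$ so also controlled) tail directly using $\mcD_m(h_m)\le C N^m\sigma_m^{-1}$ and the comparability $N^m\sigma_m^{-1}\asymp r^m$ for all $m$.
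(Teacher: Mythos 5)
There is a genuine gap, and it is structural rather than cosmetic. Corollary \ref{coro45} sits strictly upstream of condition \textbf{(B)}: it feeds Proposition \ref{prop46}, and \textbf{(B)} is only obtained afterwards from Proposition \ref{prop46}, Lemma \ref{lemma48} and Corollary \ref{coro49}. But Lemma \ref{lemma33} — and with it everything you lean on: the existence of the renormalization factor $r$, the comparability $N^{-m}\sigma_m\asymp r^{-m}$, $\sigma_m\asymp N^mR_m$, and the bounds $r\le 1-k^{-2}$, $r\ge 2/k$ — is proved \emph{under} \textbf{(B)}. Your proposal invokes these facts repeatedly (``$N^n\sigma_n^{-1}\asymp r^n$'', ``$r\le 1-\frac1{k^2}$'', and at the very end ``the comparability $N^m\sigma_m^{-1}\asymp r^m$ for all $m$''), so as written the argument is circular: at this stage only (\ref{eqn44}), (\ref{eqn45}) and the unconditional statements of Proposition \ref{prop32}/(\ref{eqn33}) are available, and $r$ is not even defined.

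Even setting the circularity aside, the one estimate that actually carries the corollary is never supplied. Plugging (\ref{eqn45}) into (\ref{eqn43}) reduces matters to showing $\sum_{l=0}^{n-2}k^{-l}\,\frac{N^{-n}\sigma_n}{N^{-(n-l)}\sigma_{n-l}}\le C$ uniformly in $n$. Now (\ref{eqn44}) is an \emph{upper} bound on $N^{-m}\sigma_m$; it gives no lower bound on $N^{-(n-l)}\sigma_{n-l}$, and your split-sum finale (``choose $n_0$ so that $N^{-m}\sigma_m\le (k-\delta)^m$\,'') bounds the wrong side of the ratio: to conclude you would need a matching lower bound $N^{-(n-l)}\sigma_{n-l}\gtrsim (k-\delta)^{n-l}$, and the only unconditional lower bound, $N^{-m}\sigma_m\ge c(k^2/N)^m$ from (\ref{eqn32})--(\ref{eqn33}), is far too weak whenever $N>2k$ (the typical case). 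What closes the gap is a cross-scale inequality, e.g. the submultiplicativity $N^{-(m+l)}\sigma_{m+l}\le C\,(N^{-m}\sigma_m)(N^{-l}\sigma_l)$: for $w\sn w'$ and $f$ on $\{w,w'\}\cdot W_{m+l}$ of unit energy, set $g(v)=[f]_{v\cdot W_l}$ on $\{w,w'\}\cdot W_m$; the definition of $\sigma_l$ (as in Lemma \ref{lemma36}) gives $\mcD_{|w|+m,\{w,w'\}\cdot W_m}(g)\le C N^{-l}\sigma_l$, and the definition of $\sigma_m$ applied to $g$ then yields the claim. With this, $k^{-l}\frac{N^{-n}\sigma_n}{N^{-(n-l)}\sigma_{n-l}}\le C\,k^{-l}N^{-l}\sigma_l$, and (\ref{eqn44}) makes $\sum_{l}k^{-l}N^{-l}\sigma_l$ converge (essentially at rate $2^{-l}$), uniformly in $n$; the leftover $k^{-n}$ term in (\ref{eqn43}) is likewise $\le C\,N^n\sigma_n^{-1}$ directly by (\ref{eqn44}). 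Two smaller slips: with $\frac{N^{-(n-l)}\sigma_{n-l}}{N^{-n}\sigma_n}\asymp r^{l}$ the resulting series $\sum_l (kr)^{-l}$ \emph{converges} (eventually $kr\ge 2$), it does not diverge as you assert; and (\ref{eqn45}) gives only $\mcD_m(h_m)\le C N^m\sigma_m^{-1}$, not the two-sided $\asymp$ you use in the last paragraph.
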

\begin{proof}
By (\ref{eqn44}) and (\ref{eqn45}), there is a constant $C_1>0$ independent of $n$ so that
\[
\mcD_{n-m}(h_{n-m})\leq C_1\cdot (\frac{k}{2})^m N^n\sigma_n^{-1}\quad\hbox{ for every } n\geq 1,0\leq m\leq n-1,
\]
and
\begin{equation}\label{eqn4add4}
C_1\cdot N^n\sigma_n^{-1}\geq (\frac{2}{k})^n\quad\hbox{ for every } n\geq 1.
\end{equation}
Substituting the above estimates into (\ref{eqn43}), we see that
\[
\mcD_n(f_n)\leq C_2\cdot\big(k^{-n}+\sum_{m=0}^{n-1} k^{-m}\mcD_{n-m}(h_{n-m})\big)\leq C_1C_2\cdot\sum_{m=0}^n\frac{1}{2^m}N^n\sigma_n^{-1}\leq 2C_1C_2\cdot N^n\sigma_n^{-1},
\]
where $C_2$ is the constant $C$ in Lemma \ref{lemma44}.
\end{proof}

Lemma \ref{lemma44} and Corollary \ref{coro45} are enough for us to verify \textbf{(B)}.

\begin{proposition}\label{prop46}
Assume  (\ref{eqn44}) and (\ref{eqn45}) hold. Then \textbf{(B)} holds.
\end{proposition}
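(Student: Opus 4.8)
The plan is to prove \textbf{(B)}, i.e.\ $\sigma_n \leq C\cdot N^nR_n$, by producing, for every $w\in W_*\setminus\{\emptyset\}$, a test function $g_w\in l(W_{|w|+n})$ with $g_w|_{w\cdot W_n}=1$, $g_w|_{(W_{|w|}\setminus\mathcal N_w)\cdot W_n}=0$ and $\mcD_{|w|+n}(g_w)\leq C\cdot N^n\sigma_n^{-1}$; by the definition of $R_m$ this forces $R_n \geq C^{-1}N^{-n}\sigma_n$, which together with $\sigma_m\asymp\lambda_m\asymp N^mR_m$ from (\ref{eqn33}) (the only remaining unknown was the direction \textbf{(B)}) closes the loop. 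So the entire content is the construction of $g_w$ with the stated energy bound, and this is where the building bricks of Section \ref{sec41} and the linear-boundary-value function $f_n$ of Corollary \ref{coro45} enter.

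\textbf{Main steps.} First I would fix $w$ and describe the geometry of the neighbourhood $\mathcal N_w$: it consists of $F_wK$ together with the cells meeting it through a common cell $F_{w''}K$, and by \textbf{(A3)} and the symmetry hypothesis these are arranged around $F_wK$ in a controlled (though infinitely-varied) pattern along at most the four sides $F_wL_i$. The point is that on each such side the picture is, up to a $\msG$-isometry and a reindexing, an instance of the ``wall'' decomposition $\bar T_{1,n}$ of (\ref{eqn42}): a lower wall $\bar T_{n,n}$ plus scaled copies of building bricks $\bar B_{n-l}$. Second, on the cell $F_wK$ itself I set $g_w\equiv 1$; on a cell $F_{w'}K$ with $w'\notin\mathcal N_w$ I set $g_w\equiv 0$; and in the transition region — the cells of $\mathcal N_w\setminus\{w\}$ and, one level down, the sub-cells interpolating between value $1$ and value $0$ — I glue in affine copies of the function $f_n$ from Lemma \ref{lemma44} / Corollary \ref{coro45}, which already carries linear boundary data on $\partial W_n$ with energy $\leq C\cdot N^n\sigma_n^{-1}$. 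Concretely: reflect/rotate $f_n$ so its linear direction runs transverse to the relevant side $F_wL_i$, compose with $F_{w'}^{-1}$, and paste; where two pasted pieces overlap on a shared $(n)$-face their boundary values agree because both equal the affine function $P_nu$ there, so no spurious boundary energy is created. Third, after the paste one applies the Markov/truncation operation $\bar f\mapsto (f\vee 0)\wedge 1$ to kill values outside $[0,1]$ without increasing energy (this is where ``cutting without increasing energy'', already used in the proof of Theorem \ref{thm34}, is invoked). Fourth, sum the energies: there are $O(1)$ pasted copies (one per side, i.e.\ at most four, times a bounded number of interpolating sub-cells by \textbf{(A3)}), plus the interior cells where $g_w$ is locally constant and contributes nothing, plus the cross-boundary edges between a paste region and a locally-constant region, which contribute $O(1)$ each since the jump there is $O(1)$; multiplying by $N^n/\sigma_n$... more precisely the total is $\leq C\cdot(\mcD_n(f_n)+1)\leq C'\cdot N^n\sigma_n^{-1}$, using (\ref{eqn44}) to see that the lone additive $1$ is dominated by $N^n\sigma_n^{-1}$.

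\textbf{The main obstacle.} The delicate point is the gluing along the sides $F_wL_i$: because $\USC$ cells may meet in segments of irrational length and $\mathcal N_w$ ranges over infinitely many combinatorial types, I cannot just invoke a finite case analysis or the classical corner/slide moves. The argument must be uniform in $w$, and this is exactly what the building-brick decomposition (\ref{eqn42}) and Lemma \ref{lemma43}(a)--(c) were engineered for: Lemma \ref{lemma43}(a) says horizontal copies of $b_n$ glue with zero boundary energy, and (b),(c) control the vertical gluing of $b_n$ with scaled $b_{n-1}$ by $\mcD_{n-1}(h_{n-1})$ — so the infinitely-many-types issue is absorbed into the geometric-series estimate (\ref{eqn43}) that already produced $f_n$. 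Hence the real work is bookkeeping: check that the portion of $\mathcal N_w$ along each side is genuinely an isometric copy of (a sub-configuration of) $\bar T_{1,n}$ so that $f_n$ can be transplanted, and verify that the finitely many interface edges between transplanted pieces and the constant pieces $\{1\}$ on $F_wK$, $\{0\}$ outside $\mathcal N_w$ carry only $O(1)$ total energy. Once that is in place, the energy bound and thus $R_n\geq C^{-1}N^{-n}\sigma_n$, i.e.\ \textbf{(B)}, follow; combined with Lemma \ref{lemma33} this yields $N^nR_n\asymp\lambda_n\asymp\sigma_n$ and the renormalization factor $r\in(0,1)$, feeding Theorem \ref{thm34}.
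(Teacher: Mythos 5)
Your high-level plan is the same as the paper's: reduce \textbf{(B)} to constructing, for each $w\in W_m$, a test function equal to $1$ on $w\cdot W_n$, $0$ on $(W_m\setminus \mathcal{N}_w)\cdot W_n$, with energy at most $C\cdot N^n\sigma_n^{-1}$, using the affine-boundary extensions of Lemma \ref{lemma44}/Corollary \ref{coro45} together with cutting. But the energy bookkeeping in your gluing step fails. The target bound $N^n\sigma_n^{-1}$ tends to zero exponentially: by (\ref{eqn33}) and (\ref{eqn32}), $N^{-n}\sigma_n\geq C\cdot R_n\geq C'\cdot(k^2N^{-1})^n\to\infty$. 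Hence you cannot afford even a single level-$(m+n)$ edge whose jump is of order $1$, whereas each face where a pasted copy of $f_n$ meets one of your prescribed constant regions (the $1$ on $F_wK$, the $0$ outside $\mathcal{N}_w$), or where pastes attached to two different sides of $F_wK$ meet near a corner, contains on the order of $k^n$ such edges. Your count ``cross-boundary edges contribute $O(1)$ each'' times a bounded number, and the claim that the resulting additive $1$ is dominated by $N^n\sigma_n^{-1}$ via (\ref{eqn44}), read (\ref{eqn44}) backwards: (\ref{eqn44}) bounds $N^{-n}\sigma_n$ above by roughly $(k/2)^n$, so it only shows that additive errors of size $k^{-n}$ are absorbable, never constants. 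In addition, the corner/diagonal cells of $\mathcal{N}_w$, where data transverse to two different sides would have to coexist, are not addressed; and the proposed identification of the part of $\mathcal{N}_w$ along a side with an isometric copy of a sub-configuration of $\bar{T}_{1,n}$ is both unnecessary and generally false for a $\USC$ (cells may be offset by irrational amounts) --- transplanting $f_n$ into an $m$-cell only uses the self-similarity of that single cell.

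The missing device is exactly what the paper's proof supplies. Take the four globally affine functions $u_1,\dots,u_4$ centred at the centre of $F_wK$, with slope $k^m c_1$ and offset calibrated through the constant $c_0$ of \textbf{(A3)}, so that $\min_i u_i\geq 1$ on $F_wK$ and $\leq 0$ on every cell outside $\mathcal{N}_w$. For each $i$, extend the data $P_n(u_i\circ F_{w'})|_{\partial W_n}$ inside \emph{every} $m$-cell $F_{w'}K$ (not only in a transition region) by Lemma \ref{lemma44}/Corollary \ref{coro45}; across adjacent $m$-cells the boundary values are averages of one and the same affine function over neighbouring $(m+n)$-cells, so the mismatch contributes only $O(k^{-n})$, which (\ref{eqn44}) does absorb. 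Then set $g=\big((\min_i g_i)\vee 0\big)\wedge 1$: minimum and truncation do not increase the discrete energy, and they produce the exact values $1$ on $w\cdot W_n$ and $0$ on $(W_m\setminus\mathcal{N}_w)\cdot W_n$ automatically, with no $O(1)$ interface jumps and no corner analysis. Without this mechanism (or an equivalent one guaranteeing that your pasted data is already $\approx 1$ along the faces bordering $F_wK$, $\approx 0$ along the outer interface, and consistent where two sides meet), your construction does not yield $\mcD_{m+n}(g_w)\leq C\cdot N^n\sigma_n^{-1}$, and \textbf{(B)} does not follow.
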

\begin{proof}
Let $n,m\geq 1$ and $w\in W_m$. We need to estimate $R_{m+n}\big(w\cdot W_n,\mathcal{N}_w^c\cdot W_n\big)$.
	
Let $c_0$ be the constant in \textbf{(A3)}, and let $p=(p_1,p_2)$ be the center of $\Psi_wK$, i.e. $p=\Psi_w(z)$ with $z=(\frac12,\frac12)$. We consider the following four linear functions on $K$,
\[\begin{aligned}
u_1(x_1,x_2)=k^mc_1(x_1-p_1)+c_2,\quad u_2(x_1,x_2)=k^mc_1(p_1-x_1)+c_2,\\
u_3(x_1,x_2)=k^mc_1(x_2-p_2)+c_2,\quad u_4(x_1,x_2)=k^mc_1(p_2-x_2)+c_2,
\end{aligned}\]
for $x=(x_1,x_2)\in K$, where $c_1=\sqrt{2}c_0^{-1}$ and $c_2=1+\frac 12 c_1$. In particular, let $u'=\min_{i=1}^4P_{m+n}(u_i)$, one can check that $u'|_{w\cdot W_n}\geq 1$ and $u'|_{\mathcal{N}_w^c\cdot W_n}\leq 0$.
	
By Lemma \ref{lemma44} and Corollary \ref{coro45}, we can construct $g_i\in l(W_{m+n})$, $1\leq i\leq 4$ so that
$g_i(w'\cdot \tau)=P_n(u_i\circ \Psi_{w'})(\tau)\hbox{ for every }w'\in W_m,\tau\in \partial W_n$ and $\mcD_{m+n,w'\cdot W_n}(g_i)\leq C\cdot c_1^2N^n\sigma_n^{-1}$, where $C$ is the same constant in Corollary \ref{coro45}. Indeed, for the case $i=1$, for each $w'\in W_m$, by the definition of $u_1$, we see that $u_1(x_1,x_2)=k^mc_1(x_1-p_1^{w'})+c_{w'}$ on $\Psi_{w'}K$ for some constant $c_{w'}\in \mathbb{R}$, where we denote the center of $\Psi_{w'}K$ as $p^{w'}=(p_1^{w'}, p_2^{w'})$. So $g_1(w'\cdot \tau)=c_1f_n(\tau)+c_{w'}-\frac{c_1}{2}$ for each $w'\in W_m$, $\tau\in W_n$, where $f_n$ is the same function in Lemma \ref{lemma44}. The cases $i=2,3,4$ are the same by using symmetry.
	
In addition, due to the construction of $g_i$, for each pair $w'\sm w''$ in $W_m$, and $\tau', \tau''\in \partial W_n$ such that $w'\cdot \tau'\stackrel{m+n}{\sim} w''\cdot \tau''$, the difference $|g_i(w'\cdot \tau')-g_i(w''\cdot \tau'')|$ is controlled by $c_1k^{-n}$. Since there are at most $3k^n$ pairs of such $\tau'$ and $\tau''$ for $w'\sm w''$, this gives that
\[\mcD_{m+n, A}(g_i)\leq C'\cdot N^{n}\sigma^{-1}_n+C'\cdot k^{-n}\leq C''\cdot N^n\sigma^{-1}_n,\]
where $A=(\mathcal N_w\cdot W_n)\bigcup\{v\in W_{m+n}:v\stackrel{m+n}{\sim} \mathcal N_w\cdot W_n\}$, $C',C''>0$ are independent of $n,m,w$, and the second inequality is a consequence of (\ref{eqn4add4}).
	
Finally, let $g=\big((\min_{i=1}^4 g_i)\vee 0\big)\wedge 1$. One can see that $g|_{w\cdot W_n}=1$, $g|_{\mathcal{N}_w^c\cdot W_n}=0$ and $\mcD_{m+n}(g)=\mcD_{m+n,A}(g)\leq 4C''\cdot N^n\sigma^{-1}_n$. The condition \textbf{(B)} follows immediately.
\end{proof}

\subsection{Verification of the estimates} \label{sec43}
We prove formulas (\ref{eqn44}) and (\ref{eqn45}) in this last subsection. The proof is essentially based on $\lambda_n\leq C\cdot \sigma_n$ by (\ref{eqn33}) and Lemma \ref{lemma35}.

First, we prove (\ref{eqn45}), which is equivalent to the effective resistance estimate
\[R_n(W_{n,2},W_{n,4})\geq C\cdot N^{-n}\sigma_n\]
for some $C>0$ independent of $n$.

Let $n\geq 1, m\geq 1$. For a connected subset $A$ of $W_{m,1}$ with $l=\#A\geq 2$, let $w_A^{(1)}$ be the most left one  and $w_A^{(2)}$ be the most right one in $A$, viewing $A$ as a chain of cells. Define
\[\tilde{R}_{n,m,A}=\max\big\{\mcD_{m+n,A\cdot W_n}^{-1}(f):f\in l(A\cdot W_n),f|_{w_A^{(1)}\cdot W_n}=0,f|_{w_A^{(2)}\cdot W_n}=1
\big\}.\]
Clearly, $\tilde{R}_{n,m,A}$ depends only the length of $A$, i.e. $l=\#A$, and is independent of $m$. Denote $\tilde{R}_{n,m,A}$ as  $\tilde{R}_{n,l}$. Since we are working on finite graphs, $\tilde{R}_{n,l}$ are positive and finite for any $n\geq 1$ and $l\geq 2$.

\begin{lemma}\label{lemma47}
For $n\geq 1$, we have $R_n(W_{n,2},W_{n,4})\geq\frac{1}{4l}\cdot\tilde{R}_{n,l}$.
\end{lemma}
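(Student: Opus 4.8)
The plan is to prove the equivalent ``series rule'' inequality $\tilde R_{n,l}\le 4l\cdot R_n(W_{n,2},W_{n,4})$, by turning an optimal potential for $\tilde R_{n,l}$ into an admissible competitor for $R_n(W_{n,2},W_{n,4})$. Fix any $m$ with $k^m\ge l$ and let $A=\{a_1,\dots,a_l\}\subset W_{m,1}$ be $l$ consecutive cells along $L_1$, numbered left to right, so $a_1=w_A^{(1)}$ and $a_l=w_A^{(2)}$. Let $g\in l(A\cdot W_n)$ attain the maximum defining $\tilde R_{n,l}$: $g|_{a_1\cdot W_n}=0$, $g|_{a_l\cdot W_n}=1$, and $\mcD_{m+n,A\cdot W_n}(g)=\tilde R_{n,l}^{-1}$; since cutting does not raise $\mcD$ we may assume $0\le g\le 1$. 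Put $\phi_i\in l(W_n)$, $\phi_i(\tau)=g(a_i\cdot\tau)$. As $K$ contains each side $L_j$, consecutive chain cells meet exactly along $F_{a_i}L_2=F_{a_{i+1}}L_4$ while non-consecutive cells are disjoint, so
\[
\tilde R_{n,l}^{-1}=\sum_{i=1}^l\mcD_n(\phi_i)+\sum_{i=1}^{l-1}\varepsilon_i,\qquad \varepsilon_i\ge 0,
\]
with $\varepsilon_i$ the energy across the $i$-th interface, $\phi_1\equiv 0$ and $\phi_l\equiv 1$.

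The second ingredient is a one–cell inequality. For $\psi\in l(W_n)$ put
\[
d(\psi)=\big(\min\nolimits_{W_{n,4}}\psi-\max\nolimits_{W_{n,2}}\psi\big)\vee\big(\min\nolimits_{W_{n,2}}\psi-\max\nolimits_{W_{n,4}}\psi\big)\vee 0 .
\]
If $\delta:=\min_{W_{n,4}}\psi-\max_{W_{n,2}}\psi>0$, the function obtained from $\psi$ by subtracting $\max_{W_{n,2}}\psi$, cutting into $[0,\delta]$, and dividing by $\delta$ vanishes on $W_{n,2}$, equals $1$ on $W_{n,4}$, and has energy $\le\delta^{-2}\mcD_n(\psi)$; the opposite separation is handled by $R_n(W_{n,2},W_{n,4})=R_n(W_{n,4},W_{n,2})$. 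Hence $R_n(W_{n,2},W_{n,4})\ge d(\psi)^2/\mcD_n(\psi)$ for all $\psi$ (trivially when $d(\psi)=0$). Applying this to $\psi=\phi_1,\dots,\phi_l$, summing, and using $\sum_i d(\phi_i)^2\ge l^{-1}\big(\sum_i d(\phi_i)\big)^2$,
\[
\tilde R_{n,l}^{-1}\ \ge\ \sum_{i=1}^l\mcD_n(\phi_i)\ \ge\ \frac1{l\,R_n(W_{n,2},W_{n,4})}\Big(\sum_{i=1}^l d(\phi_i)\Big)^{2},
\]
so the lemma reduces to the rise estimate $\sum_{i=1}^l d(\phi_i)\ge\tfrac12$.

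For the rise estimate I would split according to how the total rise $\phi_l-\phi_1\equiv 1$ is paid for. If $\sum_i\varepsilon_i\ge\tfrac12\tilde R_{n,l}^{-1}$, then $\tilde R_{n,l}$ is small enough to lie below the elementary a priori bound $R_n(W_{n,2},W_{n,4})\ge c\,(k^2/N)^n$ (obtained by testing $\mcD_n$ against a truncated linear ramp in the first coordinate), and the inequality is immediate. Otherwise $\sum_i\mcD_n(\phi_i)$ carries a definite fraction of $\tilde R_{n,l}^{-1}$; this forces the slices $\phi_i$ to be mutually compatible along the interfaces — and here I would use that, by the $\msG$-symmetry, consecutive chain cells are mirror images of each other (via $\Gamma_h$) along their common side — so that the rise accumulates through the faces in a controlled manner. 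Using condition \textbf{(A3)} to keep the face values from spreading out (if necessary, examining a short buffer block $a_{i_0-1}\cup a_{i_0}\cup a_{i_0+1}$), one obtains $\sum_i d(\phi_i)\ge\tfrac12$.

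The hard part will be exactly this dichotomy: quantifying how much a small amount of interface energy can smear the face values of the optimal $g$, while consuming only a small fraction of the energy budget, and doing so without losing powers of $k^n$. This is where the unconstrained geometry and the symmetry hypotheses are genuinely used; the remaining steps are bookkeeping with the series rule, plus the routine existence of the extremal $g$ (finite dimensions, and connectedness of $A\cdot W_n$).
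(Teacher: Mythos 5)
Your reduction to the series-rule inequality, the existence and truncation of the extremal $g$, the one-cell bound $R_n(W_{n,2},W_{n,4})\geq d(\psi)^2/\mcD_n(\psi)$, and the Cauchy--Schwarz step are all fine; but the entire burden of the proof then rests on the rise estimate $\sum_{i}d(\phi_i)\geq\tfrac12$, and this is exactly where the argument does not close. The quantity $d(\phi_i)$ demands a clean separation between \emph{all} values on one face of the $i$-th slice and \emph{all} values on the other, and nothing forces the slices of an optimal chain function to have this property: the face values can overlap on every interior cell (so every $d(\phi_i)=0$, since $d(\phi_1)=d(\phi_l)=0$ automatically), with the unit rise carried by a mixture of interface jumps and oscillation within faces. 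Your dichotomy does not repair this. In the first branch, knowing $\sum_i\varepsilon_i\geq\tfrac12\tilde R_{n,l}^{-1}$ gives no absolute upper bound on $\tilde R_{n,l}$ --- if $\tilde R_{n,l}$ is large the hypothesis only says the interface energy exceeds a tiny number, so the comparison with the a priori bound $R_n(W_{n,2},W_{n,4})\geq c(k^2N^{-1})^n$ is circular. In the second branch, ``the slices are mutually compatible along the interfaces'' is asserted rather than proved (and consecutive cells in $W_{m,1}$ are translates, not $\Gamma_h$-reflections, under the numbering \eqref{eqn23}, though that is a minor point); small interface energy only constrains $g$ at the finitely many adjacency pairs across each face, not on the whole face, so it does not yield separation. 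As written, the central inequality is an open claim.

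The paper avoids this obstruction entirely by never asking a slice to separate its own faces. It first proves the doubling inequality $\tilde R_{n,2^{d+1}+2}\leq 2\tilde R_{n,2^d+2}$ by cutting the optimal chain function at the median value ($f\vee\tfrac12$ and $f\wedge\tfrac12$), which reduces everything to $l=3$ at the cost of a factor $2$ per doubling (whence the $\tfrac1{4l}$). For $l=3$ the two end cells carry the \emph{constants} $0$ and $1$, so one may simply overwrite the two faces of the middle slice by $0$ and $1$ (the function $g(\tau)=1_{W_{n,2}}(\tau)+f(w'\cdot\tau)1_{W_n\setminus(W_{n,2}\cup W_{n,4})}(\tau)$); the overwriting cost is controlled by edges already present in $\mcD_{m+n,A\cdot W_n}(f)$ precisely because the neighbouring cells are constant. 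If you want to rescue your slice-by-slice scheme, you would need to replace $d(\phi_i)$ by some interface-averaged potential drop and prove a corresponding one-cell inequality, which is essentially a different (and harder) lemma; the cut-and-overwrite route is the efficient one here.
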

\begin{proof}
	One can see that $\tilde{R}_{n,l}$ is increasing in $l$, i.e. $\tilde{R}_{n,l_1}\leq \tilde{R}_{n,l_2}$ if $l_1\leq l_2$. To see this, we choose large enough $m$ and $A_1\subset A_2\subset W_{m,1}$ such that $\#A_1=l_1$, $\#A_2=l_2$, and $111\cdots 1\in A_1$. Let $h_1\in l(A_1\cdot W_n)$ so that $h_1|_{w_{A_1}^{(1)}\cdot W_n}=0$, $h_1|_{w_{A_1}^{(2)}\cdot W_n}=1$ and $\mcD_{m+n,A_1\cdot W_n}(h_1)=\tilde{R}^{-1}_{n,l_1}$. By  extending $h_1$ to $h_2\in l(A_2\cdot W_n)$ with
\[h_2|_{A_1\cdot W_n}=h_1,\quad h_2|_{(A_2\setminus A_1)\cdot W_n}=1,\]
we immediately have $h_2|_{w^{(1)}_{A_2}\cdot W_n}=0$ and $h_2|_{w^{(2)}_{A_2}\cdot W_n}=1$, and so
\[\tilde{R}_{n,l_2}^{-1}\leq\mcD_{m+n,A_2\cdot W_n}(h_2)=\mcD_{m+n,A_1\cdot W_n}(h_1)=\tilde{R}_{n,l_1}^{-1}.\]
	
For the case $l=2$, i.e. $A = \{w^{(1)}_A,w^{(2)}_A\}$ with $w^{(1)}_A\stackrel{m}{\sim}w^{(2)}_A$, the lemma is easy since we have $R_n(W_{n,2}, W_{n,4})^{-1} \leq \mathcal{D}_{n}(1_{W_{n,2}}) = \#\{(v,v'):v\stackrel{n}{\sim}v', v\in W_{n,2},v'\in W_n\setminus W_{n,2}\} \leq \#\{(\tau,\tau'):\tau\stackrel{m+n}{\sim}\tau', \tau\in w_{A}^{(1)}\cdot W_{n,2},\tau'\in w_{A}^{(2)}\cdot W_{n,4}\} = \tilde{R}_{n,2}^{-1}$, where $1_{B}$ denotes the indicator function of $B$ in $W_n$.

Then we look at the case $l = 3$, i.e. ${A=\{w^{(1)}_A,w,w^{(2)}_A\}}$ such that ${w^{(1)}_A\sm w\sm w^{(2)}_A}$. Choose $f\in l(A\cdot W_n)$ so that {$f|_{w^{(1)}_A\cdot W_n}=0,f|_{w^{(2)}_A\cdot W_n}=1$} and $\mcD_{m+n,A\cdot W_n}(f)=\tilde{R}_{n,3}^{-1}$. Define $g\in l(W_n)$ as
\[g(v)=1_{W_{n,2}}(v)+f(w\cdot v)\cdot 1_{W_n\setminus (W_{n,2}\cup W_{n,4})}(v)+0\cdot 1_{W_{n,4}}(v)\quad\hbox{ for every } v\in W_n.\]
Then, we have $g|_{W_{n,4}}=0$, $g|_{W_{n,2}}=1$. In addition,
\begin{align*}
&\sum_{v\in W_{n,4}}\sum_{v':v'\sn v}\big(g(v)-g(v')\big)^2
=\sum_{v\in W_{n,4}}\sum_{v'\in W_n\setminus W_{n,4}:v'\sn v}\big(0-f(w\cdot v')\big)^2\\
\leq& 2\sum_{v\in W_{n,4}}\sum_{v'\in W_n\setminus W_{n,4}:v'\sn v} \Big(\big(0-f(w\cdot v)\big)^2+\big(f(w\cdot v)-f(w\cdot v')\big)^2\Big)\\
\leq& 2\sum_{v\in W_{n,4}}\Big(\sum_{\tau'\in w_A^{(1)}\cdot W_n:\tau'\stackrel{m+n}{\sim}w\cdot v}\big(f(\tau')-f(w\cdot v)\big)^2+\sum_{v'\in W_n\setminus W_{n,4}:v'\sn v} \big(f(w\cdot v)-f(w\cdot v')\big)^2\Big)\\
\leq&2\sum_{\tau\in w\cdot W_{n,4}}\sum_{\tau'\in A\cdot W_n:\tau'\stackrel{m+n}{\sim}\tau}\big(f(\tau)-f(\tau')\big)^2,
\end{align*}
where the equality holds as $g(v)=g(v')=0$ for each $v,v'\in W_{n,4}$, and the second inequality holds because $f|_{w_A^{(1)}\cdot W_n}=0$ and for each $v\in W_{n,4}$, we have $\#\{\tau'\in w_A^{(1)}\cdot W_n:\tau'\stackrel{m+n}{\sim} w\cdot v\}\geq \#\{v'\in W_n\setminus W_{n,4}:v'\sn v\}$. One can apply the same argument to $W_{n,2}$ by $\Gamma_h$-symmetry.  Thus,
\[\begin{aligned}
\mcD_{n}(g)&=\sum_{v\in W_{n,2}\bigcup W_{n,4}}\sum_{v':v'\sn v}\big(g(v)-g(v')\big)^2+\mcD_{n,W_{n}\setminus (W_{n,4}\cup W_{n,2})}(g)\\
&\leq 2\sum_{i=2,4}\sum_{\tau\in w\cdot W_{n,i}}\sum_{\tau'\in A\cdot W_n:\tau'\stackrel{m+n}{\sim}\tau}\big(f(\tau)-f(\tau')\big)^2+\mcD_{m+n,w\cdot\big(W_{n}\setminus (W_{n,4}\cup W_{n,2})\big)}(f)\\
&\leq 2\cdot \mcD_{m+n,A\cdot W_n}(f).
\end{aligned}\]
Thus,
\[{R_n(W_{n,2},W_{n,4})}\geq \big(\mathcal{D}_{n}(g)\big)^{-1}\geq \frac{1}{2}\mcD^{-1}_{m+n,A\cdot W_n}(f)= \frac{1}{2}\tilde{R}_{n,3}.\]
	
Next, we consider the cases $l=2^s+2$, $s\geq 0$. By applying symmetry, one can see
\begin{equation}\label{iteration}
\tilde{R}_{n,2^{s+1}+2}\leq 2\cdot \tilde{R}_{n,2^s+2}.
\end{equation}
To see this, we fix a connected $A\subset W_{m,1}$ for some large enough $m\geq1$ with $\#A=2^{s+1}+2$, and choose $f\in l(A\cdot W_n)$ so that $f|_{w^{(1)}_A\cdot W_n}=0$, $f|_{w^{(2)}_A\cdot W_n}=1$ and $\mcD_{m+n,A\cdot W_n}(f)=\tilde{R}_{n,2^{s+1}+2}^{-1}$. To apply symmetry, we divide $A$ in the middle: let $A=A_1\bigsqcup A_2$, where $A_1,A_2$ are connected and $\#A_1=\#A_2=2^s+1$, $w_A^{(1)}\in A_1,w_A^{(2)}\in A_2$. Then, $f-\frac{1}{2}$ is antisymmetric with respect to the reflection $\Gamma_A$ which interchanges $A_1,A_2$ by a routine argument. In fact, if $f-\frac12$ is not $\Gamma_A$-antisymmetric, which means $f-\frac12\neq -f\circ\Gamma_A+\frac12$, then $\frac{f-f\circ \Gamma_A+1}{2}|_{w^{(1)}_A\cdot W_n}=0$, $\frac{f-f\circ \Gamma_A+1}{2}|_{w^{(2)}_A\cdot W_n}=1$ and $\mcD_{m+n, A\cdot W_n}(\frac{f-f\circ \Gamma_A+1}{2})<\frac12\mcD_{m+n,A\cdot W_n}(f)+\frac12\mcD_{m+n,A\cdot W_n}(f\circ \Gamma_A)=\mcD_{m+n,A\cdot W_n}(f)$, which leads to a contradiction as $f$ minimizes $\mcD_{m+n,A\cdot W_n}(\cdot )$ among functions in $l(A\cdot W_n)$ that are $0$ on $w^{(1)}_A\cdot W_n$ and $1$ on $w^{(2)}_A\cdot W_n$. Using the $\Gamma_A$-antisymmetry of $f-\frac 12$, we can show
\begin{equation}\label{eqn4add5}
f|_{A_1\cdot W_n}\leq \frac{1}{2},\quad f|_{A_2\cdot W_n}\geq \frac{1}{2}.
\end{equation}
We prove \eqref{eqn4add5} by contradiction. For short, we denote $A_c=\{v\in A_1\cdot W_n:v\stackrel{m+n}\sim A_2\cdot W_n\}\cup \{v\in A_2\cdot W_n:v\stackrel{m+n}\sim A_1\cdot W_n\}$, where $c$ means ``cross''. Assume \eqref{eqn4add5} does not hold, we can define $g=(f\wedge \frac{1}{2})\cdot 1_{A_1\cdot W_n}+(f\vee \frac{1}{2})\cdot 1_{A_2\cdot W_n}$, and see that
\[\begin{aligned}
\mcD_{m+n,A\cdot W_n}(g)
=&\sum_{v\stackrel{m+n}\sim v':\{v,v'\}\subset A\cdot W_n,\atop \{v,v'\}\not\subset A_c}\big(g(v)-g(v')\big)^2
+\sum_{v\in (A_1\cdot W_n)\cap A_c}\big(g(v)-g(\Gamma_A v)\big)^2\\
&+\sum_{v\stackrel{m+n}\sim v':\{v,v'\}\subset (A_1\cdot W_n)\cap A_c}\sum_{l,l'=0,1}\big(g(\Gamma_A^{l'}v)-g(\Gamma_A^lv')\big)^2\\\leq& \mcD_{m+n,A\cdot W_n}(f),
\end{aligned}\]
where the first and third summations are about unordered pairs $\{v,v'\}$, for terms in the first summation, $|f(v)-f(v')|\geq |g(v)-g(v')|$ since either $\{v,v'\}\subset A_1\cdot W_n$ or $\{v,v'\}\subset A_2\cdot W_n$; for the second summation, $|f(v)-f(\Gamma_Av)|\geq |g(v)-g(\Gamma_A v)|$ since $f-\frac{1}{2}$ is $\Gamma_A$-antisymmetric; for the last summation, it always holds $\sum_{l=0,1}|f(v)-f(\Gamma_A^{l}v')|^2\geq \sum_{l=0,1}|g(v)-f(\Gamma_A^{l}v')|^2\geq \sum_{l=0,1}|g(v)-g(\Gamma_A^{l}v')|^2$ and $\sum_{l=0,1}|f(\Gamma_Av)-f(\Gamma_A^{l}v')|^2\geq \sum_{l=0,1}|g(\Gamma_Av)-g(\Gamma_A^{l}v')|^2$ still since $f-\frac{1}{2}$ is $\Gamma_A$-antisymmetric. This gives (\ref{eqn4add5}) since $f$ is unique. By (\ref{eqn4add5}), we have $\mcD_{m+n,A\cdot W_n}(f\vee \frac{1}{2})=\mcD_{m+n,A\cdot W_n}(f\wedge \frac{1}{2})\geq\frac{1}{4}\tilde R_{n,2^s+2}^{-1}$, whence
\[\tilde{R}_{n,2^{s+1}+2}^{-1}=\mcD_{m+n,A\cdot W_n}(f)\geq \mcD_{m+n,A\cdot W_n}(f\vee \frac{1}{2})+\mcD_{m+n,A\cdot W_n}(f\wedge \frac{1}{2})\geq 2\cdot \frac{1}{4}\tilde R_{n,2^s+2}^{-1},\]
where to see the first inequality it suffices to notice the following estimate concerning the cross energy: $(f(v)-f(v'))^2\geq (f(v)-\frac{1}{2})^2+(f(v')-\frac{1}{2})^2$ when $v\stackrel{m+n}{\sim} v',v\in A_1\cdot W_n,v'\in A_2\cdot W_n$, because $f(v)\leq \frac{1}{2}\leq f(v')$ by (\ref{eqn4add5}).
	
Thus, $R_n(W_{n,2},W_{n,4})\geq \frac{1}{2}\tilde{R}_{n,3} \geq\frac{1}{2^{s+1}}\tilde{R}_{n,2^s+2}$, where the first inequality follows by the case $l = 3$, and the second inequality follows by iterating (\ref{iteration}). Finally, for general $l\geq 3$, the desired estimate follows from the monotonicity of $\tilde{R}_{n,l}$,
\[R_n(W_{n,2}, W_{n,4})\geq \frac{1}{2^{s+2} }\tilde R_{n,2^{s+1}+2}\geq \frac{1}{2^{s+2}}\tilde R_{n,l}\geq \frac {1}{4l}\tilde R_{n,l},\]
providing $2^s+2\leq l<2^{s+1}+2$.
\end{proof}

Another useful observation is that $\sigma_{n+m}$ and $\sigma_n$ are comparable if $m$ is small. In particular, $\sigma_{n+m}\leq 16\cdot\sigma_m\sigma_n$ is another inequality in Theorem 2.1 of Kusuoka and Zhou's paper \cite{KZ}.

\begin{lemma}\label{lemma4add3}
${\frac{N^m}{6k^m}\sigma_n\leq\sigma_{n+m}\leq 16\cdot\sigma_m\sigma_n}\hbox{ for every } m,n\geq 1$.
\end{lemma}
\begin{proof}
For the left side of the inequality, we choose $m'\geq 1, w\stackrel{m'}{\sim} w'$ and $f\in l\big(\{w,w'\}\cdot W_n\big)$ so that $\mcD_{m'+n,\{w,w'\}\cdot W_n}(f)=1$ and $N^n\big([f]_{w\cdot W_n}-[f]_{w'\cdot W_n}\big)^2 = {\sigma_n(w,w') \geq \frac{1}{2}\sigma_n}$ (recall the definition of $\sigma_n$ and Remark 2 after Definition \ref{def31}). We define $f'\in l\big(\{w,w'\}\cdot W_{n+m}\big)$ with the formula $f'(v\cdot \tau)=f(v)\hbox{ for every } v\in \{w,w'\}\cdot W_n,\, \tau \in W_m$. Then one can see $\mcD_{m'+n+m,\{w,w'\}\cdot W_{n+m}}(f')\leq 3k^m$ by a routine argument as before and $N^{n+m}\big([f']_{w\cdot W_{n+m}}-[f']_{w'\cdot W_{n+m}}\big)^2= N^{m}\sigma_n(w,w') \geq  \frac{1}{2}N^m\sigma_n$, which implies  $\sigma_{n+m}\geq \frac{N^m}{2\cdot 3k^m}\sigma_n$.
	
To see the right side of the inequality, we choose $m'\geq 1, w\stackrel{m'}{\sim} w'$ and $g\in l\big(\{w,w'\}\cdot W_{n+m}\big)$ so that $\mcD_{m'+n+m,\{w,w'\}\cdot W_{n+m}}(g)=1$ and $N^{n+m}([g]_{w\cdot W_{n+m}}-[g]_{w'\cdot W_{n+m}})^2 \geq \frac{1}{2}\sigma_{n+m}$. Then we define $g'\in l\big(\{w,w'\}\cdot W_n\big)$ to be $g'(v)=[g]_{v\cdot W_m}\hbox{ for every } v\in \{w,w'\}\cdot W_n$. Then, just as in  Lemma \ref{lemma36}, one can see $\mcD_{m'+n,\{w,w'\}\cdot W_n}(g')\leq 8N^{-m}\sigma_m$ and $N^{n}([g']_{w\cdot W_n}-[g']_{w'\cdot W_n})^2 \geq \frac{1}{2}N^{-m}\sigma_{n+m}$, which implies $\sigma_n\geq \frac{N^{-m}\sigma_{n+m}}{2\cdot 8N^{-m}\sigma_m}$.
\end{proof}

Now, we prove (\ref{eqn45}).

\begin{lemma}\label{lemma48}
	There exists $C>0$ such that for any $n\geq 1$, \[R_{n}(W_{n,2},W_{n,4})=R_{n}(W_{n,1},W_{n,3})\geq C\cdot N^{-n}\sigma_n.\]
\end{lemma}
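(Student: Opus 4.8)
The plan is to connect $R_n(W_{n,2},W_{n,4})$ to the quantity $N^{-n}\sigma_n$ by exhibiting, for every pair of neighbouring cells $w\sn w'$ realizing the supremum (or nearly so) in the definition of $\sigma_n$, a chain of cells of controlled length $l$ joining one side of $\square$ to the other and passing through $w$ and $w'$, so that the resistance of that chain is comparable to $\sigma_n(w,w')$. Concretely, first I would record the two reductions already available: by Lemma \ref{lemma33} and (\ref{eqn33}) we have $\sigma_n\asymp\lambda_n$, and by Lemma \ref{lemma35} we have $\lambda_n\leq C\cdot N^{-n}\sigma_n\mcD_n(f)^{-1}\cdot(\dots)$—more usefully, the oscillation bound $|f(w)-[f]_{W_n}|^2\leq C N^{-n}\lambda_n\mcD_n(f)$ shows that a unit drop in a function across neighbouring cells forces $\mcD_n(f)\geq c N^n\lambda_n^{-1}\asymp cN^n\sigma_n^{-1}$, i.e. a lower bound on resistances of the form $R\geq cN^{-n}\sigma_n$ is what we should be aiming for. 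The equality $R_n(W_{n,2},W_{n,4})=R_n(W_{n,1},W_{n,3})$ is immediate from the $\msG$-symmetry (apply $\Gamma_{r_1}$, which swaps the relevant pairs of sides).

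The core step is to translate $\sigma_n$ into $\tilde R_{n,l}$ for a controlled $l$, and then invoke Lemma \ref{lemma47}. Given $w\sn w'$ in $W_m$ attaining $\sigma_n(w,w')\geq \tfrac12\sigma_n$, the intersection $F_wK\cap F_{w'}K$ lies on a common edge; using the (Connectivity) and (Boundary included) axioms together with \textbf{(A3)}, I would produce a chain $A\subset W_{m,1}$ (after first mapping the relevant sub-configuration by an element of $\msG$ so the shared edge becomes horizontal, and using that every cell-edge configuration sits inside a bounded-geometry neighbourhood) with $w,w'\in A$, whose endpoints $w_A^{(1)},w_A^{(2)}$ meet the left and right sides of the ambient $m$-cell, and with $\#A=l$ bounded by an absolute constant times $k^m$ — indeed $l\leq k^m$ trivially since a chain of $m$-cells along a horizontal line has at most $k^m$ of them. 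Then $\tilde R_{n,l}\geq \mcD_{m+n,A\cdot W_n}(f)^{-1}$ for the optimal $f$; restricting the energy to the single pair $\{w,w'\}\cdot W_n$ and using the definition of $\sigma_n(w,w')$ gives $\mcD_{m+n,\{w,w'\}\cdot W_n}(f)\geq N^n\sigma_n(w,w')(f'(w)-f'(w'))^2$ where $f'(\cdot)=[f]_{\cdot\,W_n}$; but $f'(w_A^{(1)})=0$, $f'(w_A^{(2)})=1$, so along the chain some consecutive pair has $|f'(w)-f'(w')|\geq 1/l$, whence $\tilde R_{n,l}\geq c\,l^{-2}\cdot N^{-n}\sigma_n^{-1}\cdot(\dots)$. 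Combining with Lemma \ref{lemma47}, which gives $R_n(W_{n,2},W_{n,4})\geq \tfrac1{4l}\tilde R_{n,l}$, yields $R_n(W_{n,2},W_{n,4})\geq c\,l^{-3}N^{-n}\sigma_n$.

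This leaves the apparent loss of a factor $l^{-3}$, with $l$ possibly as large as $k^m$ and $m$ unbounded — this is the main obstacle, and resolving it is where the strongly recurrent hypothesis $r<N/k^2$ (equivalently $N<k^2$, giving $N^{-n}\sigma_n\asymp r^{-n}$ with $r<1$, so $\sigma_n$ grows geometrically while the relevant chain lengths inside a \emph{fixed} level-$n$ configuration are bounded by a constant depending only on $k$) must be used. The point is that in the definition of $\sigma_m=\sup\{\sigma_m(w,w'):w\sn w'\}$ and in $R_n(W_{n,2},W_{n,4})$ the ambient level is $n$ itself, not an arbitrary large $m$: for $w,w'\in W_n$ the chain $A$ joining $L_1$-side to $L_3$-side through $w,w'$ within a \emph{bounded neighbourhood} has $\#A\leq C(k)$, a constant. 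More precisely I would not take $A$ to be a full horizontal traverse but only a bounded-length chain within $\mathcal N_w$-type neighbourhoods, and iterate the self-similar "series" estimate (\ref{iteration}) of Lemma \ref{lemma47} across scales rather than within one scale; at each scale the local chain length is $O(k)$, and the $\log$-scale recursion $\tilde R_{n,2l}\leq 2\tilde R_{n,l}$ together with $\sigma_n\asymp r^{-n}$, $r<1$, absorbs the polynomial-in-$l$ losses because the number of scales is $O(n)$ but each contributes a bounded multiplicative factor while $\sigma_n$ itself is exponential in $n$. I expect the clean write-up to run: reduce to a single neighbouring pair via $\sigma_n(w,w')$, bound the local chain length by a $k$-dependent constant, apply Lemma \ref{lemma47} with $l=O(1)$, and conclude $R_n(W_{n,2},W_{n,4})\geq c(k)N^{-n}\sigma_n$, which is exactly (\ref{eqn45}).
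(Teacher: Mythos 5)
There is a genuine gap, and it sits exactly where the paper's proof does its real work. Your core step misuses the definition of $\sigma_n(w,w')$: since it is a supremum, what it gives you is the \emph{existence} of one function on $\{w,w'\}\cdot W_n$ with unit energy and mean gap $\sqrt{N^{-n}\sigma_n}$; for an arbitrary function (such as the optimizer for $\tilde R_{n,l}$) the definition only yields $N^n\big([f]_{w\cdot W_n}-[f]_{w'\cdot W_n}\big)^2\leq \sigma_n(w,w')\,\mcD_{m+n,\{w,w'\}\cdot W_n}(f)$, i.e.\ a lower bound on its energy of order $N^n l^{-2}\sigma_n^{-1}$, which produces an \emph{upper} bound $\tilde R_{n,l}\leq l^2N^{-n}\sigma_n$ and is useless for the lemma (your displayed inequality has the direction reversed, and the bound you then quote, $\tilde R_{n,l}\gtrsim l^{-2}N^{-n}\sigma_n^{-1}$, is not even the right quantity). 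Since $R_n(W_{n,2},W_{n,4})$ is the reciprocal of a minimal energy, the correct strategy is to \emph{exhibit} a low-energy test function, and the natural candidate is the near-optimizer for $\sigma_n$; the whole difficulty is to localize where that function drops.

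Second, your reduction to a bounded chain does not work as described. In Definition \ref{def31}(c) the supremum defining $\sigma_n$ runs over neighbouring pairs at \emph{every} level, so you may not assume $w,w'\in W_n$; the pair need not lie in a bottom row, whereas $\tilde R_{n,m,A}$ is only defined for chains $A\subset W_{m,1}$; and a chain of $n$-cells joining $L_2$ to $L_4$ through a prescribed pair has length of order $k^n$, not $C(k)$, so ``local chain length $O(1)$'' is false as stated. The proposed multi-scale repair also fails quantitatively: losing a bounded factor per scale over $O(n)$ scales is an exponential loss, which cannot be absorbed since the target is exactly $R_n\geq c\,N^{-n}\sigma_n$; and invoking $N^{-n}\sigma_n\asymp r^{-n}$ is circular, because that equivalence (Lemma \ref{lemma33}) presupposes \textbf{(B)}, the very condition this lemma is feeding. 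What the paper does instead, and what is missing from your proposal, is a localization argument: take the near-optimal $f'$ for $\sigma_n$, set $f=f'\circ F_w\in l(W_n)$, so that $[f]_{W_n}\geq\frac12\sqrt{N^{-n}\sigma_n}$ while $f\leq\frac12$ at some boundary word, and then use the oscillation bound of Lemma \ref{lemma35} (which needs only $\lambda_n\leq C\sigma_n$ from (\ref{eqn33})) at two auxiliary scales $M_1,M_2$ chosen \emph{independently of $n$} to trap a definite fraction of the drop between two $M_2$-cells touching a common side of the square. Only then does one obtain a chain of length $l\leq k^{M_2}=O(1)$, apply Lemma \ref{lemma47}, and pass from level $n-M_1-M_2$ back to $n$ at the cost of an $(M_1,M_2)$-dependent constant. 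Without this localization step your argument does not close.
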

\begin{proof}
We will prove the lemma for large $n$ only. Then by adjusting the constant $C$, we can make the equation hold for each $n\geq 1$, noticing that $R_n(W_{n,2},W_{n,4})$ (since the minimal energy is always achieved by some non-constant function) and $\sigma_n$ for $n\geq 1$ are all positive and finite. We will fix two positive integers $M_1,M_2$, and prove the desired estimate for $n-M_1-M_2$ for each large $n$.\vspace{0.2cm}

\noindent\emph{The integer $M_1$.} By equations \eqref{eqn31}, \eqref{eqn32} and \eqref{eqn33},
there is $M_1$ such that
\begin{equation}\label{eqn413}
C_1\cdot N^{-n+M_1}\lambda_{n-M_1}<\frac{1}{2}\cdot 12^{-2}\cdot N^{-n}\sigma_n\quad\hbox{ for every } n\geq M_1+1,
\end{equation}
where $C_1$ is the constant in Lemma \ref{lemma35}. \emph{We fix such a $M_1$ in the rest of the proof.}

In fact, for each $n>m\geq 1$, we see that $N^{-n+m}\lambda_{n-m}\leq C_2 R_{m}^{-1}\cdot N^{-n}\lambda_n \leq C_3(k^2N^{-1})^{-m}\cdot N^{-n}\sigma_n$ for some $C_2, C_3 > 0$ depending only on the $\USC$ $K$, where the first inequality follows by equation \eqref{eqn31} and the second inequality follows by equations \eqref{eqn32} and \eqref{eqn33}. It suffices to pick $M_1$ large enough so that $C_1C_3(k^{-2}N)^{M_1}<2\cdot 24^{-2}$.\vspace{0.2cm}

\noindent\emph{The integer $M_2$.} By the same reason as in the last paragraph and noticing that $M_1$ is a fixed integer depending only on $K$ now, we can find $M_2$ such that
\begin{equation}\label{eqn414}
C_1\cdot N^{-n+M_1+M_2}\lambda_{n-M_1-M_2}<\frac{1}{2}({96}N^{M_1})^{-2}\cdot N^{-n}\sigma_n {\quad \hbox{ for every } n \geq M_1 + M_2 + 1,}
\end{equation}
where $C_1$ is the constant in Lemma \ref{lemma35}. \emph{We fix such a $M_2$ in the rest of the proof.}\vspace{0.2cm}

Now we return to the proof. We fix a large $n$ so that
\begin{equation}\label{eqn415}
N^{M_1}\leq  {\frac1{24}}\sqrt{N^{-n}\sigma_n},\ \text{ and }\  n\geq M_1+M_2+1.
\end{equation}
In fact, $\sqrt{N^{-n}\sigma_n}\geq {C_4}(N^{-1}k^{2})^{n/2}$ for some ${C_4 > 0}$ by \eqref{eqn32} and \eqref{eqn33}, so there is $n_0>0$ such that \eqref{eqn415} holds for each $n\geq n_0$. This justifies the meaning of large $n$ that we mentioned at the beginning.
	
Next, by using Remark 2 after Definition \ref{def31}, we choose $w,w'$ in $W_*$ so that $\sigma_n(w,w')\geq \frac{1}{4}\sigma_n$, and choose $f'\in l\big(\{w,w'\}\cdot W_n\big)$ such that $\mcD_{n+|w|,\{w,w'\}\cdot W_n}(f')=1$ and $[f']_{w\cdot W_n}-[f']_{w'\cdot W_n}={\sqrt{N^{-n}\sigma_n(w,w')} \geq \frac{1}{2}\sqrt{N^{-n}\sigma_n}}$. Without loss of generality, we assume $f'$ is antisymmetric with respect to the central point of $\Psi_wK\cup \Psi_{w'}K$ (noticing that $\tilde f':=\frac{f'-\Gamma_{w,w'} f'}{2}$ will decrease the energy, but keep $[\tilde f']_{w\cdot W_n}-[\tilde f']_{w'\cdot W_n}=[f']_{w\cdot W_n}-[f']_{w'\cdot W_n}$, where $\Gamma_{w,w'}$ is the associated rotation). Thus, by letting $f=f'\circ \Psi_w$, we get a function $f\in l(W_n)$ satisfying
\begin{equation}\label{eqn416}
[f]_{W_n}{\geq \frac{1}{4}\sqrt{N^{-n}\sigma_n}},\quad\mcD_n(f)\leq \frac{1}{2}.
\end{equation}
In addition, there is $\tau'\in \partial W_n$ such that $f(\tau')\leq \frac{1}{2}$ (it suffices to choose $\tau'$ so that $\Psi_{w\cdot\tau'}K$  contains the central point of $\Psi_wK\cup \Psi_{w'}K$). We also choose $\tau\in W_n$ such that
\[f(\tau)=\max_{v\in W_n}f(v)\geq[f]_{W_n},\]
where we use the fact that, by \eqref{eqn415} and \eqref{eqn416}, $[f]_{W_n}\geq 6N^{M_1}>1$ for large $n$.

\begin{figure}[htp]
\includegraphics[width=11cm]{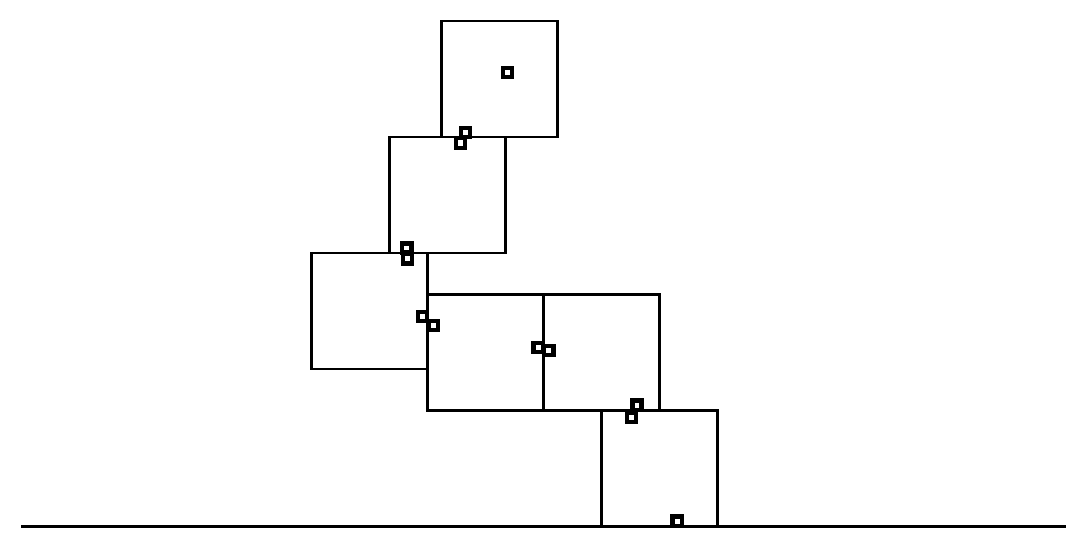}
\caption{The sequence $\tau^{(0)}\cdots \tau^{(2L_n-1)}$ represented by small squares, where we use big squares to represent the associated $M_1$-cells.}
\label{figure3}
\end{figure}
	
Next, we choose a sequence of words $\tau=\tau^{(0)},\cdots,\tau^{(2L_n-1)}=\tau'\in W_n$ such that
\[\begin{cases}
\tau^{(2\ell)}_1\cdots \tau^{(2\ell)}_{M_1}=\tau^{(2\ell+1)}_1\cdots \tau^{(2\ell+1)}_{M_1},&\text{ for }0\leq \ell<L_n,\\
\tau^{(2\ell+1)}\sn \tau^{(2\ell+2)},&\text{ for }0\leq \ell< L_n-1,
\end{cases}\]
and
\[\tau^{(2\ell)}_1\cdots \tau^{(2\ell)}_{M_1}\neq \tau^{(2\ell')}_1\cdots \tau^{(2\ell')}_{M_1}\quad \hbox{ for every } 0\leq \ell\neq \ell'<L_n,\]
where we write each $\tau^{(i)}$ in the form $\tau^{(i)}_1\cdots \tau^{(i)}_n$ with $\tau^{(i)}_j\in \{1,\cdots, N\}$, $1 \leq j \leq n$. Clearly, $L_n\leq N^{M_1}=\#W_{M_1}$. See Figure \ref{figure3} for an illustration for this sequence.

By \eqref{eqn413}, \eqref{eqn416} and Lemma \ref{lemma35}, we see that $|f(\tau^{(0)})-f(\tau^{(1)})|
\leq{2\sqrt{C_1N^{-n+M_1}\lambda_{n - M_1}\mcD_{n}(f)}}\leq \sqrt{\frac{1}{12^2}N^{-n}\sigma_n} \leq \frac{1}{3}[f]_{W_n}$, so $f(\tau^{(1)})>\frac{2}{3}[f]_{W_n}$ by the choice of $\tau=\tau^{(0)}$. In addition, we notice that $f(\tau^{(2L_n-1)})\leq \frac{1}{2}$ and $\big|f(\tau^{(2\ell+1)})-f(\tau^{(2\ell+2)})\big|\leq 1\hbox{ for every } 0\leq\ell<L_n-1$ by the energy estimate in \eqref{eqn416}, so
\[\begin{aligned}
&\frac{2}{3}[f]_{W_n}<f(\tau^{(1)})\leq \sum_{i=1}^{2L_n-2}|f(\tau^{(i)})-f(\tau^{(i+1)})|+|f(\tau^{(2L_n-1)})|\\
\leq&\sum_{\ell=1}^{L_n-1}|f(\tau^{(2\ell)})-f(\tau^{(2\ell+1)})|+(L_n-1)+\frac{1}{2}\leq \sum_{\ell=1}^{L_n-1}|f(\tau^{(2\ell)})-f(\tau^{(2\ell+1)})|+\frac{1}{6}[f]_{W_n},
\end{aligned}\]
where in the last inequality we use \eqref{eqn415}, \eqref{eqn416} and $L_n\leq N^{M_1}$. As a consequence we can find $1\leq \ell<L_n$ such that $\big|f(\tau^{(2\ell)})-f(\tau^{(2\ell+1)})\big|\geq \frac{1}{2(L_n-1)}[f]_{W_n}\geq \frac{1}{2}N^{-M_1}[f]_{W_n}$. Now we fix this $\ell$ and consider
\[g=f\circ \Psi_{\tau^{(2\ell)}_1\cdots \tau^{(2\ell)}_{M_1}}\in l(W_{n-M_1}).\]
Clearly, $\mcD_{n-M_1}(g) \leq \mathcal{D}_{n}(f)\leq \frac12$, and we can choose $\eta,\eta'$ from
\[\big\{\tau_{M_1+1}^{(2\ell)}\cdots \tau_n^{(2\ell)},\text{ } \tau_{M_1+1}^{(2\ell+1)}\cdots \tau_n^{(2\ell+1)}\big\}\bigcup \big\{w\in W_{n-M_1}:q_i\in \Psi_wK \text{ for some }1\leq i\leq 4\big\},\]
such that $g(\eta)-g(\eta')\geq \frac{1}{6}N^{-M_1}[f]_{W_n}$ and $\{\eta,\eta'\}\subset W_{n-M_1,j}$ for some $1\leq j\leq 4$. Again, by Lemma \ref{lemma35} and the choice of $M_2$ \eqref{eqn414}, we have
\[\big|g(v)-g(\eta)\big|\leq \frac{1}{24}N^{-M_1}[f]_{W_n},\quad \big|g(v')-g(\eta')\big|\leq \frac{1}{24}N^{-M_1}[f]_{W_n},\]
for any $v,v'\in W_{n-M_1}$ such that $v_1\cdots v_{M_2}=\eta_1\cdots\eta_{M_2}$ and $v'_1\cdots v'_{M_2}=\eta'_1\cdots \eta'_{M_2}$. Thus, by letting $\kappa=\eta_1\cdots \eta_{M_2}$ and $\kappa'=\eta'_1\cdots\eta'_{M_2}$, we have
\[\min_{v\in \kappa\cdot {W}_{n-M_1-M_2}}g(v)-\max_{v'\in \kappa'\cdot {W}_{n-M_1-M_2}}g(v')\geq g(\eta)-g(\eta')-2\cdot\frac{1}{24}N^{-M_1}[f]_{W_n}\geq \frac{1}{48}N^{-M_1}\sqrt{N^{-n}\sigma_n}.\]
Observing that $\kappa,\kappa'\in W_{M_2,j}$ and $\mcD_{n-M_1-M_2}(g)\leq \frac12$, by Remark 1 after Definition \ref{def31}, we conclude that $\tilde R_{n-M_1-M_2,l}\geq 2\cdot (\frac{1}{48}N^{-M_1})^2\cdot N^{-n}\sigma_n$, where $l$ is the length of the finite chain in $W_{M_2,j}$ connecting $\kappa, \kappa'$. Since $l$ is bounded above by $k^{M_2}$,  by using Lemma \ref{lemma47}, one can see that $R_{n-M_1-M_2}(W_{n-M_1-M_2,2},W_{n-M_1-M_2,4})\geq C_5\cdot N^{-n}\sigma_n$ for some $C_5>0$ depending on $M_1,M_2$.
	
Finally, we finish the proof by applying Lemma \ref{lemma4add3} to see that
\[R_{n-M_1-M_2}(W_{n-M_1-M_2,2}, W_{n-M_1-M_2,4})\geq C\cdot N^{-n+M_1+M_2}\sigma_{n-M_1-M_2}\]
for some $C>0$ depending only on $M_1,M_2$ (and $K$).
\end{proof}
Finally, we prove (\ref{eqn44}) using Lemma \ref{lemma48}.
\begin{corollary}\label{coro49}
There exists $C>0$ such that $C\cdot N^{-n}\sigma_{n}\geq (\frac{2}{k})^mN^{-n-m}\sigma_{n+m}\hbox{ for every } n,m\geq 1$.
\end{corollary}
\begin{proof}
We can find $2^m$ disjoint chains of $m$-cells connecting $W_{m,2}$ and $W_{m,4}$: for each $\bm{e}=e_1e_2\cdots e_m\in \{1,3\}^m:=\{e_1e_2\cdots e_m:e_l=1\text{ or }3\hbox{ for every } 1\leq l\leq m\}$, we consider
\[
\prod_{l=1}^m W_{1,e_l}:=\{w\in W_m: w_l\in W_{1,e_l}\hbox{ for every } 1\leq l\leq m\}.
\]
We write $\prod_{l=1}^m W_{1,e_l}=\{w_{\bm{e},1},w_{\bm{e},2},\cdots,w_{\bm{e},k^m}\}$, where we order the cells from left to right so that $w_{\bm{e},1}\in W_{m,4}, w_{\bm{e},k^m}\in W_{m,2}$ and $w_{\bm{e},l}\sm w_{\bm{e},l'}$ if and only if $|l-l'| = 1$. Then, for each $h\in l(W_m)$ such that $h|_{W_{m,4}}=0$ and $h|_{W_{m,2}}=1$, we have the lower bound estimate of $\mcD_{m}(h)$,
\[\begin{aligned}
\mcD_m(h)&\geq \sum_{\bm{e}\in \{1,3\}^m}\sum_{l=1}^{k^m-1}\big(h(w_{\bm{e},l+1})- h(w_{\bm{e},l})\big)^2\\
&\geq \sum_{\bm{e}\in \{1,3\}^m}\frac{1}{k^m-1}\Big(\sum_{l=1}^{k^m-1}\big(h(w_{\bm{e},l+1})- h(w_{\bm{e},l})\big)\Big)^2\\
&=\frac{1}{k^m-1}\sum_{\bm{e}\in \{1,3\}^m}\big(h(w_{\bm{e},1})-h(w_{\bm{e},k^m})\big)^2 =\frac{2^m}{k^m-1}>\frac{2^m}{k^{m}},
\end{aligned}\]
which means that $R_m(W_{m,2},W_{m,4})\leq (\frac{k}{2})^m$. The lemma then follows by $\sigma_{n + m} \leq 16\sigma_n\sigma_m \leq 16 C_1\sigma_n N^{m}R_m(W_{m,2},W_{m,4}) \leq 16 C_1\sigma_n N^m(\frac{k}{2})^m$ for some $C_1 > 0$, using Lemma \ref{lemma4add3} and Lemma \ref{lemma48}.
\end{proof}

\begin{proof}[Proof of \textbf{(B)}]
The condition \textbf{(B)} is a consequence of Proposition \ref{prop46}, Lemma \ref{lemma48} and Corollary \ref{coro49}.
\end{proof}

\section{Self-similar forms}\label{sec5}
In this section, we prove the existence of a self-similar Dirichlet form on $\USC$. The proof is based on the existence of a limit form $(\bar{\mathcal{E}},\bar{\mathcal{F}})$ in Theorem \ref{thm34}.

\begin{definition}\label{def51}
Let $K$ be a $\USC$ and $\mu$ be the normalized $d_H$-dimensional Hausdorff measure on $K$. A regular Dirichlet form $(\mcE,\mcF)$ on $K$ is called \emph{self-similar}, if
\[\mathcal{F}\cap C(K) = \{f\in C(K): f\circ \Psi_i\in \mathcal{F}\hbox{ for every } 1 \leq i \leq N\}\]
and the self-similar identity of the energy holds,
\begin{equation}
\mcE(f)=\sum_{i=1}^N r^{-1}\mcE(f\circ \Psi_i)\quad\hbox{ for every } f\in \mcF,
\end{equation}
where $r>0$ is called the \emph{renormalization factor}.
\end{definition}

In Kusuoka-Zhou's original strategy \cite{KZ}, the existence of $(\mathcal E,\mathcal F)$ follows by two steps: first, they construct a local regular Dirichlet form $(\bar{\mathcal E},\bar{\mathcal F})$ which is a  limit of $ r^{-n}{\mathcal D}_n$ (recall the constant $r$ in Lemma \ref{lemma33}); then, they construct $(\mathcal E,\mathcal F)$ from $(\bar{\mathcal{E}}, \bar{\mathcal{F}})$ by the way of taking a subsequential limit of Ces\`{a}ro means. The first step has been accomplished in Theorem \ref{thm34}.

\begin{theorem}\label{thm52}
Let $K$ be a $\USC$ and $\mu$ be the $d_H$-dimensional normalized Hausdorff measure on $K$. There is a strongly local, regular, irreducible, $D_4$-symmetric, self-similar Dirichlet from $(\mcE,\mcF)$ on $L^2(K,\mu)$.

In addition, $\mathcal{F} = \bar{\mathcal{F}}$ and there are constants $C,C'>0$ such that \[C\cdot\bar{\mcE}(f)\leq\mcE(f)\leq C'\cdot\bar{\mcE}(f)\quad \hbox{ for every } f\in\mcF,\] where $(\bar \mcE,\bar{\mcF})$ is the same as that in Theorem \ref{thm34}.
\end{theorem}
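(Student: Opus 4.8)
\textbf{Proof proposal for Theorem \ref{thm52}.}

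The plan is to produce $(\mcE,\mcF)$ as a fixed point of a renormalization map acting on the cone of Dirichlet forms comparable to $(\bar{\mcE},\mcF)$, exploiting that $\mcF$ is already known (from Theorem \ref{thm34}) to be a fixed function space with $\mcF\subset C(K)$, and that $\bar{\mcE}$ satisfies the key two-sided bound $C_1\sup_n r^{-n}\mcD_n\leq\bar{\mcE}\leq C_2\liminf_n r^{-n}\mcD_n$. First I would set up the renormalization operator: for a closed symmetric form $\mcE'$ on $L^2(K,\mu)$ with domain $\mcF$, define
\[
(\Phi\mcE')(f)=\inf\Big\{\sum_{i=1}^N r^{-1}\mcE'(g\circ F_i): g\in\mcF,\ g|_{\partial_0 K}=f|_{\partial_0 K}\Big\}
\]
(more precisely a trace/infimum construction over all extensions that are consistent on overlaps of the cells $F_iK$). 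The overlaps here are line segments and points rather than finite point sets, so the gluing must be done carefully; this is exactly where the second half of Definition \ref{def51}(a) and Remark 3 enter — a continuous function whose restriction to each cell lies in $\mcF$ must itself lie in $\mcF$. I would check that $\Phi$ maps the order interval $\{\mcE': C\bar{\mcE}\leq\mcE'\leq C'\bar{\mcE}\}$ into itself for suitable $C,C'$: the lower bound on $\Phi\mcE'$ comes from the discrete estimate $r^{-n}\mcD_n(f)\lesssim\bar\mcE(f)$ applied one level down together with the self-similar decomposition $\mcD_{n+1}=\sum_i \mcD_n(\cdot\circ F_i)$, and the upper bound from choosing the harmonic (energy-minimizing) extension and invoking the trace/resistance estimates proved in Sections \ref{sec3}–\ref{sec4} — essentially the uniform comparability $N^nR_n\asymp\lambda_n\asymp\sigma_n$ and Lemma \ref{lemma33}, which guarantee that the renormalization constant is exactly $r$ and the energies of harmonic extensions do not blow up.

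Next I would run a compactness/fixed-point argument rather than a contraction argument. One standard route: iterate $\Phi$ starting from $\bar\mcE$, obtaining $\mcE_m=\Phi^m\bar\mcE$, all sandwiched between $C\bar\mcE$ and $C'\bar\mcE$; pass to a subsequential $\Gamma$-limit (or a Banach–Alaoglu-type weak limit on the common Hilbert space $(\mcF,\|\cdot\|_{\bar\mcE_1})$, noting all norms $\|\cdot\|_{\mcE_m,1}$ are uniformly equivalent) to get a limiting form $\mcE$, again comparable to $\bar\mcE$. The alternative, which I would actually prefer because it directly yields the self-similarity identity, is to use the $\Gamma$-convergence machinery of Appendix \ref{AppendixC}: consider the self-similar sequence of forms $\mcA_m(f)=\sum_{w\in W_m} r^{-m}\mcE_0(f\circ F_w)$ for a fixed ``seed'' form $\mcE_0$ on the boundary data, show these are uniformly comparable to $r^{-m}\mcD_m$ hence to $\bar\mcE$ on $\mcF$, and extract a $\Gamma$-limit $\mcE$; because the sequence is self-similar by construction, the limit automatically satisfies $\mcE(f)=\sum_{i=1}^N r^{-1}\mcE(f\circ F_i)$. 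The two-sided bound with $\bar\mcE$ is preserved under $\Gamma$-limits (lower-semicontinuity for one side, the recovery-sequence property plus Lemma \ref{lemma36}-type level-comparison for the other), giving the displayed estimate in the theorem.

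Once $\mcE$ is in hand, the remaining properties are comparatively routine. Symmetry and the Markov (Dirichlet) property are inherited from the $\mcD_n$ (normal contractions do not increase $\mcD_n$, hence not the $\Gamma$-limit), exactly as in the proof of Theorem \ref{thm34}(b); regularity follows since $\mcF\subset C(K)$ is unchanged and separates points (same argument as there, using the resistance lower bound $R_n\gtrsim(k^2/N)^n$). Strong locality: if $f,g\in\mcF$ with $\mathrm{supp}(f),\mathrm{supp}(g)$ disjoint compact sets, then at large enough level $n$ no $n$-cell meets both supports, and the self-similar identity localizes the energy to cells, forcing $\mcE(f+g)=\mcE(f)+\mcE(g)$ — here I must be slightly careful because cells touch along segments, but disjoint \emph{compact} supports are separated by a positive Euclidean distance, so \textbf{(A3)} applies. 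Irreducibility: $\mcE(f)=0$ implies $r^{-n}\mcD_n(f)=0$ for all $n$ by the lower bound, so $f$ is constant by \textbf{(A2)}. The domain characterization in Definition \ref{def51}(a) is built into the construction via the extension/gluing step.

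I expect the main obstacle to be the gluing step in defining $\Phi$ and in verifying the upper bound $\mcE\leq C'\bar\mcE$: because cells overlap on line segments of possibly irrational length rather than on finite sets, one cannot simply prescribe boundary values at finitely many points, and one must argue that harmonic extensions in $\mcF$ with matching traces along these segments exist and have controlled energy. This is precisely where the discrete trace/resistance estimates of Section \ref{sec4} (the bound $\mcD_n(h_n)\lesssim N^n\sigma_n^{-1}$ and Corollary \ref{coro45}) do the real work — they must be upgraded to a statement about the continuous form $\bar\mcE$, which is why the theorem is placed after Theorem \ref{thm34} rather than proved from scratch. If a clean continuous trace theorem is not yet available at this point in the paper (it is developed later in Section \ref{sec6}), I would instead keep everything at the level of the discrete forms $r^{-n}\mcD_n$, build the self-similar sequence there, and only pass to the limit at the very end, so that the only continuous input needed is the already-established comparison $\bar\mcE\asymp\sup_n r^{-n}\mcD_n$.
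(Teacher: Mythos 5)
Your second (preferred) route is essentially the paper's construction — the paper also takes the seed $\mcE_0=\bar{\mcE}$ and studies $\tilde{\mcE}_m(f)=\sum_{w\in W_m}r^{-m}\bar{\mcE}(f\circ F_w)$, proving exactly your comparability claim $C_1\, r^{-m}\mcD_m(f)\leq\tilde{\mcE}_m(f)\leq C_2\liminf_n r^{-n}\mcD_n(f)$ via Theorem \ref{thm34}(a) for the lower bound and Theorem \ref{thm34}(b) for the upper bound — but there is a genuine gap at the step where you extract the limit. You assert that ``because the sequence is self-similar by construction, the ($\Gamma$-)limit automatically satisfies $\mcE(f)=\sum_{i=1}^N r^{-1}\mcE(f\circ F_i)$.'' This is false as stated: the identity $\mcA_{m+1}(f)=\sum_i r^{-1}\mcA_m(f\circ F_i)$ relates \emph{consecutive} terms, so if $\mcA_{m_l}$ $\Gamma$-converges (or converges pointwise) to $\mcE$ along a subsequence, all you can conclude is that the subsequential limit along $\{m_l+1\}$ (which need not exist, and if it does need not coincide with $\mcE$) equals $f\mapsto\sum_i r^{-1}\mcE(f\circ F_i)$. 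Since the whole sequence is not known to converge, you do not obtain a fixed point. This is precisely why the paper (following Kusuoka--Zhou) passes to \emph{Ces\`aro means}: it chooses, by a diagonal argument over a countable $\mathbb{Q}$-dense subspace $\hat{\mcF}$, a subsequence $n_l$ along which $\mcE(f)=\lim_l \frac{1}{n_l}\sum_{m=1}^{n_l}\tilde{\mcE}_m(f)$ exists; then $\sum_i r^{-1}\mcE(f\circ F_i)-\mcE(f)=\lim_l\frac{1}{n_l}\big(\tilde{\mcE}_{n_l+1}(f)-\tilde{\mcE}_1(f)\big)=0$ because the uniform two-sided bound makes $\tilde{\mcE}_m(f)$ bounded in $m$. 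Without this averaging (or some substitute, e.g.\ convergence of the full sequence), your limit need not be exactly self-similar. The same objection applies to your fallback of working with $r^{-n}\mcD_n$ directly: those forms are only sub-self-similar (cross terms between different $1$-cells), which is exactly why Theorem \ref{thm34} alone does not give self-similarity and Theorem \ref{thm52} needs this extra construction.

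Two smaller points. First, your renormalization-operator route is not viable at this stage: as written, $(\Phi\mcE')(f)$ depends only on $f|_{\partial_0K}$, so it is a trace form on the boundary rather than a form on $L^2(K,\mu)$, and controlling energies of harmonic extensions with matching traces along segments requires the trace machinery of Section \ref{sec6}, which is developed only after (and partly from) the self-similar form; the paper deliberately avoids any such fixed-point/harmonic-extension argument here. Second, in the Ces\`aro/pointwise-limit route the Markov property is immediate (each $\tilde{\mcE}_m$ inherits it from $\bar{\mcE}$ since unit contractions commute with $f\mapsto f\circ F_w$), whereas in your $\Gamma$-limit formulation you would need to redo the recovery-sequence argument from the proof of Theorem \ref{thm34}; and the gluing direction of Definition \ref{def51}(a) does follow easily, as in the paper, from the discrete characterization $\mcF=\{f:\sup_n r^{-n}\mcD_n(f)<\infty\}$ rather than from any extension step. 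The remaining properties you list (symmetry, regularity, strong locality via self-similarity, irreducibility) are handled as in the paper once the self-similar identity is actually secured.
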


\noindent\textbf{Remark.}  Recall \cite[Theorem 6.9]{KZ}, $(\mcE,\mcF)$ is defined to be a limit of Ces\`{a}ro means of bilinear forms $(\hat{\mcE}_n,\bar{\mcF})$, $n\geq 1$ which are given by
\[\hat{\mcE}_n(f,g)=r^{-n}\sum_{w\in W_n}\sum_{i=1}^4\big(P_nf(w)-P_{n}^{L_i}f(w)\big) \big(P_ng(w)-P_{n}^{L_i}g(w)\big)\quad \hbox{ for every } f,g\in\bar{\mcF},\]
with $P_n^{L_i}f(w)=\int_{L_i}f\circ \Psi_w d\nu_i$, and $\nu_i$ the Lebesgue measure on $L_i$.
However, the Markov property of the limit form is difficult to prove. Kigami filled up this gap in \cite{ki25}   by developing a fixed point theorem of order preserving additive maps on an ordered topological cone. Using this technique, the existence of a desired Dirichlet form $(\mcE,\mcF)$ follows, especially with the Markov property (see \cite[Theorem 2.3]{ki25}). For the self-containedness of the paper, we will still provide a proof following Kusuoka-Zhou's strategy, but replacing $\hat{\mcE}_n(f,g)$ to $\sum_{w\in W_n}r^{-n}\bar \mcE(f\circ \Psi_w, g\circ \Psi_w)$, which is also essentially similar to that of Kigami \cite{ki25}.

\begin{proof}
Recall the form $(\bar{\mcE},\bar{\mcF})$ in Theorem \ref{thm34}, and notice that $\bar{\mcF}\subset C(K)$. For $n\geq 0$, define
\[\tilde{\mcE}_n(f)=\sum_{w\in W_n}r^{-n}\bar{\mcE}(f\circ \Psi_w)\quad\hbox{ for every } f\in \mcF_n,\]
where $\mcF_n=\{f\in C(K):f\circ \Psi_w\in \bar{\mcF}\hbox{ for every } w\in W_n\}$. It is straightforward to check that $\bar{\mcF}\subset\mcF_1$. In fact, $\sup_{m\geq 0}r^{-m}\mcD_m(f\circ \Psi_i)<\infty$ for each $f\in\bar{\mcF}$ and ${i\in W_1}$, since $\mcD_{m+1}(f)\geq \sum_{i\in W_1}\mcD_m(f\circ \Psi_i)\hbox{ for every } m\geq 0$. It then follows easily that $\bar{\mcF}=\mcF_0\subset\mcF_1\subset\mcF_2\subset\cdots$, since if $f\in \mcF_n,n\geq 0$, then $f\circ \Psi_w\in \bar{\mcF}\subset \mcF_1\hbox{ for every } w\in W_n$, and hence $f\circ \Psi_w\circ\Psi_i\in \bar{\mcF}\hbox{ for every } w\in W_n,\, i\in W_1$, which implies $f\in \mcF_{n+1}$.\vspace{.2cm}

Next we claim that
\begin{equation}\label{eqn52}
C_1\cdot r^{-n}\mcD_n(f)\leq \tilde{\mcE}_n(f)\leq C_2\cdot \liminf_{m\to\infty}r^{-m}\mcD_m(f)\quad\hbox{ for every } f\in\mcF_n,
\end{equation}
for some $C_1,C_2>0$ independent of $n$. On one hand,  by Theorem \ref{thm34}, there is $C_2>0$ such that $\bar{\mcE}(f)\leq C_2\cdot\liminf\limits_{m\to\infty}r^{-m}\mcD_m(f)$ for any $f\in \bar{\mcF}$. It follows from the definition of $\mcD_m$, for any $f\in \mcF_n$, we have
\[\begin{aligned}
\tilde{\mcE}_n(f)=\sum_{w\in W_n}r^{-n}\bar{\mcE}(f\circ \Psi_w)&\leq C_2\cdot\liminf_{m\to\infty}\sum_{w\in W_n}r^{-n-m}\mcD_m(f\circ \Psi_w)\\
&\leq C_2\cdot\liminf_{m\to\infty}r^{-n-m}\mcD_{n+m}(f)\\
&= C_2\cdot\liminf_{m\to\infty}r^{-m}\mcD_m(f).
\end{aligned}\]
On the other hand, still by Theorem \ref{thm34}, $\bar{\mcF}\subset C(K)$ and $\big|P_0f-f(x)\big|^2\leq C_3\cdot\bar{\mcE}(f)\hbox{ for every } x\in K, \, f\in \bar{\mcF}$, for some $C_3>0$, where $P_0f=\int_K f(y)\mu(dy)$. Thus, for any $f\in \mcF_n$ and $w\sn w'$ in $W_n$,
\[\big|(P_nf)(w)-(P_nf)(w')\big|^2\leq 2C_3\cdot\big(\bar{\mcE}(f\circ \Psi_w)+\bar{\mcE}(f\circ \Psi_{w'})\big),\]
where $(P_nf)(w)=N^{n}\int_{\Psi_wK}f(y)\mu(dy)$ as in previous sections. By summing over all $w\sn w'$, we get $C_1\cdot r^{-n}\mcD_n(f)\leq \tilde{\mcE}_n(f)$ for some $C_1>0$.

As a consequence of (\ref{eqn52}), we have
\begin{equation}\label{eqn53}
C_1\cdot \sup\limits_{m\geq n}r^{-m}\mcD_m(f)\leq \sup_{m\geq n}\tilde\mcE_m(f)\leq C_2\cdot \liminf\limits_{m\to\infty}r^{-m}\mcD_m(f)\quad\hbox{ for every } f\in\mcF_{n}, \, n\geq0.
\end{equation}

Now, let's show $\bar{\mcF}=\mcF_1=\mcF_2\cdots$. For each $n\geq 1$, one can see
\[\begin{aligned}
\bar{\mcF}=\{f\in \mcF_n:\sup_{m\geq n}r^{-m}\mcD_m(f)<\infty\}&=\{f\in \mcF_n:\sup_{m\geq n}\tilde\mcE_m(f)<\infty\}\\
&=\{f\in \mcF_n:\sup_{m\geq n}\sum_{w\in W_n}r^{-n}\tilde\mcE_{m-n}(f\circ\Psi_w)<\infty\}\\
&=\{f\in \mcF_n:\sup_{m\geq 0}\tilde\mcE_m(f\circ \Psi_w)<\infty\hbox{ for every } w\in W_n\}\\
&=\{f\in \mcF_n:\sup_{m\geq 0}r^{-m}\mcD_m(f\circ \Psi_w)<\infty\hbox{ for every } w\in W_n\}=\mcF_n,
\end{aligned}\]
where the second and fifth equalities follow from \eqref{eqn53}.\vspace{.2cm}
	
From now on, we take $\mathcal{F} = \bar{\mathcal{F}}$. Let $\hat{\mcF}$ be a $\mathbb{Q}$-vector subspace of $\mcF$ with countable number of elements, that is dense in $\mcF$ with respect to the norm $\|f\|_{\bar{\mcE}_1}:=\sqrt{\bar{\mcE}(f)+\|f\|^2_{L^2(K,\mu)}}$. To achieve this, one can simply choose a $\mathbb{Q}$-vector dense subspace $H$ of $L^2(K,\mu)$ with countable number of elements, and let $\hat{\mcF}=U_1(H)$, where $U_1$ is the resolvent operator associated with $\bar{\mcE}_1$, i.e. $\bar{\mcE}_1(U_1f,g)=\int_{K}fgd\mu$, for any $f\in L^2(K,\mu)$ and $g\in \mcF$.
	
Then by a diagonal argument, there is a subsequence $\{n_l\}_{l\geq 1}$ such that the limit
\begin{equation}\label{eqn54}
\mcE(f):=\lim\limits_{l\to\infty} \frac{1}{n_l}\sum_{m=1}^{n_l}\tilde{\mcE}_m(f)
\end{equation}
exists for any $f\in \hat\mcF$. In addition, by (\ref{eqn52}) and Theorem \ref{thm34}, we know that there exist $C_4,C_5>0$ such that
\begin{equation}\label{eqn55}
C_4\cdot\bar{\mcE}(f)\leq \frac1n\sum_{m=1}^{n}\tilde{\mcE}_m(f)\leq C_5\cdot\bar{\mcE}(f)\quad\hbox{ for every }f\in \mcF,n\geq 1.
\end{equation}
Hence by \eqref{eqn54}, for every $f\in \hat\mcF$,
\begin{equation}\label{eqn56}
C_4\cdot\bar{\mcE}(f)\leq \mcE(f)\leq C_5\cdot\bar{\mcE}(f).
\end{equation}

Now we show that \eqref{eqn54} actually holds for every $f\in\mcF$, i.e. for $f\in \mcF$, we will prove that the limit in the right hand side of \eqref{eqn54} exists. Indeed, for any $\varepsilon>0$ and large enough $l,l'$, by using \eqref{eqn54}, we see that
\begin{align*}
&\Big|\sqrt{\frac{1}{n_l}\sum_{m=1}^{n_l}\tilde{\mcE}_m(f)}-\sqrt{\frac{1}{n_{l'}}\sum_{m=1}^{n_{l'}}\tilde{\mcE}_m(f)}\Big|\\
\leq &\sqrt{\frac{1}{n_l}\sum_{m=1}^{n_l}\tilde{\mcE}_m(f-f')}+\Big|\sqrt{\frac{1}{n_l}\sum_{m=1}^{n_l}\tilde{\mcE}_m(f')}-\sqrt{\frac{1}{n_{l'}}\sum_{m=1}^{n_{l'}}\tilde{\mcE}_m(f')}\Big|+\sqrt{\frac{1}{n_{l'}}\sum_{m=1}^{n_{l'}}\tilde{\mcE}_m(f'-f)}\\
\leq &\varepsilon,
\end{align*}
where in the second line we choose $f'\in \hat{\mcF}$ so that $\sqrt{\bar{\mcE}(f-f')}<\frac{\varepsilon}{3\sqrt{C_5}}$, and we use \eqref{eqn54} and \eqref{eqn55} in the third inequality. This immediately implies that for every $f\in \mcF$, $\mathcal E(f):=\lim_{l\to\infty}\frac{1}{n_l}\sum_{m=1}^{n_l}\tilde{\mcE}_m(f)$ exists. In addition,   \eqref{eqn56} also holds for every $f\in \mcF$ by (\ref{eqn55}).

Clearly, the limit form $\mcE$ keeps the properties: $\mcE(tf)=t^2\mcE(f)$, $\mcE(f+g)+\mcE(f-g)=2\mcE(f)+2\mcE(g)$ and $\mcE\big((f\vee0)\wedge1\big)\leq \mcE(f)$ for any $t\in\mathbb{R}$ and $f,g\in \mcF$. Hence, the functional $\mcE$ induces a densely defined closed symmetric bilinear form $(\mcE,\mcF)$ on $K$.

Moreover, since $(\frac1n\sum_{m=1}^{n}\tilde{\mcE}_m,\mcF)$ satisfies the Markov property and $D_4$-symmetry by Theorem \ref{thm34} (b) for every $n\geq 1$, the limit form $(\mcE,\mcF)$ also satisfies the Markov property and $D_4$-symmetry. Hence, $(\mcE,\mcF)$ is a $D_4$-symmetric regular Dirichlet form.
		
Next, we show the self-similarity of the form. The property $\mcF=\mcF_1=\{f\in C(K):\,f\circ \Psi_i\in\mcF\hbox{ for every }1\leq i\leq N\}$ has already been verified, and by using \eqref{eqn54} and \eqref{eqn56}, we  see that
\begin{align*}
\mcE(f)=\lim\limits_{l\to\infty}\frac{1}{n_l}\sum_{m=1}^{n_l}\tilde{\mcE}_{m}(f)=&\sum_{i=1}^N\lim\limits_{l\to\infty}\frac{1}{n_l}\sum_{m=0}^{n_l-1}r^{-1}\tilde{\mcE}_{m}(f\circ\Psi_i)\\
=&\sum_{i=1}^Nr^{-1}\lim\limits_{l\to\infty}\frac{1}{n_l}\sum_{m=1}^{n_l}\tilde{\mcE}_{m}(f\circ\Psi_i)=\sum_{i=1}^Nr^{-1}\mcE(f\circ\Psi_i)
\end{align*}
holds for every $f\in\mcF$, where the third equality is due to $\lim_{l\to \infty}\frac{1}{n_l}\tilde{\mcE}_{n_l}(f\circ \Psi_i)=0$ as a consequence of (\ref{eqn52}).
	
The strongly local property follows from the self-similarity of the form. Finally, since $\mcF\subset C(K)$, $1_A\in \mcF$ implies $\mu(A)\in \{0,1\}$. So $(\mcE,\mcF)$ is irreducible by \cite[Theorem 1.6.1]{FOT}, noticing that $1\in \mcF$.
\end{proof}

\noindent\textbf{Remark.} Since $1\in \mcF$ and $\mcE(1)=0$, by \cite[Theorem 1.6.3]{FOT}, $(\mcE,\mcF)$ is also recurrent.

\appendix
\renewcommand{\appendixname}{Appendix~\Alph{section}}

\section{Proof of Proposition \ref{prop32}}\label{AppendixA}
In this appendix, we prove Proposition \ref{prop32}. The proof is essentially the same as that in \cite[Theorem 2.1]{KZ}, with slight adjustment due to the change of condition \textbf{(A3)}. We reproduce it here for the convenience of readers. In the following context, all the positive constants $C$ appeared depend only on the $\USC$.

We begin with an observation that all the constants $\lambda_n,\sigma_n,R_n$ are positive and finite, and in addition, $R_n$ are bounded from below by a multiple of $(k^{2}N^{-1})^n$.

\begin{lemma}[\cite{KZ}, Proposition 2.7]\label{lemmaA1}
The constants $\lambda_n,\sigma_n$ and $R_n$ for $n\geq 1$ are positive and finite. In addition, there exists $C>0$ so that  (\ref{eqn32}) holds:
\[R_n\geq C\cdot (k^2N^{-1})^n\quad \hbox{ for every } n\geq 1. \]
\end{lemma}
\begin{proof}
To see $\lambda_n>0$, we let $f\in l(W_n)$ be a function that is not a constant, then $\sum_{w\in W_n}\big(f(w)-[f]_{W_n}\big)^2>0$ (as we can find $w\in W_n$ such that $f(w)\neq [f]_{W_n}$ if $f$ is not a constant) and $\mcD_{n}(f)<\infty$ (as $\mcD_n(f)$ is a finite summation of terms of the form $\big(f(w)-f(w')\big)^2$ and each term is finite). So $\lambda_n\geq \frac{\sum_{w\in W_n}(f(w)-[f]_{W_n})^2}{\mcD_{n}(f)}>0$.

To see $\sigma_n>0$, we choose $m\geq 1$ and $w\sm w'$, and let $f\in l(\{w,w'\}\cdot W_n)$ such that $f|_{w\cdot W_n}=1$ and $f|_{w'\cdot W_n}=0$. Then $[f]_{w\cdot W_n}-[f]_{w'\cdot W_n}=1$ and $\mcD_{m+n,\{w,w'\}\cdot W_n}(f)\leq 3k^n$. So $\sigma_n\geq \frac{N^n([f]_{w\cdot W_n}-[f]_{w'\cdot W_n})^2}{\mcD_{m+n,\{w,w'\}\cdot W_n}(f)}>0$.

To see that $R_n$ is finite, we choose $m\geq 1$ and $w\in W_m$. Then, by Remark 1 after Definition \ref{def31}, we see that $R_n\leq R_{m+n}(w\cdot W_n,\mathcal{N}^c_w\cdot W_n)<\infty$.
	

Next, for each $n\geq 1$, we show that $\sigma_n$ is finite. We fix $m\geq 1$, $w\sm w'$ in $W_m$ and $f\in l(W_{m+n})$ with $\mcD_{m+n, \{w,w'\}\cdot W_n}(f)=1$. We notice that $\{w,w'\}\cdot W_n$ is connected in the sense that for every distinct pair $v, v'$ in $\{w,w'\}\cdot W_n$ we can find a finite path of the form $v=v^{(1)},v^{(2)},\cdots,v^{(l)}=v'\in\{w,w'\}\cdot W_n$ such that $v^{(i)}\stackrel{m+n}{\sim} v^{(i+1)}$ for  $1\leq i<l$. In addition, we can assume that the path is \emph{self-avoiding} (by erasing loops from the original path if necessary), so $l\leq 2N^n$ as there are only $2N^n$ points in $\{w,w'\}\cdot W_n$. Thus, for each $u,u'\in W_n$, we have
\[\begin{aligned}
\big(f(w\cdot u)-f(w'\cdot u')\big)^2&=\big(\sum_{i=1}^{l-1}f(v^{(i)})-f(v^{(i+1)})\big)^2\\
&\leq (l-1)\sum_{i=1}^{l-1}\big(f(v^{(i)})-f(v^{(i+1)})\big)^2\leq 2N^n\mcD_{m+n, \{w,w'\}\cdot W_n}(f)=2N^n,
\end{aligned}\]
where we fix a self-avoiding path $w\cdot u=v^{(1)},v^{(2)},\cdots,v^{(l)}=w'\cdot u'\in\{w,w'\}\cdot W_n$ such that $v^{(i)}\stackrel{m+n}{\sim} v^{(i+1)}$ for each $i=1,2,\cdots l-1$ in the first equality, and we use the facts that $l\leq 2N^n$ and each edge is used only once (since the path is self-avoiding) in the last inequality. By taking the summation over all $u,u'\in W_n$, we get
\[\begin{aligned}
\big([f]_{w\cdot W_n}-[f]_{w'\cdot W_n}\big)^2&{=\big(\frac{1}{N^{2n}}\sum_{u,u'\in W_n}\big(f(w\cdot u)-f(w'\cdot u')\big)\big)^2}\\&\leq\frac{1}{N^{2n}}\sum_{u,u'\in W_n}\big(f(w\cdot u)-f(w'\cdot u')\big)^2\\
&\leq\frac{1}{N^{2n}}\cdot N^{2n}\cdot 2N^{n}=2N^{n}.
\end{aligned}\]
Thus $\sigma_n(w,w')\leq 2N^{2n}$ since the estimate works for each $f\in l(W_{m+n})$ with $\mcD_{m+n, \{w,w'\}\cdot W_n}(f)=1$. The same argument works for each $m\geq 1$ and $w\sm w'$, so we finally see $\sigma_n\leq 2N^{2n}$.

To see $\lambda_n$ is always finite, we take any function $f\in l(W_n)$ with $\mcD_n(f)=1$. Then for any $w,w'\in W_n$,
\[\big(f(w)-f(w')\big)^2\leq N^n\mcD_n(f)=N^n,\]
as we can find a self-avoiding path of the form $w=w^{(1)},w^{(2)},\cdots,w^{(l)}=w'\in W_n$ with $l\leq N^n$ such that $w^{(i)}\sim w^{(i+1)}\hbox{ for every }1\leq i< l$. Hence
\[\begin{aligned}
\sum_{w\in W_n}\big(f(w)-[f]_{W_n}\big)^2&=\sum_{w\in W_n}\Big(\frac{1}{N^n}\sum_{w'\in W_n}\big(f(w)-f(w')\big)\Big)^2\\
&\leq \frac{1}{N^n}\sum_{w,w'\in W_n}\big(f(w)-f(w')\big)^2\leq \frac{1}{N^n}\cdot N^{2n}\cdot N^n=N^{2n},
\end{aligned}\]
which gives that $\lambda_n$ is finite for all $n\geq 1$.
	
Now, we prove (\ref{eqn32}) by closely following the construction of \cite{KZ}. We remark that (\ref{eqn32}) can also be derived as an immediate consequence of \cite[Lemma 4.6.15]{ki5}. For $m\geq 1$, $w\in W_m$, we let
\[f'(x)=\max\big\{0,1-\frac{d(x,\Psi_wK)}{c_0k^{-m}}\big\}\in C(K),\]
where $c_0$ is the constant in \textbf{(A3)}. Then, $f'|_{\Psi_wK}=1$ and $f'|_{\bigcup_{w'\in  \mathcal{N}_w^c}\Psi_{w'}K}=0$, noticing that $d(\Psi_wK,\Psi_{w'}K)\geq c_0k^{-m}\hbox{ for every } w'\in\mathcal{N}_w^c$. We define $f=P_{m+n}f'$, i.e.
\[f(v)=N^{m+n}\int_{\Psi_vK}f'(x)\mu(dx)\quad\hbox{ for every } v\in W_{m+n}.\]
Then, if $v\stackrel{m+n}{\sim} v'$, we have
\[\big|f(v)-f(v')\big|\leq c_0^{-1}k^m\cdot\sup_{x\in \Psi_vK, y\in\Psi_{v'}K}d(x,y)\leq 2\sqrt{2}c_0^{-1}k^{-n}.\]
Thus
\[\begin{aligned}
\mcD_{m+n}(f)&=\sum_{v\stackrel{m+n}{\sim} v'}\big(f(v)-f(v')\big)^2 \\
&\leq\sum_{v\in \mathcal{N}_w\cdot W_n}\sum_{v': v'\stackrel{m+n}{\sim} v}\big(f(v)-f(v')\big)^2\leq 64N^n\cdot 8\cdot 8c_0^{-2}k^{-2n},
\end{aligned}\]
where in the first inequality we use the fact that $|f(v)-f(v')|>0$ only if $f(v)>0$ or $f(v')>0$, in the second inequality $64$ comes as an upper bound of $\#\mathcal{N}_w$ and the middle $8$ is an upper bound of the number of neighbours of $v$. Thus noticing that $f|_{w\cdot W_n}=1$ and $f|_{\mathcal{N}^c_w\cdot W_n}=0$, we have
\[R_n\big(w\cdot W_n, \mathcal{N}^c_w\cdot W_n\big)\geq 2^{-12}c_0^2(k^2N^{-1})^n.\]
This gives the desired estimate.
\end{proof}

For $l\geq 1$ and $w\in W_n, n\geq 1$, let us define
\[\begin{aligned}
\mathcal{N}_{l,w} =\big\{w'\in W_{n}:\text{there exist }w(i), 0\leq i\leq l,&w(0)=w,w(l)=w',\\
&\Psi_{w(i)}K\cap \Psi_{w(i+1)}K\neq \emptyset\hbox{ for every } 0\leq i< l\big\}.
\end{aligned}\]
In particular, note that $\mathcal{N}_{2,w}=\mathcal{N}_w$ in Definition \ref{def31} (b).

Next, we prove the inequality $\lambda_nN^m R_m\leq C\cdot \lambda_{n+m}$ in Proposition \ref{prop32}, which might be affected by the change of \textbf{(A3)}.

\begin{lemma}\label{lemmaa1}
Let $m,n\geq 1$, and $\{g_w\}_{w\in W_n}$ be a collection of non-negative functions in $l(W_{n+m})$ such that $\sum_{w\in W_n}g_w=1$ and $g_w(v)=0$ for every $w\in W_n, v\in \mathcal{N}_w^c\cdot W_m$.
	
Then for each $f\in l(W_{n})$, we have
\[\mcD_{n+m}(\tilde{f})\leq C\cdot\big(\max_{w\in W_n}\mcD_{n+m}(g_w)\big)\mcD_n(f),\]
for some constant $C>0$, where $\tilde{f}=\sum_{w\in W_n}f(w)g_w\in l(W_{n+m})$.
\end{lemma}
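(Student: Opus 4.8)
\textbf{Proof strategy for Lemma \ref{lemmaa1}.}

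The plan is to expand $\mcD_{n+m}(\tilde f)$ directly and bound each edge-contribution. For a pair $v \sn v'$ in $W_{n+m}$, write $v = w\cdot \eta$ and $v' = w'\cdot \eta'$ with $w,w'\in W_n$ and $\eta,\eta'\in W_m$. Since $v\sn v'$ forces $F_wK\cap F_{w'}K\neq\emptyset$, we have $w\sn w'$ (or $w=w'$), so $w'\in\mathcal N_w$ and $w\in\mathcal N_{w'}$. Using $\sum_{w}\psi_w\equiv 1$, I would write
\[
\tilde f(v)-\tilde f(v') = \sum_{u\in W_n}\big(f(u)-f(w')\big)\psi_u(v) - \sum_{u\in W_n}\big(f(u)-f(w')\big)\psi_u(v'),
\]
where subtracting the constant $f(w')$ is legitimate because $\sum_u(\psi_u(v)-\psi_u(v'))=0$. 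The key point is the support condition: $\psi_u(v)=0$ unless $u\in\mathcal N_w$, and $\psi_u(v')=0$ unless $u\in\mathcal N_{w'}$; in either case $u$ lies within a bounded graph-distance (at most $3$ or so) of $w$ in the cell graph $W_n$, hence $\big(f(u)-f(w')\big)^2$ is controlled by a bounded sum of squared edge-differences $\mcD_n$-terms along a short chain from $u$ to $w'$ through $w$. Since each cell in $W_n$ has at most $8$ neighbours, the number of relevant $u$ is bounded by an absolute constant.

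Then I would apply Cauchy–Schwarz to $\big(\tilde f(v)-\tilde f(v')\big)^2$, separating the $f$-differences from the $\psi_u$-differences, obtaining
\[
\big(\tilde f(v)-\tilde f(v')\big)^2 \leq C\!\!\sum_{u\in\mathcal N_w\cup\mathcal N_{w'}}\!\!\big(f(u)-f(w')\big)^2\Big(\big(\psi_u(v)\big)^2 + \big(\psi_u(v)-\psi_u(v')\big)^2\Big),
\]
or a variant thereof; the exact bookkeeping of which $\psi_u$-increment to use is routine. Summing over all edges $v\sn v'$ of $W_{n+m}$: the $\big(\psi_u(v)-\psi_u(v')\big)^2$ terms sum to at most $\mcD_{n+m}(\psi_u)\le \max_w\mcD_{n+m}(\psi_w)$, and — using $0\le\psi_u\le 1$ so that $\psi_u^2\le\psi_u$ and $\sum_u\psi_u=1$ — the pure $\psi_u(v)^2$ terms can also be absorbed, while the accumulated $f$-difference factors reassemble (after reindexing over the bounded neighbourhoods and using that each $W_n$-edge is used boundedly often) into $C\cdot\mcD_n(f)$. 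Collecting constants, which depend only on the maximal degree $8$ and the bounded diameter of $\mathcal N_w$, gives $\mcD_{n+m}(\tilde f)\le C\big(\max_w\mcD_{n+m}(\psi_w)\big)\mcD_n(f)$.

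The main obstacle I anticipate is purely combinatorial rather than analytic: carefully organizing the double sum so that each term $\big(f(u)-f(u')\big)^2$ with $u\sn u'$ in $W_n$ is charged only a bounded number of times, and making sure the chain from an arbitrary $u\in\mathcal N_w\cup\mathcal N_{w'}$ back to a common reference vertex (say $w'$) has uniformly bounded length — this is exactly where the relaxed \textbf{(A3)} is relevant, since it must still guarantee that $\mathcal N_w$ consists of cells at bounded graph-distance from $w$. Once that uniform bound is in hand (it is immediate from the definition of $\mathcal N_{2,w}$), the rest is a standard Cauchy–Schwarz-and-sum argument.
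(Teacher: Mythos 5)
Your overall strategy coincides with the paper's: use $\sum_{u\in W_n}\psi_u\equiv 1$ to subtract a reference value of $f$ from each edge difference, invoke the support condition to restrict the sum to a bounded set of indices $u$ lying within bounded graph-distance of $w,w'$, apply Cauchy--Schwarz, and then charge the $\psi$-increments to $\max_{w}\mcD_{n+m}(\psi_w)$ and the $f$-oscillations over bounded neighbourhoods (the paper uses $\mcD_{n,\mathcal{N}_{5,w}}(f)$) to $\mcD_n(f)$, using bounded multiplicity. The only cosmetic difference is the choice of reference constant: the paper subtracts the average $a(v,v')=[f]_{S(v,v')}$ over $S(v,v')=\{u:\psi_u(v)+\psi_u(v')>0\}$, while you subtract $f(w')$; that is immaterial.

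However, the displayed inequality you propose after Cauchy--Schwarz is not a harmless ``variant'', and the claim that the pure $\psi_u(v)^2$ terms ``can also be absorbed'' is false; this is the one genuine gap. When you sum $\big(f(u)-f(w')\big)^2\psi_u(v)^2$ over all edges $v\stackrel{n+m}{\sim}v'$, the bounds $\psi_u^2\le\psi_u\le 1$, $\sum_u\psi_u=1$ only control this by (maximal degree)$\times$(number of level-$(n+m)$ edges meeting the support of $\psi_u$), which is of order $N^m$ per pair of neighbouring $n$-cells; for admissible $\psi$'s that are locally constant across an interface (so that $\mcD_{n+m}(\psi_u)$ is small while $\psi_u\asymp 1$ on $\asymp N^m$ cells of a neighbouring $n$-cell) the resulting contribution is of order $N^m\mcD_n(f)$, which is not bounded by $\big(\max_w\mcD_{n+m}(\psi_w)\big)\mcD_n(f)$. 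This would also ruin the intended application: in Lemma \ref{lemmaa2} and Proposition \ref{propa3} one needs precisely $\max_w\mcD_{n+m}(\psi_w)\le C\cdot R_m^{-1}\le C'(N/k^2)^m\ll N^m$. The fix is forced and is already in your hands: since $\sum_u\big(\psi_u(v)-\psi_u(v')\big)=0$, combine your two sums \emph{before} squaring, i.e. write $\tilde f(v)-\tilde f(v')=\sum_u\big(f(u)-f(w')\big)\big(\psi_u(v)-\psi_u(v')\big)$, and apply Cauchy--Schwarz to this single sum, so that every surviving term carries an increment of some $\psi_u$ and no pure $\psi_u(v)^2$ term appears. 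Summing over edges and reindexing in $u$ then gives $\mcD_{n+m}(\tilde f)\le C\big(\max_w\mcD_{n+m}(\psi_w)\big)\sum_{u\in W_n}\mcD_{n,\mathcal{N}_{5,u}}(f)\le C'\big(\max_w\mcD_{n+m}(\psi_w)\big)\mcD_n(f)$, which is exactly the paper's proof; your remaining points (bounded cardinality of the index set, chains of bounded length inside $\mathcal{N}_{l,w}$ for fixed small $l$, bounded overlap when reassembling $\mcD_n(f)$) are correct and are what the paper uses.
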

\begin{proof}
For any $v\stackrel{n+m}{\sim}v'$ in  $W_{n+m}$, we write $S(v,v')=\big\{w\in W_n:g_w(v)+g_w(v')>0\big\}$ and $a(v,v')=[f]_{S(v,v')}$. Note that $\#S(v,v')$ is uniformly bounded depending only on the $\USC$. Then,
\[\begin{aligned} \mcD_{n+m}(\tilde{f})&=\sum_{v\stackrel{n+m}{\sim}v'}\big(\tilde{f}(v)-\tilde{f}(v')\big)^2\\
&=\sum_{v\stackrel{n+m}{\sim}v'}\Big(\sum_{w\in S(v,v')}\big(f(w)-a(v,v')\big)\big(g_w(v)-g_w(v')\big)\Big)^2\\
&\leq C_1\cdot \sum_{v\stackrel{n+m}{\sim}v'}\sum_{w\in S(v,v')}\big(f(w)-a(v,v')\big)^2\big(g_w(v)-g_w(v')\big)^2\\
&=C_1\cdot \sum_{w\in W_n}\sum_{v\stackrel{n+m}{\sim}v':w\in S(v,v')}\big(f(w)-a(v,v')\big)^2\big(g_w(v)-g_w(v')\big)^2,
\end{aligned}\]
where the second equality is due to $\sum_{w\in W_n}g_w=1$, in the third and fourth lines $C_1=\max_{v\stackrel{n+m}{\sim}v'}\#S(v,v')$, and in the fourth line $\sum_{v\stackrel{n+m}{\sim}v':w\in S(v,v')}$ means summation over all (unordered) edges $v\stackrel{n+m}{\sim}v'$ such that $w\in S(v,v')$. To continue the estimate, we notice that
\[\big((f(w)-a(v,v')\big)^2\leq \max_{w'\in \mathcal{N}_{5,w}}\big(f(w)-f(w')\big)^2\leq 5\cdot\mcD_{n,\mathcal{N}_{5,w}}(f),\]
where $5$ appears since each $w'$ in $S(v,v')$ locates in $\mathcal N_{5,w}$.
Thus, we have
\[\begin{aligned}
\mcD_{n+m}(\tilde{f})&\leq 5C_1\cdot \sum_{w\in W_n}\mcD_{n,\mathcal{N}_{5,w}}(f)\sum_{v\stackrel{n+m}{\sim}v':w\in S(v,v')}\big(g_w(v)-g_w(v')\big)^2\\
&\leq 5C_1\cdot\sum_{w\in W_n}\mcD_{n,\mathcal{N}_{5,w}}(f)\big(\max_{w\in W_n}\mcD_{n+m}(g_w)\big).
\end{aligned}\]
The lemma follows immediately, noticing that $\sum_{w\in W_n}\mcD_{n,N_{5,w}}(f)\leq C_2\cdot \mcD_n(f)$ for some $C_2>0$.
\end{proof}

\begin{lemma}\label{lemmaa2}
Let $m,n\geq 1$. There exists a collection of non-negative functions $\{g_w\}_{w\in W_{n}}$ in  $l(W_{n+m})$ such that $\sum_{w\in W_{n}}g_w=1$, $g_w(v)=0\hbox{ for every } w\in W_{n},v\in \mathcal{N}_w^c\cdot W_{m}$, and
\[\mcD_{n+m}(g_w)\leq C\cdot R_m^{-1}\quad\hbox{ for every } w\in W_{n},\]
\noindent for some constant $C>0$.
\end{lemma}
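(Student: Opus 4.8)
\textbf{Proof proposal for Lemma \ref{lemmaa2}.}

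The plan is to build the partition of unity $\{\psi_w\}_{w\in W_n}$ by gluing together near-optimal functions for the resistances $R_{n+m}(w\cdot W_m,(W_n\setminus\mathcal N_w)\cdot W_m)$, one for each $w\in W_n$, and then to take a suitable max/min combination so that the resulting family sums to $1$, has the prescribed vanishing property, and has energy controlled by $R_m^{-1}$. First I would fix, for each $w\in W_n$, a function $\phi_w\in l(W_{n+m})$ with $\phi_w|_{w\cdot W_m}=1$, $\phi_w|_{(W_n\setminus\mathcal N_w)\cdot W_m}=0$, $0\le\phi_w\le1$, and $\mcD_{n+m}(\phi_w)\le 2R_{n+m}(w\cdot W_m,(W_n\setminus\mathcal N_w)\cdot W_m)^{-1}\le 2R_m^{-1}$, the last inequality being the definition of $R_m$ in Definition \ref{def31}(b) together with the observation that $(W_n\setminus\mathcal N_w)\cdot W_m$ contains $(W_{|w|}\setminus\mathcal N_w)\cdot W_m$ after relabelling by the word $w$ itself (since $w\in W_n$ and $n=|w|$). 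Truncating to $[0,1]$ does not increase energy, so this is harmless.

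Next I would normalize: set $\Phi=\sum_{w\in W_n}\phi_w$ and $\psi_w=\phi_w/\Phi$. Since each $v\in W_{n+m}$ lies in $v'\cdot W_m$ for a unique $v'\in W_n$, and $\phi_{v'}(v)=1$ there, we have $\Phi\ge 1$ everywhere, so the division is legitimate, $\sum_w\psi_w=1$, each $\psi_w\ge0$, and $\psi_w$ vanishes wherever $\phi_w$ does, i.e. on $(W_n\setminus\mathcal N_w)\cdot W_m$; these are the three structural requirements. For the energy estimate I would note that at most a bounded number (depending only on $k$, via the finite multiplicity of the neighbour relation guaranteed by \textbf{(A3)}) of the $\phi_w$ are nonzero on any pair $v\sn v'$, so $\Phi$ is bounded above by a constant $C_0$, and a standard computation with the quotient rule for discrete differences — writing $\psi_w(v)-\psi_w(v')$ in terms of $\phi_w(v)-\phi_w(v')$ and $\Phi(v)-\Phi(v')$, and bounding $|\Phi(v)-\Phi(v')|^2\le C_0\sum_{w'}|\phi_{w'}(v)-\phi_{w'}(v')|^2$ — yields $\mcD_{n+m}(\psi_w)\le C\sum_{w'\in \mathcal N_{c,w}}\mcD_{n+m}(\phi_{w'})$ for some fixed $c$ and the sum ranging over a bounded neighbourhood of $w$. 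Since each term is $\le 2R_m^{-1}$ and the neighbourhood has bounded size, this gives $\mcD_{n+m}(\psi_w)\le C\cdot R_m^{-1}$ as required.

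The main subtlety — and where I would spend the most care — is the quotient-rule energy bound: one must check that dividing by $\Phi$ inflates the energy only by a constant, which uses both the lower bound $\Phi\ge1$ and the bounded overlap of the supports, the latter being exactly where the uniform finiteness of neighbours from \textbf{(A3)} enters. A secondary point worth stating cleanly is that the cruder vanishing requirement ``$\psi_w(v)=0$ for $v\in(W_n\setminus\mathcal N_w)\cdot W_m$'' is genuinely weaker than what $\phi_w$ satisfies, so no gluing across cells is actually needed here; the lemma is really just ``normalize a family of optimal cutoffs,'' and the only real content is the constant in the energy bound. I do not expect to need the full strength of the extension algorithm of Section \ref{sec4} for this particular lemma.
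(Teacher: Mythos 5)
Your proposal is correct and follows essentially the same route as the paper: take the optimal (or near-optimal) cutoffs $\phi_w$ realizing $R_{n+m}\big(w\cdot W_m,(W_n\setminus\mathcal N_w)\cdot W_m\big)\geq R_m$, normalize by $\Phi=\sum_{w}\phi_w\geq 1$, and bound $\mcD_{n+m}(\psi_w)$ by a discrete product/quotient rule plus localization of the energy sum to a bounded neighbourhood of $w$ (the paper does this by restricting to $\mathcal N_{3,w}$ and using $|1/\Phi(v)-1/\Phi(v')|\leq|\Phi(v)-\Phi(v')|$, which matches your argument up to cosmetics). The only small inaccuracy is attributing the bounded overlap of the supports to \textbf{(A3)}; it really comes from the elementary fact that a cell has a uniformly bounded number of same-level neighbours, but this does not affect the proof.
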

\begin{proof}
For each $w\in W_{n}$, let $\tilde{g}_w$ be the unique function supported on $\mathcal{N}_w\cdot W_{m}$ such that
\[\mcD_{n+m}(\tilde{g}_w)=R_{n+m}^{-1}\big(w\cdot W_{m},\mathcal{N}_w^c\cdot W_m\big)\leq R_m^{-1},\quad \tilde{g}_w|_{w\cdot W_m}=1.\]
Let $\tilde{g}=\sum_{w\in W_{n}}\tilde{g}_w$ and $g_w=\frac{\tilde{g}_w}{\tilde{g}}$, noticing that $\tilde{g}\geq 1$ (by the Markov property and by Remark 1 after Definition \ref{def31}, we see that $0\leq \tilde g_w\leq 1$). We immediately have $\sum_{w\in W_{n}}g_w=1$ and $g_w(v)=0\hbox{ for every } w\in W_{n},v\in \mathcal{N}_w^c\cdot W_{m}$. It remains to estimate the energy of each $g_w$.
	
First, we notice that $\tilde{g}\geq1$, so $\big|\frac{1}{\tilde{g}(v)}-\frac{1}{\tilde{g}(v')}\big|\leq \big|\tilde{g}(v)-\tilde{g}(v')\big|$. Thus, we have
\[\begin{aligned}
\mcD_{n+m,\mathcal{N}_{3,w}\cdot W_m}(1/\tilde{g})
\leq \mcD_{n+m,\mathcal{N}_{3,w}\cdot W_m}(\tilde{g})
&=\mcD_{n+m,\mathcal{N}_{3,w}\cdot W_m}\big(\sum_{w':\mathcal{N}_{2,w'}\cap \mathcal{N}_{3,w}\neq \emptyset}\tilde{g}_{w'}\big)\\
&\leq C_1\cdot \max_{w':\mathcal N_{2,w'}\cap \mathcal{N}_{3,w}\neq \emptyset}\mcD_{n+m}(\tilde{g}_{w'})
\leq C_1\cdot R_m^{-1},
\end{aligned}\]
for some $C_1>0$,  where the equality is because $\tilde g(v)=\sum_{w':\mathcal{N}_{2,w'}\cap \mathcal{N}_{3,w}\neq \emptyset}\tilde{g}_{w'}(v)\hbox{ for every } v\in \mathcal{N}_{3,w}\cdot W_m$.
As $g_w$ is supported on $\mathcal N_w\cdot W_m=\mathcal{N}_{2,w}\cdot W_m$, $|g_w(v)-g_w(v')|>0$ only if $\{v,v'\}\subset N_{3,w}\cdot W_m$. Thus we  have
\[\begin{aligned}
\sqrt{\mcD_{n+m}(g_w)}&=\sqrt{\mcD_{n+m,\mathcal{N}_{3,w}\cdot W_m}(\tilde{g}_w/\tilde{g})}\\
&\leq \sqrt{\mcD_{n+m,\mathcal{N}_{3,w}\cdot W_m}(\tilde{g}_w)}+\sqrt{\mcD_{n+m,\mathcal{N}_{3,w}\cdot W_m}(1/\tilde{g})}\leq (C^{1/2}_1+1)\cdot R_m^{-1/2},
\end{aligned}\]
where the first inequality is due to $\|\tilde{g}_w\|_{l^\infty(W_{n+m})}\leq 1$ and $\|1/\tilde{g}\|_{l^\infty(W_{n+m})}\leq 1$.
\end{proof}

\begin{proposition}\label{propa3}
	There is a constant $C>0$ such that
	\[\lambda_nN^mR_m\leq C\cdot \lambda_{n+m+2}\qquad \hbox{for every } n,m\geq 1.\]
\end{proposition}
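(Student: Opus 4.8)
The plan is to prove the slightly weakened inequality $\lambda_n N^m R_m \le C \cdot \lambda_{n+m+2}$, the extra "$+2$" coming from the fact that in our setting the neighbourhoods $\mathcal{N}_w$ are only defined through a two-step chain rather than via direct intersection, so a harmless shift of scale is needed to absorb the passage between $W_n$-graph distances and $W_{n+2}$-graph distances. The strategy is the standard one: take a near-optimal function $f \in l(W_n)$ for $\lambda_n$ (i.e.\ $\mcD_n(f) = 1$ and $\sum_{w}(f(w) - [f]_{W_n})^2 \ge \tfrac12 \lambda_n$), and transplant it into $l(W_{n+m})$ via a partition of unity supported on the neighbourhoods $\mathcal{N}_w$, so as to compare its energy with $\lambda_{n+m}$. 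The two key inputs are already in place: Lemma \ref{lemmaa2} produces a family $\{\psi_w\}_{w \in W_n}$ of nonnegative functions in $l(W_{n+m})$ forming a partition of unity, with $\psi_w$ supported on $\mathcal N_w \cdot W_m$ and $\mcD_{n+m}(\psi_w) \le C\cdot R_m^{-1}$ uniformly in $w$; and Lemma \ref{lemmaa1} shows that the transplanted function $\tilde f = \sum_{w} f(w)\psi_w$ satisfies $\mcD_{n+m}(\tilde f) \le C\cdot(\max_w \mcD_{n+m}(\psi_w))\,\mcD_n(f) \le C' R_m^{-1}$.

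First I would record the variance lower bound for $\tilde f$. Since $\sum_w \psi_w = 1$, on each cell $w \cdot W_m$ the value of $\tilde f$ is an average $\sum_{w' \in \mathcal N_w} f(w')\psi_{w'}(\cdot)$ of the values $f(w')$ over the few neighbours $w'$ of $w$; in particular $P_n(\tilde f)(w) = f(w)$ is \emph{not} quite true, but $\big|\,\text{(average of }\tilde f\text{ over }w\cdot W_m) - f(w)\,\big| \le \max_{w' \in \mathcal N_w}|f(w)-f(w')| \le C\sqrt{\mcD_{n,\mathcal N_w}(f)}$, which after summing in $w$ is controlled by $\mcD_n(f) = 1$. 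Hence the $L^2$-oscillation of $\tilde f$ on $W_{n+m}$ differs from that of $f$ on $W_n$ by a bounded amount: combining with $\sum_w (f(w) - [f]_{W_n})^2 \ge \tfrac12\lambda_n$ and using that each $w\cdot W_m$ carries $N^m$ points of $W_{n+m}$ all with value close to $f(w)$, one gets $\sum_{v \in W_{n+m}}(\tilde f(v) - [\tilde f]_{W_{n+m}})^2 \ge c\, N^m \lambda_n$ for large $\lambda_n$ (and the inequality is trivial when $\lambda_n$ is bounded, since everything is comparable up to constants there — one should note $\lambda_n$ is bounded below by a positive constant anyway). At this point the definition of $\lambda_{n+m}$ applied to $\tilde f / \sqrt{\mcD_{n+m}(\tilde f)}$ gives
\[
\lambda_{n+m} \ge \frac{\sum_v (\tilde f(v) - [\tilde f]_{W_{n+m}})^2}{\mcD_{n+m}(\tilde f)} \ge \frac{c\, N^m \lambda_n}{C' R_m^{-1}} = c''\, N^m R_m \lambda_n,
\]
and replacing $n+m$ by $n+m+2$ (using $\lambda_{n+m} \asymp \lambda_{n+m+2}$, which follows from \eqref{eqn31} with step $1$ applied twice, or is available directly once one re-examines the chain-length bookkeeping) yields the claim.

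The main obstacle I anticipate is the bookkeeping around the oscillation estimate for $\tilde f$: one must be careful that the "few neighbours" argument ($\#S(v,v')$ and $\#\mathcal N_{5,w}$ uniformly bounded) genuinely holds for a $\USC$ with its possibly irrational intersection lengths, but this is exactly what property \textbf{(A3)} (in its relaxed form) was arranged to guarantee, so it is a matter of citing the geometry rather than new work. The only genuinely delicate point is handling the regime where $\lambda_n$ is not yet large — there the subtraction $[f]_{W_n}$ versus averages of $\tilde f$ could swamp the main term — but since $\lambda_n \ge c > 0$ for all $n$ and all the Poincaré constants in play are mutually comparable up to multiplicative constants in any bounded range, one disposes of that case by absorbing it into the constant $C$. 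I would therefore present the argument assuming $\lambda_n$ large (equivalently $n$ large), remark that small $n$ is trivial, and conclude.
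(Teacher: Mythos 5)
There is a genuine gap, and it sits exactly where you flagged the "delicate point". In your scheme the transplanted function $\tilde f=\sum_{w\in W_n}f(w)\psi_w$ only has cell averages \emph{approximately} equal to $f(w)$, with an additive error whose square sum is of order $\mcD_n(f)=1$; so your variance lower bound reads $\sum_v(\tilde f(v)-[\tilde f]_{W_{n+m}})^2\geq N^m\big(c\,\lambda_n-C\big)$, and it is only useful when $\lambda_n$ is large compared to the fixed constant $C$. You propose to dispose of the remaining regime "since $\lambda_n\geq c>0$ and all constants are comparable in a bounded range", but this does not work: for a fixed small $n$ the index $m$ is still arbitrary, and the inequality to be proved there, $\lambda_{n+m}\geq C^{-1}\lambda_n N^m R_m$, involves the unbounded quantity $N^mR_m$ (which grows at least like $k^{2m}$ by (\ref{eqn32})), so nothing can be "absorbed into the constant". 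Repairing it would require non-circular auxiliary comparisons (e.g.\ that $\lambda_j$ and $N^jR_j$ change by at most bounded factors under bounded shifts of $j$, plus a proof that $\lambda_n\to\infty$, which is not yet available at this point of the appendix); none of this is in your proposal. The same circularity problem affects your final step: $\lambda_{n+m}\asymp\lambda_{n+m+2}$ cannot be quoted from (\ref{eqn31}), since (\ref{eqn31}) is exactly what Proposition \ref{propa3} is feeding into, although here an elementary fix exists (extend any competitor constantly to grandchildren, so $\lambda_{j+2}\geq c\,\lambda_j$).

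The paper avoids all of this with one extra device, which is also the real reason for the "$+2$" (not the two-step definition of $\mathcal N_w$): first extend $f$ to $\tilde f\in l(W_{n+2})$ by $\tilde f(w\cdot\tau)=f(w)$, which costs only a factor $3k^2$ in energy, and \emph{then} apply the partition of unity of Lemma \ref{lemmaa2} at level $n+2$. If $I=W_2\setminus\big(\partial W_2\cup\{\tau:\tau\stackrel{2}{\sim}\partial W_2\}\big)$, which is non-empty by \textbf{(A4)}, then for $v\in w\cdot I\cdot W_m$ every $w'\in W_{n+2}$ with $\psi_{w'}(v)>0$ shares the first $n$ letters with $w$, so $\tilde{\tilde f}(v)=f(w)$ \emph{exactly}. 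This exactness gives $\sum_v(\tilde{\tilde f}(v)-[\tilde{\tilde f}])^2\geq(\# I)N^m\lambda_n$ with no error term, no large-$\lambda_n$ assumption, and no case analysis, while Lemmas \ref{lemmaa1} and \ref{lemmaa2} bound $\mcD_{n+m+2}(\tilde{\tilde f})\leq C R_m^{-1}$ as in your outline. So your two key inputs are the right ones, but the missing idea is this intermediate constant-on-$W_2$-blocks extension; without it the argument as written does not close.
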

\begin{proof}
	Let $f\in l(W_n)$ so that $\mcD_n(f)=1$ and $\sum_{w\in W_n}\big(f(w)-[f]_{W_n}\big)^2=\lambda_n$.
	Let $\tilde{f}\in l(W_{n+2})$ be defined as $\tilde{f}(w\cdot \tau)=f(w)\hbox{ for every } w\in W_n,\tau\in W_2$. Clearly,
	\[\mcD_{n+2}(\tilde{f})\leq 3k^2\mcD_n(f)=3k^2,\]
	where $3$ comes from the fact that for any pair $w\sn w'$ in $W_n$, each $(n+2)$-subcell in $\Psi_wK$ is neighbouring to at most $3$ $(n+2)$-subcells in $\Psi_{w'}K$.  Next, we let $\{g_w\}_{w\in W_{n+2}}$ in $l(W_{n+m+2})$ be defined as in Lemma \ref{lemmaa2}, and define $\tilde{\tilde{f}}=\sum_{w\in W_{n+2}}\tilde{f}(w)g_w$. By Lemma \ref{lemmaa1} and \ref{lemmaa2}, we know there is a $C_1>0$ such that
	\[\mcD_{n+m+2}(\tilde{\tilde{f}})\leq C_1\cdot R_m^{-1}\mcD_{n+2}(\tilde{f})\leq 3k^2C_1\cdot R_m^{-1}.\]
	
	On the other hand, let $I=W_2\setminus \big(\partial W_2\cup\{w\in W_2:w\stackrel{2}{\sim}\partial W_2\}\big)$. We have $I\neq \emptyset$, and
	\[\tilde{\tilde{f}}(w\cdot \tau)=f(w)\qquad\hbox{for every } w\in W_n,\tau\in I\cdot W_m\]
	 by the definition of $\{g_w\}_{w\in W_{n+2}}$.
	As a consequence, we have,
	\[\sum_{v\in W_{n+m+2}}\big(\tilde{\tilde{f}}(v)-[\tilde{\tilde{f}}]_{W_{n+m+2}}\big)^2\geq (\#I)\cdot N^m\sum_{w\in W_n}\big(f(w)-[f]_{W_n}\big)^2=(\#I)\cdot\lambda_nN^m.\]
	Thus, $\lambda_{n+m+2}\geq \Big(\sum_{v\in W_{n+m+2}}\big(\tilde{\tilde{f}}(v)-[\tilde{\tilde{f}}]_{W_{n+m+2}}\big)^2\Big)/\mcD_{n+m+2}(\tilde{\tilde{f}})\geq C_2\cdot \lambda_nN^mR_m$ for some $C_2>0$.
\end{proof}

\begin{proof}[Proof of Proposition \ref{prop32}.]
By Lemma \ref{lemmaA1}, all the constants $\lambda_n, R_n$ and $\sigma_n$, $n\geq 1$ are positive and finite, and the estimate (\ref{eqn32}) holds. The left hand side of (\ref{eqn31}), $\lambda_nN^mR_m\leq C\cdot\lambda_{n+m}\hbox{ for every } n,m\geq1$, will follow immediately from Proposition \ref{propa3} and the right hand side of (\ref{eqn31}), $\lambda_{n+m}\leq C\cdot\lambda_n\sigma_m\hbox{ for every } n,m\geq 1$. So it remains to show the right hand side of (\ref{eqn31}).\vspace{0.2cm}
	
Let $f\in l(W_{n+m})$ so that $\mcD_{n+m}(f)=1$ and $\sum_{v\in W_{n+m}}\big(f(v)-[f]_{W_{n+m}}\big)^2=\lambda_{n+m}$. Let $f'\in l(W_n)$ be defined as $f'(w)=[f]_{w\cdot W_m}\hbox{ for every } w\in W_n$. Then, by  Lemma \ref{lemma36}, we know that
\[\mcD_n(f')\leq C_1\cdot N^{-m}\sigma_m\mcD_{n+m}(f)=C_1\cdot N^{-m}\sigma_m,\]
for some constant $C_1>0$. On the other hand, we have
\[\begin{aligned}
\lambda_{n+m}&=\sum_{v\in W_{n+m}}\big(f(v)-[f]_{W_{n+m}}\big)^2\\
&=\sum_{w\in W_n}\sum_{v\in w\cdot W_m}\big(f(v)-f'(w)\big)^2+N^m\sum_{w\in W_n}\big(f'(w)-[f']_{W_n}\big)^2\\
&\leq \lambda_m\mcD_{n+m}(f)+N^m\lambda_n\mcD_n(f')\leq \lambda_m+C_1\cdot\lambda_n\sigma_m.
\end{aligned}\]
In addition, by Proposition \ref{propa3} and (\ref{eqn32}), we know that $\lambda_{n+m}\geq 2\cdot\lambda_m\hbox{ for every } n\geq n'$ for some $n'\geq 1$. Thus $\lambda_{n+m}\leq 2C_1\lambda_n\sigma_m$ holds for $m\geq 1,n\geq n'$. For $m\geq 1,1\leq n<n'$, we have
\[\begin{aligned}
\lambda_{n+m}&\leq C_2\lambda_{n'+2+m}(N^{n'-n}R_{n'-n})^{-1}\\
&\leq 2C_1C_2\lambda_{n'+2}\sigma_{m}(N^{n'-n}R_{n'-n})^{-1}\leq C_3\lambda_n\sigma_m,
\end{aligned}\]
for some constant $C_2>0$, where the first inequality follows from Proposition \ref{propa3}, and $C_3=2C_1C_2\cdot\max\limits_{1\leq n<n'}\lambda_{n'+2}(N^{n'-n}R_{n'-n})^{-1}\lambda_n^{-1}$, which is positive and finite by Lemma \ref{lemmaA1}. Hence the right hand side of (\ref{eqn31}) holds.
\end{proof}

\section*{Acknowledgments}
We are grateful to Professor Robert S. Strichartz for his continued support and encouragement for us to work on this problem. We would also like to thank Professor T. Kumagai, Professor J. Kigami and Professor N. Kajino for their valuable comments and pointing out some gaps on a previous version of this manuscript.

\subsection*{Conflicts of interest} The Authors declare that there is no conflict of interest.

\subsection*{Data availability statement.} Data sharing not applicable to this article as no datasets were generated or analysed during the current study.

\bibliographystyle{amsplain}

\end{document}